\newtheorem{theorem}{Theorem}[section]
\newtheorem{lemma}[theorem]{Lemma}
\newtheorem{corollary}[theorem]{Corollary}
\newtheorem{definition}[theorem]{Definition}
\newtheorem{example}[theorem]{Example}
\theoremstyle{remark}
\newcommand{\prob}{\mathbb{P}}
\newcommand{\expect}{\mathbb{E}}
\newcommand{\N}{{\mathbb N}}
\newcommand{\Z}{{\mathbb Z}}
\renewcommand{\P}{\mathbb{P}}
\newcommand{\E}{\mathbb{E}}
\newcommand{\cJ}{\mathcal{J}}
\newcommand{\R}{\mathbb{R}}
\DeclareMathOperator{\Leb}{Leb}
\newcommand{\SG}{\mathcal{S}}
\newcommand{\zz}{\bm{z}}
\newcommand{\pu}{pos_U}
\newcommand{\ctu}{ct_U}
\newcommand{\pd}{pos_D}
\newcommand{\ctd}{ct_D}
\newcommand{\pr}{pos_R}
\newcommand{\ctr}{ct_R}
\newcommand{\pl}{pos_L}
\newcommand{\ctl}{ct_L}
\newcommand{\sq}{Sq}
\newcommand{\asq}{ASq}
\newcommand{\asqnk}{\asq(n,k)}
\newcommand{\ext}{\text{ext}}
\newcommand{\insrt}{\textbf{insert}}
\newcommand{\delete}{\textbf{delete}}
\newcommand{\rev}{\textbf{reverse}}
\newcommand{\irr}{\mathcal{R}_{irr}}
\newcommand{\regperm}{\mathcal{R}}
\newcommand{\avn}{A\!v_n}
\newcommand{\avnk}{ASq(\avn(321),k)}
\newcounter{indice}
\newcommand{\permutation}[1]{
	\setcounter{indice}{0};
	\foreach \i in {#1}
	\addtocounter{indice}{1};
	
	\addtocounter{indice}{1}
	\draw [help lines] (1,1) grid (\theindice,\theindice);
	
	\setcounter{indice}{1};
	
	\foreach \i in { #1 } {
		\draw (\theindice+.5,\i+.5) [fill] circle (.2);
		\addtocounter{indice}{1};
	}
	\addtocounter{indice}{-1};
	
}
\title[Almost square permutations are typically square]{Almost square permutations are typically square}
\author[J. Borga]{Jacopo Borga}
\author[E. Duchi]{Enrica Duchi}
\author[E. Slivken]{Erik Slivken}
\address[J. Borga]{Institut fur Mathematik, Universitat Zurich, Winterthurerstr. 190, CH-8057 Zurich, Switzerland}\email{jacopo.borga@math.uzh.ch}
\address[E. Duchi]{Universit\'e de Paris Diderot, IRIF, Batiment Sophie Germain, 75013, Paris, France}
\email{duchi@irif.fr}
\address[E. Slivken]{Universit\'{e} de Paris Diderot, LPSM, Batiment Sophie Germain, 75013, Paris, France}\email{eslivken@lpsm.paris}
\keywords{Permutations, scaling limits, permutons, asymptotic enumeration methods}
\subjclass[2010]{60C05,05A05,05A16}
\begin{document}

\maketitle

\begin{abstract}
A record in a permutation is a maximum or a minimum, from the left or from the right. The entries of a permutation can be partitioned into two types: the ones that are records are called external points, the others are called internal points. Permutations without internal points have been studied under the name of square permutations. Here, we explore permutations with a fixed number of internals points, called almost square permutations. Unlike with square permutations, a precise enumeration for the total number of almost square permutations of size $n+k$ with exactly $k$ internal points is not known.  However, using a probabilistic approach, we are able to determine the asymptotic enumeration.  This allows us to describe the permuton limit of almost square permutations with $k$ internal points, both when $k$ is fixed and when $k$ tends to infinity along a negligible sequence with respect to the size of the permutation. Finally, we show that our techniques are quite general by studying the set of $321$-avoiding permutations of size $n$ with exactly $k$ additional internal points ($k$ fixed). In this case we obtain an interesting asymptotic enumeration in terms of the Brownian excursion area. As a consequence, we show that the points of a uniform permutation in this set concentrate on the diagonal and the fluctuations of these points converge in distribution to a biased Brownian excursion.
\end{abstract}


\section{Introduction}

We look at permutations as diagrams, that is, if $n$ denotes the size of a permutation~$\sigma$, we identify $\sigma$ with the set of points $\{(i,\sigma(i))\}_{i=1}^n$. The points of a permutation can be divided into two types, internal and external.  The external points are the records of the permutation, either maximum or minimum, from the left or from the right.  The internal points are the points that are not external.  Square permutations are permutations where every point is external.  Almost square permutations are permutations with some fixed number of internal points.  We use the notation $\sq(n)$ to denote the set of square permutations of size $n$ and $\asqnk$ to denote the set of almost square permutations of size $n+k$ with exactly $n$ external points and $k$ internal points.  

Square permutations were first studied in \cite{mansour_square}, and later in \cite{duchi_square1} and \cite{ALBERT2011715}, where several approaches were used to find their generating function and to derive from it an explicit expression for $|\sq(n)|$, specifically
$$|\sq(n)|=2(n+2)4^{n-3}-4(2n-5)\binom{2n-6}{n-3}.$$
More recently in \cite{duchi_square2} the second author of the present paper devised an enumerative approach through generating trees which highlights a fast sampling procedure for uniform random elements in $\sq(n)$. Finally, a probabilistic exploration in \cite{borga2019square} by the first and the third author of the present paper, found many interesting limiting objects for uniform random permutations in $\sq(n)$.  

In \cite{ALBERT2011715} square permutations were referred to as \emph{convex permutations} and were described by pattern-avoidance. In particular, square permutations are permutations that avoid the sixteen permutations of size $5$ that have an internal point.  

We recall the definition of pattern avoidance for permutations. Let $\SG_n$ denote the set of permutations of size $n$.  For $\pi\in \SG_n$ and $\omega \in \SG_k$ we say that $\pi$ contains an occurrence of $\omega$ if there exists a subsequence $i_1 < \ldots < i_k$ such that $(\pi(i_1), \ldots, \pi(i_k))$ has the same relative order as $\omega.$  We say that $\pi$ avoids the pattern $\omega$ if it contains no occurrences of $\omega$.  We let $\avn(\omega)$ denote the set of permutations of size $n$ that avoid $\omega$ and for a collection of patterns $\mathcal{B}$ we let $\avn(\mathcal{B})$ denote the set of permutations of size $n$ that avoid every pattern in $\mathcal{B}$.  See \cite{bona,kit,vatter2014permutation} for a proper introduction to the wide range of topics related to patterns in permutations.

Almost square permutations were studied for the first time in \cite{disanto2011permutations}.  It was shown that, for fixed $k>0$, the generating function with respect to the size for $\asqnk$ is algebraic of degree $2$, and this generating function was explicitly computed for $k=1,2,3$ (see Theorem~\ref{squareInt123}). However, computations become intractable for $k>3$.  

Permutations that almost avoid a pattern were considered in \cite{brignall2009almost, griffiths2011almost} from an enumerative point of view.  A permutation is said to $k$-almost avoid a pattern (or set of patterns) if $k$ or fewer points can be deleted so that the resulting permutation avoids the pattern (or set of patterns). The notion of almost avoiding permutations has a deep relation with sorting algorithms (we refer to the introduction of \cite{brignall2009almost} for more details) which are widely studied in computer science and combinatorics.  The notion of almost avoidance used in \cite{brignall2009almost, griffiths2011almost} differs slightly from our definition of almost square permutations.  A permutation is $k$-almost square if removing exactly $k$ \emph{internal} points one obtains a square permutation. On the contrary, a permutation is a $k$-almost avoiding permutation if, removing $k$ or fewer points, \emph{either internal or external}, one obtains a permutation that belongs to the appropriate class.

We finally point out that problems similar to the ones mentioned above, this is, involving the removal of some specific atoms from a discrete structure, have been extensively considered in graph theory. Classes of graphs, defined as follows, have been studied: for a graph class $\mathcal{G}$ and an integer $k$, define $\mathcal A_k(\mathcal G)$ as the class of all graphs in which the removal of $k$ well-chosen vertices leads to $\mathcal G$. We refer to the introduction of \cite{leivaditis2019minor} for a nice overview of the literature on graph classes of the form $\mathcal A_k(\mathcal G)$ and related problems. We mention that probably the most famous instance of this kind of problems is the study of $k$-apex graphs, i.e.\ graphs that can be made planar by the removal of exactly $k$ vertices.

\bigskip

The first main result of this paper uses the approach in \cite{borga2019square,duchi_square2} to give the asymptotic enumeration of $\asqnk$.  

We write $a_n \sim b_n$ if $\lim_{n\to \infty} a_n/b_n = 1$, and $a_n = o(b_n)$ if $\lim_{n\to \infty} a_n/b_n = 0.$    

\begin{theorem}\label{approx_size}
	For $k=o(\sqrt n)$, as $n\to \infty,$
	
	\begin{equation}\label{approx_size_eq}
	|\asqnk| \sim \frac{k!2^{k+1}n^{2k+1}4^{n-3}}{(2k+1)!}\sim \frac{k!2^{k}n^{2k}}{(2k+1)!}|\sq(n)|.	
	\end{equation}
	
\end{theorem}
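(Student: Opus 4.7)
The plan is to count $|\asqnk|$ via the \emph{skeleton map} $\Pi\colon \asqnk \to \sq(n)$ that deletes the $k$ internal points of $\sigma$ and relabels. Writing $|\asqnk| = \sum_{\tau \in \sq(n)} |\Pi^{-1}(\tau)|$ reduces the problem to estimating the fiber size and averaging over a uniform random $\tau \in \sq(n)$.

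For fixed $\tau \in \sq(n)$, inserting a point strictly inside the envelope of $\tau$ preserves all records of $\tau$ and makes the new point internal, so the envelope (and hence its interior region) is left invariant by the insertion. Performing the $k$ insertions one at a time in the current grid of size $(n+j)\times(n+j)$, the number of admissible positions for the $(j{+}1)$-th insertion equals $A(\tau)\bigl(1 + O(j/n)\bigr)$, where $A(\tau)$ denotes the number of lattice points in the interior of $\tau$'s envelope in the original $n \times n$ grid. Multiplying over $j = 0, \dots, k-1$ and dividing by $k!$ to forget the insertion order yields
\begin{equation*}
|\Pi^{-1}(\tau)| \;=\; \frac{A(\tau)^k}{k!}\bigl(1 + O(k^2/n)\bigr)
\end{equation*}
uniformly in $\tau$; this error is $o(1)$ precisely when $k = o(\sqrt n)$, matching the hypothesis of the theorem.

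Summing yields $|\asqnk| \sim \frac{|\sq(n)|}{k!}\,\E[A(\tau)^k]$, where the expectation is over uniform $\tau \in \sq(n)$. The envelope scaling-limit results of \cite{borga2019square} should imply that the rescaled area $A(\tau)/n^2$ converges to a random variable $\mathcal A$ with $k$-th moments
\begin{equation*}
\E[\mathcal A^k] \;=\; 2^k\!\int_0^1 u^k (1-u)^k \, du \;=\; \frac{2^k (k!)^2}{(2k+1)!},
\end{equation*}
so $\mathcal A \stackrel{d}{=} 2U(1-U)$ for $U \sim \mathrm{Unif}[0,1]$ (one expects $U$ to be the rescaled position of the global maximum of $\tau$). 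Substituting,
\begin{equation*}
|\asqnk| \;\sim\; \frac{|\sq(n)|}{k!} \cdot n^{2k} \cdot \frac{2^k(k!)^2}{(2k+1)!} \;=\; \frac{k!\,2^k\,n^{2k}}{(2k+1)!}\,|\sq(n)|,
\end{equation*}
which is the second form of \eqref{approx_size_eq}; the first form follows from the asymptotic $|\sq(n)| \sim 2n \cdot 4^{n-3}$ implicit in the explicit formula of \cite{mansour_square}.

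The main obstacle is twofold: (i) identifying the scaling limit $\mathcal A$ of $A(\tau)/n^2$ and computing its moments explicitly, which requires extracting the joint scaling limit of the four record envelopes of a uniform $\tau \in \sq(n)$ from \cite{borga2019square}; and (ii) upgrading distributional convergence to moment convergence \emph{uniformly} in $k = o(\sqrt n)$, which is delicate because the ratio $\E[(A/n^2)^k]/\E[\mathcal A^k]$ rather than the absolute difference must tend to $1$, and $\E[\mathcal A^k]$ itself decays like $\frac{c}{2^k\sqrt k}$. The deterministic bound $A(\tau)/n^2 \in [0,1]$ gives automatic uniform integrability of every order (enough for any \emph{fixed} $k$); extending to $k(n) \to \infty$ will require quantitative control on the envelope fluctuations, accessible via the stochastic-process techniques already developed for uniform square permutations.
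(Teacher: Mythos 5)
Your overall strategy coincides with the paper's: decompose $|\asqnk|$ as a sum over skeletons $\tau\in\sq(n)$, estimate each fiber as $\frac{1}{k!}|I(\tau)|^k$ up to a multiplicative error, identify $|I(\tau)|/n^2$ with the rescaled interior area $\mathcal A = 2U(1-U)$, and evaluate moments. The moment computation, the final asymptotics, and the Beta-type integral are all exactly right.

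However, your key fiber estimate is stated with an incorrect uniformity. You claim that the number of admissible positions for the $(j{+}1)$-th insertion equals $A(\tau)\bigl(1+O(j/n)\bigr)$ \emph{uniformly in $\tau$}, hence $|\Pi^{-1}(\tau)| = \frac{A(\tau)^k}{k!}\bigl(1+O(k^2/n)\bigr)$. This is false for square permutations whose interior region is small or empty. The precise fact (Lemma~\ref{insert_growth} in the paper) is that each insertion increases the count of admissible positions by an \emph{additive} amount $O(n)$, so the relative error after $k$ steps is $O(nk/A(\tau))$, not $O(k^2/n)$. This is $O(k^2/n)$ only when $A(\tau)=\Theta(n^2)$; for instance the identity permutation is in $\sq(n)$ with $A(\tau)=0$, and many square permutations anchored near the corners have $A(\tau)=o(n^2)$. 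The paper avoids this by splitting $\sq(n)$ into a regular set $\regperm(z_0)$ (Petrov conditions hold, anchor bounded away from $1$ and $n$), where Lemma~\ref{regbound} guarantees $|I(\tau)|\geq\varepsilon n\delta_n$ and the multiplicative error is $\exp(3k^2/(\varepsilon\delta_n))=1+o(1)$ for an appropriate $\delta_n$ with $k=o(\sqrt{\delta_n})$, and an irregular remainder $\irr$ whose total contribution to $|\asqnk|$ is shown separately to be negligible via the crude bound $|\cJ(\sigma,k)|\leq (n+k)^{2k}$ together with the exponentially small count $|\irr|$. Your argument needs this regular/irregular dichotomy made explicit; as written, the ``uniform'' claim is the gap. (Minor: the parameter $U$ should be the rescaled position of the global minimum, the anchor $\tau^{-1}(1)$, not the maximum, though by symmetry the distribution is the same.)
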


When $k$ grows at least as fast as $\sqrt n$ the above result fails. Nevertheless, when $k=o(n)$, we can still obtain the following weaker asymptotic expansion that determines the behavior of the exponential growth.

\begin{theorem}\label{approx_size_2}
	For $k=o(n)$, as $n\to \infty,$
	
	\begin{equation*}
	\log\left(|\asq(n,k)|\right)=\log\left(\frac{k!}{(2k+1)!}2^{k+1}n^{2k+1}4^{n-3}\right)+o(k).
	\end{equation*}
	
\end{theorem}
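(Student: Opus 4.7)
My plan is to derive matching logarithmic upper and lower bounds on $|\asq(n,k)|$, each within $o(k)$ of $\log F(n,k)$, where $F(n,k) := \frac{k!2^{k+1}n^{2k+1}4^{n-3}}{(2k+1)!}$ is the leading term appearing in \eqref{approx_size_eq}. Since Theorem~\ref{approx_size} already delivers the stronger multiplicative $(1+o(1))$ estimate when $k = o(\sqrt n)$, the new content of the present statement lies in the regime $\sqrt n \lesssim k = o(n)$.

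Both bounds would reuse the setup behind the proof of Theorem~\ref{approx_size}, which expresses $|\asq(n,k)|$ through a construction that starts with a square permutation of size $n$ and successively inserts $k$ internal points; at step $i$ the expected number of valid positions is asymptotic to $n^2/(2i+1)$, with a relative error of order $1/n$. For $k = o(\sqrt n)$ the product of these errors is $(1 + O(1/n))^k = 1 + o(1)$, while for $k = o(n)$ it is $\exp(O(k/n)) = \exp(o(1))$, which is well within the $\exp(o(k))$ tolerance. The lower bound $|\asq(n,k)| \geq F(n,k) \cdot e^{-o(k)}$ follows from this construction once one verifies that the $k$ insertions jointly succeed on an event of high probability.

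The delicate half is the upper bound. The most straightforward estimate, via the deletion map $\asq(n,k) \to \sq(n)$ that removes the internal points, bounds each fiber by $\binom{n+k}{k}^2 k!$ and yields only $|\asq(n,k)| \leq F(n,k) \cdot e^{O(k)}$ after Stirling---an error too large by a factor of order $2^k$. To close this gap, one must sharpen the fiber estimate by exploiting the constraint that each internal point lies strictly inside the interior region determined by the external skeleton, rather than merely occupying a column and row in $\{1,\ldots,n+k\}$. The key input is a concentration estimate for the shape and area of this interior region for a uniformly random $\sigma \in \sq(n)$, which should follow from the permuton limit established in \cite{borga2019square}. The main obstacle is arranging this concentration to be sharp enough that the joint distribution of $k$ interior points behaves as if placed independently, up to multiplicative corrections of size $e^{o(k)}$, uniformly throughout the regime $k = o(n)$; this is precisely where one must avoid the factor-$2^k$ loss incurred by the naive fiber bound.
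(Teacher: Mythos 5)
Your overall strategy — reuse the insertion setup from Theorem~\ref{approx_size}, and obtain matching logarithmic bounds within $e^{o(k)}$ of $F(n,k)$ — is the right one, and it is indeed what the paper does. However, your error accounting is off in a way that matters, and your plan for the upper bound is more roundabout (and less concrete) than what is actually needed.

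First, the per-step error. You claim that at step $i$ the number of valid insertion positions carries a relative error $O(1/n)$, so that the cumulative factor is $(1+O(1/n))^k=\exp(O(k/n))=\exp(o(1))$. If that were correct, the full multiplicative asymptotic $|\asq(n,k)|\sim F(n,k)$ of Theorem~\ref{approx_size} would extend to all $k=o(n)$; but the paper explicitly notes that this fails once $k\gtrsim\sqrt n$. The actual situation is this: by Lemma~\ref{insert_growth}, inserting $i$ points can enlarge the interior region by as much as $2ni+i^2$, so the ratio $|I(\sigma^i)|/|I(\sigma)|$ is not $1+O(1/n)$ but $1+O(i/\delta_n)$ once one restricts to $\sigma\notin\irr$, for which $|I(\sigma)|\geq\varepsilon n\delta_n$. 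Multiplying over $i=1,\dots,k$ gives a cumulative factor $\exp(O(k^2/\delta_n))$ — and this is genuinely $\gg 1$ in the regime $\sqrt n\lesssim k=o(n)$ unless one is careful. The whole point of the proof is that $\delta_n$ is a free parameter constrained only by $\delta_n=o(n)$, $\delta_n\geq n^{.9}$, and $k=o(\delta_n)$ (see~\eqref{eq:cond_dn}), and the last constraint makes $k^2/\delta_n=o(k)$. That is exactly where the $o(k)$ on the right-hand side of the theorem comes from; it is not an $o(1)$.

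Second, the upper bound route. You propose to start from the crude deletion-fiber bound $\binom{n+k}{k}^2 k!$ and then claw back a $2^k$ via a concentration statement for the interior area. The paper never uses that fiber bound. It works directly with Lemma~\ref{insert_bound}: $|I(\sigma)|^k\leq|\cJ(\sigma,k)|\leq(|I(\sigma)|+2nk+k^2)^k$, combined with Lemma~\ref{regbound}'s deterministic two-sided estimate $|I(\sigma)|=2z_0(n-z_0)(1+O(n^{-.3}))$ for every regular $\sigma$ (no appeal to the permuton limit is needed; the Petrov-condition machinery gives this directly). Summing $|I(\sigma)|^k/k!$ over $\sigma$, plus the Riemann-sum identification $\int_0^1(t(1-t))^k\,dt=(k!)^2/(2k+1)!$, already produces $F(n,k)$ exactly; the $\exp(O(k^2/\delta_n))$ factor above and the negligible $\irr$ contribution from~\eqref{eq:irr_bound} are the only error terms to track. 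Your proposal correctly identifies that one must exploit the geometry of the interior region, but it stops at ``one must sharpen the fiber estimate'' and ``the main obstacle is arranging this concentration,'' which is precisely the nontrivial step that needs to be carried out; the paper's Lemmas~\ref{regbound} and~\ref{insert_bound} are what carry it out, and they were already in hand from the proof of Theorem~\ref{approx_size}. So what remains in the actual proof of the present theorem is only: insert the bound $\exp(3k^2/(\varepsilon\delta_n))$ from~\eqref{eq:boundbound} into~\eqref{eq:logbound_1}, take logarithms, and observe that $k^2/\delta_n=o(k)$.
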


In order to determine the above asymptotic enumerations, we use an understanding of the geometric structure of a typical square permutation. Specifically, we use some previous results (established in \cite{borga2019square}) about the precise description of the typical shape of a large square permutation and then we find bounds on the different possible ways of adding internal points. These two results lead to the desired asymptotic enumeration, and also give the description of the typical shape of a large almost square permutation.

For the latter, we utilize the language of \emph{permutons} \cite{MR2995721}. A permuton is a probability measure on the unit square with uniform marginals. Every permutation $\sigma$ can be associated with the permuton $\mu_{\sigma}$ representing a scaled version of its diagram (see Section \ref{sect:permutons} for a precise definition). Permuton limits have been widely studied in recent years, see for instance \cite{bassino2017universal,borga2018localsubclose,permuton} (we refer to our previous article \cite{borga2019square} for a detailed description of the literature on permutons).

Given $z\in(0,1)$ we denote with $\mu^{z}$ the permuton corresponding to a rectangle in $[0,1]^2$ with corners at $(z,0), (0,z),(1-z,1)$ and $(1,1-z)$ (for a rigorous construction we refer to Section \ref{sect:muz}). In \cite{borga2019square} it was shown that the permuton limit of a uniform random square permutation is given by the random permuton $\mu^{\bm z}$, where $\bm{z}$ is chosen uniformly in the interval $(0,1)$.  Permutations in $\asqnk$ can be constructed starting from a permutation in $\sq(n)$ and adding internal points (shifting points appropriately).  Intuitively, this suggests that the permuton limit of $\asqnk$ is biased toward rectangles with larger area.  This is confirmed by the following result.

\begin{theorem}\label{fixedk_thm}
	Fix $k>0$.  Let $\zz^{(k)}$ denote the random variable in $(0,1)$ with density
	$$f_{\zz^{(k)}}(t) = (2k+1){2k \choose k} (t(1-t))^k,$$
	i.e., $\zz^{(k)}$ is beta distributed with parameters $(k+1,k+1)$.	
	If $\bm\sigma_n$ is uniform in $\asqnk$, then as $n\to \infty,$
	$$\mu_{\bm{\sigma}_n} \stackrel{d}{\longrightarrow} \mu^{\zz^{(k)}}.$$
\end{theorem}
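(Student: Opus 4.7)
My plan is to factor a uniform sample $\bm\sigma_n \in \asqnk$ through its external skeleton and then apply a biasing argument. Each $\sigma \in \asqnk$ decomposes uniquely into a pair $(S, P)$ where $S \in \sq(n)$ is the external skeleton (external points re-indexed to a permutation of size $n$) and $P$ records the $k$ internal insertions; writing $N_k(S)$ for the number of extensions of $S$, we get $|\asqnk| = \sum_{S \in \sq(n)} N_k(S)$, and sampling $\bm\sigma_n$ uniformly amounts to sampling $\bm S_n \in \sq(n)$ with law proportional to $N_k$ and then picking an extension uniformly.

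The combinatorial core is an asymptotic for $N_k(S)$ when the permuton of $S$ is close to $\mu^z$. An insertion at a (column-slot, row-slot) produces an internal point, and leaves every point of $S$ external, exactly when its rescaled coordinates lie strictly inside the rotated rectangle with corners $(z,0), (0,z), (1-z,1), (1,1-z)$: interior points have dominators on all four sides, and a short case check using the piecewise-linear description of the rectangle sides shows that no external point of $S$ loses its record status. The rectangle has area $2z(1-z)$, so the number of valid single insertions is $\sim 2z(1-z)\,n^2$, and for $k$ unordered insertions I expect
\[
N_k(S) \;\sim\; \frac{(2z(1-z))^k\,n^{2k}}{k!}.
\]
As a consistency check, summing this over $S \in \sq(n)$ and using $\E[(\bm z(1-\bm z))^k] = B(k+1,k+1) = (k!)^2/(2k+1)!$ recovers exactly the asymptotic in Theorem~\ref{approx_size}.

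By the main result of \cite{borga2019square}, a uniform $\tilde{\bm S}_n \in \sq(n)$ admits a natural parameter $\bm z_n \to \bm z$ uniform on $(0,1)$ and permuton $\mu_{\tilde{\bm S}_n} \to \mu^{\bm z}$. Biasing by $N_k / n^{2k}$, whose limit is $(2z(1-z))^k/k!$, produces a limiting parameter with density proportional to $(t(1-t))^k$ on $(0,1)$, which, after normalization, is exactly $f_{\zz^{(k)}}(t) = (2k+1)\binom{2k}{k}(t(1-t))^k$. Hence $\mu_{\bm S_n} \to \mu^{\zz^{(k)}}$ in distribution. Finally, since $k$ is fixed while $n \to \infty$, the $k$ additional internal points form a vanishing fraction $k/(n+k)$ of the atoms of $\bm\sigma_n$, so $\mu_{\bm\sigma_n}$ has the same permuton limit.

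The main obstacle is transferring the pointwise $N_k(S)$ asymptotic through the biasing. One needs enough uniform control — for instance, a "good event" $\{\bm z_n \approx \bm z\}$ of overwhelming probability under both the uniform and the biased laws on $\sq(n)$, together with uniform integrability of $N_k(\tilde{\bm S}_n)/n^{2k}$ — so that expectations of the form $\E[f(\mu_{\tilde{\bm S}_n}) \cdot N_k(\tilde{\bm S}_n)/n^{2k}]$ converge to $\E[f(\mu^{\bm z}) \cdot (2\bm z(1-\bm z))^k/k!]$ for any bounded continuous $f$ on permutons. Controlling the boundary effects near the edges of the rectangle, where "interior" is a fine-grained condition depending on microscopic features of $S$ and only contributes a lower-order correction, is the technical heart of this step.
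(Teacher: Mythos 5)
Your factor-through-the-external-skeleton plus bias-by-$N_k$ argument is the paper's own route: the insertion-count asymptotic $N_k(S)\sim (2z(1-z))^k n^{2k}/k!$ is Lemma~\ref{regbound} together with Corollary~\ref{jarjar}, the biasing step is carried out explicitly in \eqref{coral} (via Corollary~\ref{cor:asympt}) to identify the limit law of $\bm\sigma_n^{-1}(1)/n$ as beta$(k+1,k+1)$, and the transfer from the anchor parameter to the permuton is Lemmas~\ref{lem:nochanges2} and~\ref{lem:nochanges}. The uniform-control and boundary issues you correctly flag as the technical heart are exactly what the Petrov-condition machinery and the bound on $|\mathcal{R}_{irr}|$ in \eqref{eq:irr_bound} are there to resolve.
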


The distribution of $\zz^{(k)}$, when $k$ increases, gives more weight around the value $1/2$ (see Fig.~\ref{fig:beta_distrib}). We therefore expect that, in the regime when $k\to\infty$ together with $n$ and $k=o(n)$, a uniform random permutation with $k$ internal points tends to $\mu^{1/2}$. The following theorem shows exactly this concentration result.

\begin{figure}[htbp]
	\begin{center}
		\includegraphics[scale=.5]{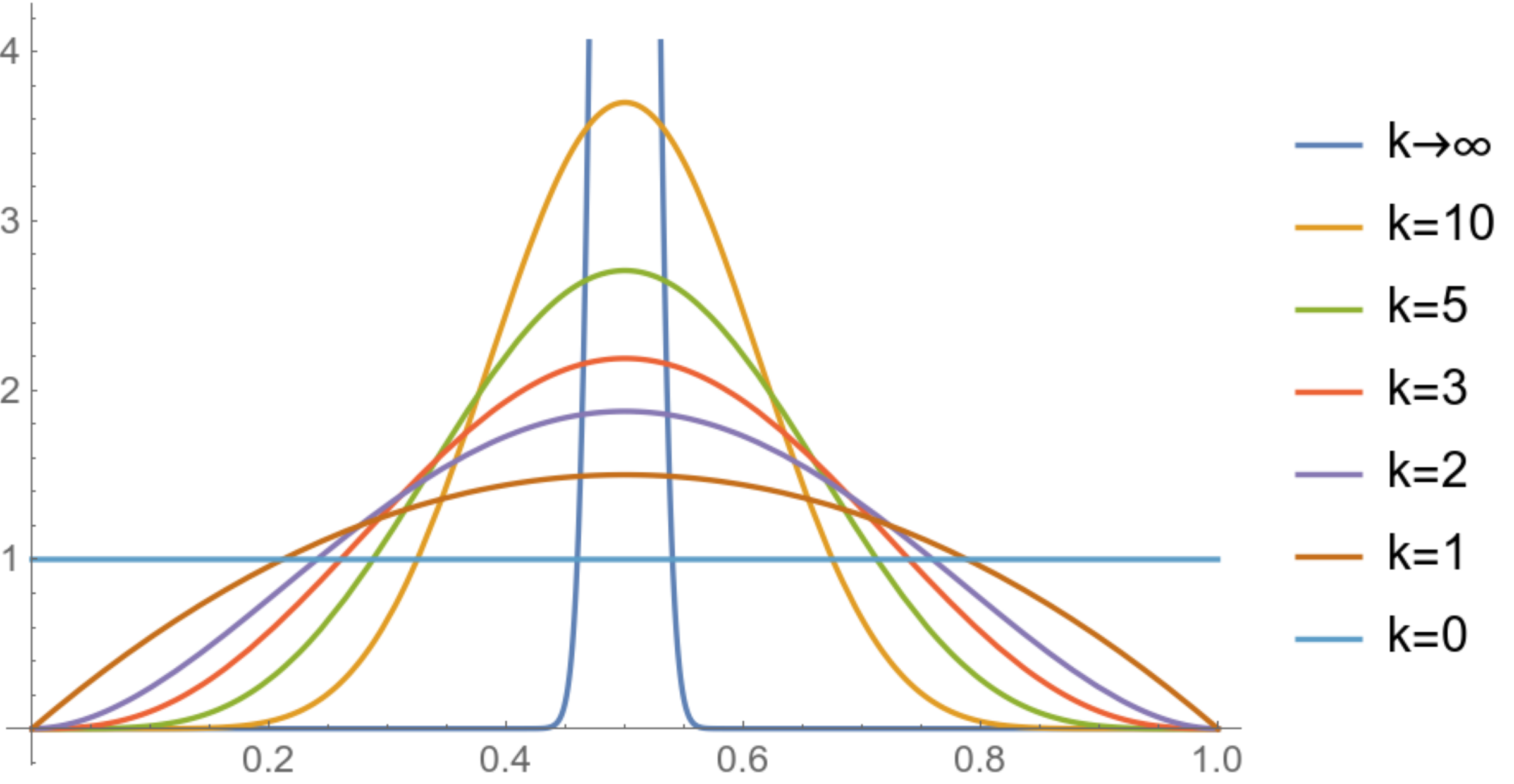}
		\caption{The chart displays the density of the distribution of $\zz^{(k)}$ for different values of $k$.}\label{fig:beta_distrib}
	\end{center}
\end{figure}

\begin{theorem}\label{permuton_limit}
	Let $k$ and $n$ both tend to infinity with $k=o(n)$. If $\bm\sigma_n$ is uniform in $\asqnk$ then
	$$\mu_{\bm{\sigma}_n} \stackrel{d}{\longrightarrow} \mu^{1/2}.$$
\end{theorem}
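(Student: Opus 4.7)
\emph{Proof plan.} Since $\mu^{1/2}$ is deterministic, convergence in distribution to $\mu^{1/2}$ is equivalent to convergence in probability, which is what I will establish. Write $\bm\sigma_n^\sq$ for the square permutation on $n$ points obtained by deleting the $k$ internal points of $\bm\sigma_n$, and let $\bm z_n\in(0,1)$ be its rescaled ``rectangle parameter'' in the sense of \cite{borga2019square}. Because $k=o(n)$, removing $k$ points perturbs a permuton by at most $O(k/n)=o(1)$ in the weak topology, so it suffices to show
\begin{equation*}
\mu_{\bm\sigma_n^\sq} \xrightarrow{\mathbb P} \mu^{1/2}.
\end{equation*}

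The strategy then mimics the proof of Theorem~\ref{fixedk_thm}, but with the $k$-dependence tracked carefully. Decomposing the uniform measure on $\asqnk$ according to the underlying square permutation and its shape parameter $z$, the number of admissible ways to insert $k$ internal points into a square permutation of parameter $z$ takes the form $C_k\cdot(n^2 z(1-z))^k(1+o(1))$ uniformly in $z$ bounded away from $0$ and $1$; this is precisely the input that produces the Beta$(k+1,k+1)$ density in Theorem~\ref{fixedk_thm}. Combined with the asymptotically uniform distribution of the rectangle parameter under a uniform element of $\sq(n)$ (from \cite{borga2019square}) and with the log-enumeration of Theorem~\ref{approx_size_2}, this shows that the density of $\bm z_n$ is, up to a multiplicative factor $e^{o(k)}$, proportional to $(z(1-z))^k$ on $(0,1)$. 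Since this density concentrates at $z=1/2$ with Gaussian fluctuations of order $1/\sqrt k$, we conclude $\bm z_n \xrightarrow{\mathbb P} 1/2$.

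Conditional on $\bm z_n$, the permutation $\bm\sigma_n^\sq$ is essentially uniform among square permutations with that shape parameter, and by the results of \cite{borga2019square} its permuton converges to $\mu^{\bm z_n}$ whenever $\bm z_n$ stays bounded away from $0$ and $1$. Combining this with $\bm z_n\xrightarrow{\mathbb P}1/2$ and the continuity of $z\mapsto \mu^z$ near $z=1/2$ yields $\mu_{\bm\sigma_n^\sq}\xrightarrow{\mathbb P}\mu^{1/2}$, and hence the claim.

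The main obstacle is making the insertion estimate $C_k\cdot(n^2 z(1-z))^k(1+o(1))$ uniform in both $z$ and $k$: the fixed-$k$ version drops out essentially for free in Theorem~\ref{fixedk_thm}, but when $k\to\infty$ one must control multiplicative errors of size $e^{o(k)}$ across the entire range of $z$. Theorem~\ref{approx_size_2} is tailored to provide precisely this accuracy, and the contributions from $z$ near $0$ or $1$, where the geometry of square permutations degenerates, are suppressed by the exponential decay of $(z(1-z))^k$ so that they can be truncated away without affecting the limit.
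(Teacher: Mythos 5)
Your plan matches the paper's: show that the rescaled anchor $\bm z_n = \bm\sigma_n^{-1}(1)/n$ concentrates at $1/2$ (the paper's Lemma~\ref{lemm:conc_result}), use that removing $k=o(n)$ internal points changes the permuton by $o(1)$ (Lemma~\ref{lem:nochanges2}), and invoke continuity of $z\mapsto\mu^z$. Two small imprecisions worth noting, neither fatal: the insertion count is $|I(\sigma)|^k$ up to a multiplicative factor $e^{o(k)}$, not $(1+o(1))$ — you correct this a sentence later, but then ``Gaussian fluctuations of order $1/\sqrt k$'' should really be a crude tail bound of the form $\sqrt{k}\,e^{-4\lambda^2 k}\,e^{o(k)}\to 0$, which is what the paper actually proves; and ``conditional on $\bm z_n$, $\bm\sigma_n^{\sq}$ is essentially uniform'' obscures that the exterior is size-biased by $|\asq(\cdot,k)|$, a point the paper sidesteps entirely by noting (Lemma~\ref{lem:nochanges}) that $d_{\square}(\mu_{\sigma_n},\mu^{z_n})\to 0$ uniformly over all regular permutations, so no conditioning is required.
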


The probabilistic approach used to obtain our results has a wide range of possible applications and does not apply only to the set of square permutations. For example, in Section \ref{sect:321av} of this paper, we apply our techniques to establish the asymptotic enumeration for permutations avoiding the pattern $321$ with $k$ additional internal points. Permutations avoiding a decreasing sequence of length three are extensively studied in the literature, see for instance \cite{borga2018local,callan,hoffman2017pattern,HRS1,HRS2, Ja321,madras_monotone,mp, mrs}. We recall that the points of a $321$-avoiding permutation can be partitioned into two increasing subsequences, one weakly above the diagonal and one strictly below the diagonal. Therefore $321$-avoiding permutations are particular instances of square permutations.

Let $\avnk$ denote the set of permutations avoiding the pattern $321$ with $n$ external points and $k$ additional internal points or, equivalently, the subset of permutations $\sigma$ in $\asqnk$ where the pattern induced by the records of $\sigma$ is in $\avn(321)$. For fixed $k$, we show (Theorem~\ref{parallelgf123}) that the generating function of these permutations is again algebraic of degree 2, and more precisely rational in the Catalan generating series.  Explicit expressions are derived for $k=1,2,3$. As in the case of square permutations, for $k>3$, the computations to determine the generating function become intractable (see Section \ref{sect:genfunc}). Nevertheless, using our new probabilistic approach, we are able to compute the first order approximation of the enumeration.  

Let $c_n$ denote the $n$-th Catalan number, $c_n=\frac{1}{n+1}{2n\choose n}$, so that $|\avn(321)| = c_n$.    

\begin{theorem}\label{thin red line}
	Fix $k>0$.  Then as $n\to\infty,$
	
	$$|\avnk|\sim \frac{(2n)^{3k/2}}{k!}\cdot c_n\cdot  \mathbb{E}\left[\left(\int_0^1\bm e_t dt\right)^k\right],$$
	where $\bm e_t $ denotes the standard Brownian excursion on the interval $[0,1]$.
	
\end{theorem}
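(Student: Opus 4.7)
The plan is to partition $\avnk$ by the \emph{skeleton}: to each $\sigma \in \avnk$ I associate the $321$-avoiding permutation $\tau$ of size $n$ obtained from the pattern induced by the records of $\sigma$. Writing
$$|\avnk| = \sum_{\tau\in\avn(321)} A_k(\tau),$$
where $A_k(\tau)$ counts the number of $\sigma\in\avnk$ whose skeleton is $\tau$, the problem reduces to estimating $A_k(\tau)$ for a typical $\tau$ and averaging over a uniform random $\tau \in \avn(321)$; the normalization is provided by $|\avn(321)|=c_n$.

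The first step is to show that $A_k(\tau)\sim |R(\tau)|^k/k!$, where $R(\tau)$ is the \emph{feasible region} for a single internal-point insertion, namely the set of grid cells where an inserted point is strictly surrounded by points of $\tau$ in all four quadrants and where its insertion does not destroy any existing record. A direct case analysis (which is tractable here because every point of a $321$-avoiding permutation is already a record) identifies $R(\tau)$ with the region enclosed between the two chains of records that form the upper and lower envelopes of the diagram of $\tau$, up to negligible boundary corrections. For fixed $k$ the corrections from placing $k$ points simultaneously (coincident rows or columns, or one inserted point spoiling another's internal status) contribute only $O(|R(\tau)|^{k-1})$ terms, which drop out at leading order.

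The second step invokes the known scaling limit for uniform $321$-avoiding permutations. If $\bm\tau_n$ is uniform in $\avn(321)$, then the vertical displacement of its diagram from the diagonal, rescaled by $\sqrt{2n}$, converges in distribution to the standard Brownian excursion $\bm e$ (via the bijection with Dyck paths; see, e.g., Hoffman--Rizzolo--Slivken). Consequently $|R(\bm\tau_n)|/(2n)^{3/2}$ converges in distribution to $\int_0^1\bm e_t\,dt$. Averaging over the uniform skeleton then yields
$$|\avnk| = c_n\cdot\mathbb{E}[A_k(\bm\tau_n)] \sim \frac{(2n)^{3k/2}}{k!}\cdot c_n\cdot \mathbb{E}\left[\left(\int_0^1\bm e_t\,dt\right)^k\right].$$

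The main obstacle I foresee is upgrading convergence in distribution of $|R(\bm\tau_n)|^k/(2n)^{3k/2}$ to convergence of its expectation; this amounts to establishing uniform integrability of this sequence, which I would obtain by combining classical moment bounds on the Brownian excursion area with sharp tail estimates on the maximum fluctuation of a uniform $321$-avoiding permutation about the diagonal. A secondary technical issue is making the approximation $A_k(\tau)\sim |R(\tau)|^k/k!$ quantitatively uniform in $\tau$ so that atypical skeletons (with abnormally large or small feasible region) do not spoil the estimate; the same tail bounds suffice to control their contribution.
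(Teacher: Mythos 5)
Your proposal follows essentially the same route as the paper: your skeleton decomposition is the paper's decomposition by exterior, your feasible region $R(\tau)$ is the paper's insertion set $I(\tau)$ (with the approximation $A_k(\tau)\sim|R(\tau)|^k/k!$ corresponding to Corollary~\ref{graves_int}, Lemma~\ref{insert_bound} and Lemma~\ref{city maps}), the identification $|R(\tau_n)|/(2n)^{3/2}\approx\int_0^1F_{\tau_n}(t)\,dt$ is Lemma~\ref{historique}, and your two closing obstacles (uniform integrability via tail bounds on the maximal displacement, and uniformity of the $A_k(\tau)$ approximation by restricting to well-behaved skeletons) are precisely what the paper resolves through the Petrov conditions and the exponential moment bound on the Dyck path maximum from Khorunzhiy--Marckert. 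The argument is sound and matches the paper step for step.
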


The evaluation on the right-hand side of the $k$-th moment of the Brownian excursion area is derived in Section 2 of \cite{janson2007brownian} where the author shows that
$$\mathbb{E}\left[\left(\int_0^1\bm e_t dt\right)^k\right]=(36\sqrt{2})^{-k}\frac{2\sqrt{\pi}}{\Gamma((3k-1)/2)}\xi_k,$$ where $\xi_k$ satisfies the recurrence
\begin{equation}\label{svante constant}
\xi_r = \frac{12r}{6r-1}\frac{\Gamma(3r+1/2)}{\Gamma(r+1/2)}- \sum_{j=1}^{r-1}{r \choose j} \frac{\Gamma(3j+1/2)}{\Gamma(j+1/2)}\xi_{r-j}, \qquad r\geq 1.
\end{equation}

The final result of our paper is a generalization of Theorem 1.2 in \cite{hoffman2017pattern} where the authors proved that the points of a uniform random 321-avoiding permutation concentrate on the diagonal and the fluctuations of these points converge in distribution to a Brownian excursion. We are able to generalize this result for uniform random permutations in $\avnk$.

We define for a permutation $\tau^k_n\in \avnk$ (with the convention that $\tau^k_n(0)=0$) and $t\in[0,1]$,
$$F_{\tau^k_n}(t) \coloneqq \frac{1}{\sqrt{2(n+k)}}\big |\tau^k_n(s(t)) - s(t) \big|,$$
where $s(t)=\max\left\{m\leq \lfloor (n+k)t \rfloor|\tau^k_n(m)\text{ is an external point}\right\}$. Note that heuristically the function $F_{\tau^k_n}(t)$ is interpolating only the external points of $\tau^k_n$, forgetting the internal ones. We also introduce the following biased Brownian excursion.

\begin{definition} \label{def:kbiasedex}
	Let $k>0$. The $k$-biased Brownian excursion $(\bm{e}^k_t)_{t\in[0,1]}$ is a random variable in the space of right-continuous functions $D([0,1],\mathbb{R})$ with the following law: for every continuous bounded functional $G:D([0,1],\mathbb{R})\to\R,$
	$$\E\left[G\left(\bm{e}^k_t\right)\right]=\mathbb{E}\left[\left(\int_0^1\bm e_t dt\right)^k\right]^{-1}\E\left[G(\bm{e}_t)\cdot\left( \int_0^1 \bm{e}_t dt\right) ^k\right],$$
	where $\bm{e}_t$ is the standard Brownian excursion on [0,1].
\end{definition}

\begin{theorem}\label{thm:fluctuations}
	Fix $k>0$. Let $\bm{\tau}^k_n$ be a uniform random permutation in $\avnk$.  Then
	$$\left(F_{\bm{\tau}^k_n}(t)\right)_{t\in [0,1]} \stackrel{d}{\longrightarrow} \left(\bm{e}^k_t\right)_{t\in [0,1]},$$
	where $\bm{e}^k_t$ is the $k$-biased Brownian excursion on $[0,1]$, and the convergence holds in the space of right-continuous functions $D([0,1],\mathbb{R})$.
\end{theorem}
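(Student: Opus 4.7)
The plan is to decompose $\bm{\tau}^k_n \in \avnk$ into its skeleton $\phi(\bm{\tau}^k_n) \in \avn(321)$ -- the $321$-avoiding permutation of size $n$ obtained by deleting the $k$ internal points and relabeling -- together with the positions and values of the inserted internal points. Writing $N(\sigma) := |\phi^{-1}(\sigma)|$, the pushforward of the uniform law on $\avnk$ by $\phi$ is the size-biased measure $\prob(\phi(\bm{\tau}^k_n)=\sigma) = N(\sigma)/|\avnk|$ on $\avn(321)$. Once this biased law is understood, and the fluctuations $F_{\bm{\tau}^k_n}$ and $F_{\phi(\bm{\tau}^k_n)}$ are shown to share the same scaling limit, the theorem follows.

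The key enumerative input is a pointwise version of the asymptotic underlying Theorem \ref{thin red line}: for a set of \emph{typical} $\sigma \in \avn(321)$ of probability $1 - o(1)$ under the uniform measure,
\begin{equation*}
N(\sigma) = \frac{(2n)^{3k/2}}{k!}\left(\int_0^1 F_\sigma(t)\,dt\right)^k\bigl(1 + o(1)\bigr).
\end{equation*}
The geometric intuition is that a valid internal insertion is essentially a choice of $k$ integer cells in the region bounded by the upper staircase of LR-maxima and the lower staircase of RL-minima of $\sigma$, a region of area $\sim n\sqrt{2n}\int_0^1 F_\sigma(t)\,dt$. Combined with $\binom{A}{k} \sim A^k/k!$, this produces the display above.

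Let $\bm{\sigma}_n$ be uniform in $\avn(321)$, so that by \cite{hoffman2017pattern} one has $F_{\bm{\sigma}_n} \stackrel{d}{\longrightarrow} \bm{e}_t$ in $D([0,1],\R)$. For any bounded continuous $G:D([0,1],\R)\to\R$,
\begin{equation*}
\E\bigl[G(F_{\phi(\bm{\tau}^k_n)})\bigr]
= \frac{\E\bigl[N(\bm{\sigma}_n)\,G(F_{\bm{\sigma}_n})\bigr]}{\E[N(\bm{\sigma}_n)]}
\longrightarrow \frac{\E\bigl[G(\bm{e})\bigl(\int_0^1 \bm{e}_t\,dt\bigr)^k\bigr]}{\E\bigl[\bigl(\int_0^1 \bm{e}_t\,dt\bigr)^k\bigr]}
= \E[G(\bm{e}^k)],
\end{equation*}
where the passage to the limit uses the pointwise asymptotic above, continuity of the functional $f \mapsto \int_0^1 f$, and uniform integrability of $(\int F_{\bm{\sigma}_n})^k$ coming from moment bounds on $\sup_t F_{\bm{\sigma}_n}(t)$ in the $\avn(321)$ setting. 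This identifies the limit of the skeleton fluctuation as the $k$-biased Brownian excursion of Definition \ref{def:kbiasedex}.

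Finally, for fixed $k$, inserting $k$ internal points shifts the position and value of each external point by at most $k$, which combined with the asymptotic H\"older continuity of $F_{\phi(\bm{\tau}^k_n)}$ inherited from $\bm{e}$ gives
\begin{equation*}
\sup_{t\in[0,1]}\bigl|F_{\bm{\tau}^k_n}(t) - F_{\phi(\bm{\tau}^k_n)}(t)\bigr| \stackrel{\prob}{\longrightarrow} 0,
\end{equation*}
whence $F_{\bm{\tau}^k_n} \stackrel{d}{\longrightarrow} \bm{e}^k$ by Slutsky. I expect the main obstacle to be the local enumeration in the second paragraph: whereas Theorem \ref{thin red line} pins down only the expectation $\E[N(\bm{\sigma}_n)]$, the biasing argument requires the stronger pointwise asymptotic for $N(\sigma)$, demanding a careful skeleton-by-skeleton description of the admissible insertion region and its combinatorics.
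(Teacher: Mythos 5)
Your proposal follows the same route as the paper's proof: decompose into the exterior skeleton, bias the uniform measure on $\avn(321)$ by the pointwise insertion count $N(\sigma)\sim\frac{(2n)^{3k/2}}{k!}\bigl(\int_0^1 F_\sigma(t)\,dt\bigr)^k$ valid on the Petrov set (this is exactly the paper's Lemma~\ref{city maps}, built from Lemmas~\ref{flux}, \ref{historique} and the insertion-sequence bounds of Lemma~\ref{insert_bound}), pass to the limit via moment/uniform-integrability bounds from the Dyck-path maximum, and finally show $\sup_t\bigl|F_{\bm\tau^k_n}(t)-F_{\ext(\bm\tau^k_n)}(t)\bigr|\to 0$. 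The one step you should tighten is the appeal to ``H\"older continuity inherited from $\bm e$'' to control that supremum, which is circular as phrased (and convergence in law alone would not deliver a deterministic modulus of continuity); the paper instead obtains the needed discrete oscillation bound directly from the Petrov conditions on the Dyck path of the exterior (Lemma~\ref{lem:fluctuations}), using that inserting $k$ points displaces each external point by $O(k)$ in position and value, so that the change in the normalized height is $O(n^{.4}/\sqrt{n})\to 0$ uniformly.
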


\subsection*{Further questions} We collect here some problems and open questions that we would like to investigate in future projects.

\begin{enumerate}
	
	\item We studied the permuton limit for permutations with no internal points (\cite[Theorem 4.4]{borga2019square}), with a fixed number of internal points (Theorem \ref{fixedk_thm}) and with an increasing but negligible number of internal points (Theorem \ref{permuton_limit}). What can we say about the permuton limit of $\asqnk$ when $k $ has order $n$ (or is larger than $n$)? We expect a phase transition in the permuton limit from the permuton $\mu^{1/2}$ to the permuton given by the Lebesgue measure on the unit square (which is the limit of uniform permutations). We believe that the precise analysis of this phase transition is an interesting and challenging question to address.
	\item In Section \ref{sect:321av} we investigate $321$-avoiding permutations with $k$ additional internal points for $k$ fixed. What about the case when $k\to\infty$? We believe that in this regime the fluctuations of the points for a uniform permutation are not of order $\sqrt n$ any more, drastically changing the behavior of the limiting shape of the permutation. 
	\item The $k$-th moment of the Brownian excursion area appearing in Theorem~\ref{thin red line} is known to be the continuous limit of the normalized $k$-th moment of the area under large Dyck paths: it would be interesting to establish a bijection between $\avnk$ and some specific set of Dyck paths covering $k$ marked points.  
\end{enumerate}

\subsection*{Outline of the paper}

Section \ref{sect:genfunc} briefly explores the explicit generating functions for $\asqnk$ and $\asq(\avn(321),k)$.  Even for small $k$, the generating functions become quite complicated.  Section \ref{sect:insert} explains how to construct permutations in $\asqnk$ starting from a permutation in $Sq(n)$.  Section \ref{sect:perm_to_anchored_seq} recalls useful results from \cite{borga2019square} that are necessary in the subsequent sections.  Section \ref{sect:asym_enum} contains the proof of Theorems \ref{approx_size} and \ref{approx_size_2}, while Section \ref{sect:permutons} considers permutons and contains the proof of Theorems \ref{fixedk_thm} and \ref{permuton_limit}.  Finally Theorems \ref{thin red line} and \ref{thm:fluctuations} are proved in Section \ref{sect:321av}.  

\section{Generating Functions for small values of $k$.}\label{sect:genfunc}
In \cite{disanto2011permutations}, Disanto et al.\ extended the linear recursive construction of square permutations that was given in \cite{duchi_square1} in order to enumerate the class $\asqnk$. In particular they proved the following theorem for the generating function $ASq^{(k)}(t)=\sum_{n \ge 4}|\asq(n,k)| \cdot t^{n}$ of almost square permutations with $n$ external points and $k$ internal points:

\begin{theorem}[\cite{disanto2011permutations}]\label{squareInt123}
	For all $k\geq0$, the generating function $ASq^{(k)}(t)$ of almost square permutations with $k$ internal points is algebraic of degree 2 and there exists a
	rational function $R^{(k)}(u)$ such that
	\[
	ASq^{(k)}(t)= R^{(k)}(C(t)), 
	\]
	where $C(t)$ is the Catalan generating function: $C(t)=\frac{1-\sqrt{1-4t}}{2t}$.
	In particular\footnote{The expression of $R^{(k)}(u)$ in \cite{disanto2011permutations} differs by a factor $f(u)=((u-1)u^{-2})^{k-1}$, or equivalently by a factor $f(C(t))=t^{k-1}$, from the one in \eqref{con:Exact} due to the fact that the authors of \cite{disanto2011permutations} considered permutations of size $n$ with $k$ internal points (we consider permutations of size $n+k$ with $k$ internal points) and moreover, a factor $t$ is missing in their expression.}, for $k=1,2,3$,
	\begin{equation}\label{con:Exact}
	R^{(k)}(u)=\frac{8(u-1)^{4}u^{k-3}}{(2-u)^{4+4k}}P^{(k)}(u),
	\end{equation}
	where $P^{(1)}(u),P^{(2)}(u)$ and $P^{(3)}(u)$ are explicit polynomials given in \cite{disanto2011permutations}.
\end{theorem}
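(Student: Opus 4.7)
The plan is to adapt the linear recursive construction for $Sq(n)$ from \cite{duchi_square1} to the almost-square setting, deriving a functional equation whose kernel factors through the Catalan series $C(t)$. I would begin by recalling the generating-tree description of square permutations: square permutations can be built by processing points in a canonical order (say, from left to right along their boundary), and at each stage the only information needed to continue is a finite ``state'' such as the number of currently free slots on each of the four sides of the diamond. The succession rule on states together with the kernel method is known to yield an algebraic-of-degree-$2$ generating function for $Sq(n)$, in fact expressible rationally in $C(t)$.

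Next, I would enrich this construction to allow internal points. The geometric picture (made precise in Section~\ref{sect:insert}) is that an internal point can be inserted only in positions dictated by the current external ``frame,'' so each step of the generating tree admits, in addition to a new external record, a bounded number of internal insertions whose multiplicities depend only on the current state. Marking size by $t$, internal points by $u$, and the state by a catalytic variable $y$, this produces a functional equation of the form
\begin{equation*}
K(t,u,y)\,F(t,u,y)=A(t,u,y)+B(t,u,y)F(t,u,1)+\dots,
\end{equation*}
with coefficients polynomial in $t,u,y$, and specializing to the known square-permutation equation at $u=0$.

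Third, I would apply the kernel method. Cancelling the kernel along its algebraic branch $y=Y(t,u)$ gives an algebraic relation for $F(t,u,1)$ over $\mathbb{Q}(t,u)$, of degree $2$, which is the first claim. To pass to fixed $k$ I would expand $F(t,u,1)=\sum_{k\ge 0}ASq^{(k)}(t)\,u^{k}$ and prove by induction on $k$ that each coefficient lies in $\mathbb{Q}(C(t))$. The base case $k=0$ is the result for $Sq(n)$. For the inductive step, one checks that the branch $Y(t,u)$ and the coefficients $A,B$ are rational in $C(t)$ and $u$, so that differentiating the algebraic equation in $u$ and substituting $u=0$ iteratively yields a linear recursion whose solution stays in $\mathbb{Q}(C(t))$. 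This gives the desired rational function $R^{(k)}$ with $ASq^{(k)}(t)=R^{(k)}(C(t))$.

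Finally, for $k=1,2,3$ the recursion becomes a purely symbolic computation: three applications of the derivation operator on the algebraic relation, each followed by reduction modulo the defining polynomial of $C(t)$, produce the explicit $R^{(k)}(u)$ displayed in \eqref{con:Exact}; the $(2-u)^{4+4k}$ denominator and the $(u-1)^4 u^{k-3}$ numerator factor arise directly from the kernel and from the Jacobian of the substitution $u\mapsto C(t)$. The main obstacle is not the rationality argument, which is essentially formal once the equation is in place, but rather step two: designing a \emph{finite-state} succession rule that tracks internal insertions by a single catalytic variable. Making the insertion positions depend only on a bounded summary of the external frame is what keeps the equation algebraic; any looser bookkeeping would destroy the kernel-method structure and the resulting algebraicity in degree~$2$.
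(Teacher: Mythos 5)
This theorem is not proved in the paper at all: it is a cited result of Disanto et al.\ \cite{disanto2011permutations}, and the present paper only describes (in Section~\ref{sect:genfunc}) the shape of that argument when adapting it to the 321-avoiding case of Theorem~\ref{parallelgf123}. So your task is really to reconstruct Disanto et al.'s proof; comparing against the paper's own summary of that method exposes several gaps.

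The central problem is your reduction of the state space to a single catalytic variable~$y$ and a single marking variable~$u$ for internal points. The recursive construction actually requires \emph{five} catalytic variables $u,v,x,y,z$ and a system in \emph{five} interacting subclasses $\{A, B_\alpha, B_{a,u}, B_{a,c}, B_{d,c}\}$; the boundary state of a partially built almost-square permutation cannot be compressed into one parameter. Your equation $K(t,u,y)F(t,u,y)=A+BF(t,u,1)+\cdots$ is therefore not the equation one obtains, and nothing in your sketch justifies that a one-variable kernel exists. Related to this, you expect a \emph{bounded} number of internal insertions ``whose multiplicities depend only on the current state,'' but the number of available internal positions at a given step grows with the size of the object (on the order of a product of two boundary lengths), which is exactly why multiple catalytic variables are needed.

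A second genuine gap is the passage from the bivariate series to the individual coefficients. The actual recursion does not mark internal points by a variable $u$ and extract $u$-coefficients: it is indexed by $k$ directly, with
\[
F_E^{(k)}(u,v;x,y,z) = t\cdot\Phi_E(\mathcal{G}^{(k)}(u,v);x,y,z) + \nabla\bigl(F_E^{(k-1)}\bigr)(u,v;x,y,z),
\]
where $\nabla$ is a mixed differential and divided-difference operator in the catalytic variables. At each level $k$ the kernel method is applied afresh, with the inhomogeneous term supplied by $\nabla(F^{(k-1)})$; the kernel root $U$ satisfies $U=1+tU^2$, i.e.\ $U=C(t)$, and the resulting coefficients are rational in $U,u,v$. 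This is what guarantees, by induction on $k$, that each $ASq^{(k)}(t)$ lands in $\mathbb{Q}(C(t))$. Your alternative -- writing a master bivariate algebraic relation in $u$ and differentiating at $u=0$ -- does not obviously keep the $u$-coefficients inside the fixed quadratic extension $\mathbb{Q}(C(t))$: if $F(t,u,1)$ satisfies $aF^2+bF+c=0$ with $a,b,c\in\mathbb{Q}[t,u]$, the $u$-expansion of $\sqrt{b^2-4ac}$ generically produces a tower of new algebraic functions, not a fixed degree-$2$ extension. You assert the rationality in $C(t)$ but give no mechanism that enforces it; in the actual argument this mechanism is exactly the level-by-level kernel cancellation at the root $U=C(t)$, which you have not built into your one-variable equation. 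Without that structure, the degree-$2$ algebraicity and the explicit shape $R^{(k)}(C(t))$ do not follow.
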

It is tempting to conjecture that (\ref{con:Exact}) holds for all
$k$ with some polynomial $P^{(k)}(u)$, but we would like to stress that finding an explicit expression for $R^{(k)}(u)$ is still an open problem. Indeed, even if the solution of
the system requires only subtle substitutions and applications of the
kernel method, it turns out that the computations are heavy because
they involve derivatives and specializations of series in 6 variables,
and the authors of \cite{disanto2011permutations} were not able to push them further than the case
$k=3$. However, assuming (\ref{con:Exact}), they conjectured that $P^{(k)}(C(\frac{1}{4}))= 2^{(4k-1)}k!$ where
$C(\frac{1}{4})=2$. Via singularity analysis \cite{FlSe09}, this
conjecture would imply Theorem~\ref{approx_size} for the asymptotic enumeration of almost
square permutations with $k$ internal points (for $k$ fixed). In particular it proves the cases $k=1,2,3$ of the
theorem. However, as mentioned
earlier, this direct approach appears to be intractable, as opposed to
the probabilistic approach discussed in the rest of the present paper.

\bigskip

We now discuss the  case of $\avnk$, that was not considered in \cite{disanto2011permutations}. Since the strategy is similar to the one used in \cite{disanto2011permutations}, we do not furnish here all the details and we simply explain how to adapt the computations in \cite{disanto2011permutations} to obtain our result. The explicit computations can be found in the Maple files in \cite{maplecomp2019}.

In the case of $\avnk$, the system of equations used in
\cite{disanto2011permutations} takes a slightly simpler form. Let $ASq(Av(321),k)(t)$ denote the generating function for the enumeration sequence $|ASq(Av_n(321),k)|$. This generating function decomposes according to the five relevant subclasses of permutations identified in \cite{disanto2011permutations} as
\[
ASq(Av(321),k)(t)=F^{(k)}_A(t)+F^{(k)}_{B_\alpha}(t)+ F^{(k)}_{B_{a,u}}(t)+ F^{(k)}_{B_{a,c}}(t)+F^{(k)}_{B_{d,c}}(t). \]
Introducing five catalytic variables $u,v,x,y,z$, the multivariate power series $F_E^{(k)}(u,v;x,y,z)=F_E^{(k)}(t;u,v;x,y,z)$, $E\in\{A,B_{\alpha},B_{a,u},B_{a,c},B_{d,c}\}$,  satisfy a system of linear equations of the form (see below for the notation)
\[
F_E^{(0)}(u,v;x,y,z)=t\cdot \Phi_E(\mathcal{G}^{(0)}(u,v);x,y,z)+t^2uv\cdot\delta_{E=A}+t^2\cdot\delta_{E=B_{\alpha}},
\]
and for $k\geq1$ 
\[
F_{E}^{(k)}(u,v;x,y,z)=t\cdot \Phi_E(\mathcal{G}^{(k)}(u,v);x,y,z)+\nabla(F_E^{(k-1)})(u,v;x,y,z).
\]
We now explain the various terms appearing in the previous equations (noting that the system is very similar to the one obtained in \cite[Proposition 2.1]{disanto2011permutations}). We denote with $\delta_{\mathcal{P}}$ the indicator function of a property $\mathcal{P}$. For $k\geq 0$,
$$\mathcal{G}^{(k)}(u,v)=\left\{G^{(k)}_A(u,v),G^{(k)}_{B_{\alpha}}(u,v),G^{(k)}_{B_{a,u}}(u,v),G^{(k)}_{B_{a,c}}(u,v),G^{(k)}_{B_{d,c}}(u,v)\right\},$$
where $G_E^{(k)}(u,v)=F_E^{(k)}(u,v;1,1,1)$ depends only on $u$ and $v$. Additionally, $\Phi_E(\mathcal{G}^{(k)}(u,v);x,y,z)$ is a linear combination with rational coefficients in $u,v,x,y,z$ of the series in the family $\mathcal{G}^{(k)}(a,b)$ using combinations of the substitutions $a\in\{uxy,u,xy,1\}$ and $b\in\{vz,v,z,1\}$. Finally, $\nabla$ is a mixed
differential and divided difference operator,
\begin{align*}
  \nabla(f)(u,v;x,y,z)=&\frac{x^2y^2z}{1-z}\left((\partial_xf)_{|x=xy,y=1,z=1}-(\partial_xf)_{|x=xyz^{-1}}\right)\\
  &+\frac{xy^2}{1-xy^{-1}}\left(\partial_yf-xz^{-1}(\partial_xf)_{|x=xyz^{-1},y=z}\right)\\&+\frac{xy^2z^{-1}}{(1-yz^{-1})^2}\left(f-f_{|x=xyz^{-1},y=z}\right),
\end{align*}
where $\partial_x$ (resp.\ $\partial_y$) denotes the partial derivative w.r.t. $x$ (resp. $y$).

The first system for the series $F_E^{(0)}$ can be solved explicitly
using the kernel method and yields as expected a refinement of the
Catalan generating series. In order to deal with $k\geq1$ we first
set $x=y=z=1$ and build a generic system where the terms $\nabla(F_E^{(k-1)})(u,v,1,1,1)$ are considered as parameters $h^{(k-1)}_E(u,v)$:
\[
G_E^{(k)}(u,v)=t\cdot \Phi_E(\mathcal{G}^{(k)}(u,v);1,1,1)+h^{(k-1)}_E(u,v).
\]
 We solve this system of  equations using the kernel method on the series $G_E^{(k)}(u,v)$. This yields an explicit expression of the form
\[
G_E^{(k)}(u,v)=\sum_{a,b,D} g^{E}_{a,b,D}(U,u,v)\cdot h^{(k-1)}_D(a,b),
\]
with $D\in\{A,B_{\alpha},B_{a,u},B_{a,c},B_{d,c}\}$, $b\in\{v,1\}$ and $a\in\{V,U,u,1\}$, with
$V=1/(1-v(1-U)/U^2)$, and $U=1+tU^2$, and where
the coefficients $g^E_{a,b,D}(U,u,v)$ are rational in $U,u,v$.

Returning to the original problem we have that
\[
F_{E}^{(k)}(u,v;x,y,z)=t\cdot \Phi_E\left(\sum_{a,b,D} g_{a,b,D}^E(U,u,v)\cdot h^{(k-1)}_D(a,b);x,y,z\right)+\nabla(F_E^{(k-1)})(u,v;x,y,z),
\]
where
\begin{align*}
h^{(k-1)}_D(a,b)&=\lim_{x,y,z\to1}\nabla(F_D^{(k-1)})(a,b;x,y,z))
\\&=\left(\partial_x(\partial_y+\partial_z-1)-\frac12(\partial_x^2+\partial_y^2)\right)(F_D^{(k-1)})(a,b;1,1,1).
\end{align*}
This equation implies a result similar to Theorem~\ref{squareInt123} above and it can be iterated from the initial terms
$F_E^{(0)}$ to get the successive series $F_E^{(k)}$ for
$k\geq 1$.
\begin{theorem}\label{parallelgf123}
	For all $k\geq0$, the generating function $ASq(Av(321),k)(t)$ of square permutations avoiding $321$ with $k$ additional internal points is algebraic of degree 2 and there exists a
	rational function $R^{(k)}(u)$ such that
	$$
	ASq(Av(321),k)(t)= R^{(k)}(C(t)),
	$$ 
	where $C(t)$ is the Catalan generating function: $C(t)=\frac{1-\sqrt{1-4t}}{2t}.$
	In particular, for $k=1,2,3$,
	\begin{equation*}
	R^{(k)}(u)= \frac{(u-1)^4}{(u^2+1-u)^{2k}(u-2)^{3k-1}}P^{(k)}(u),
	\end{equation*}
	where $P^{(1)}(u),P^{(2)}(u)$ and $P^{(3)}(u)$ are explicit polynomials given in \cite{maplecomp2019}. 
\end{theorem}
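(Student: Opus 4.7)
The plan is to adapt the functional-equation/kernel-method strategy of \cite{disanto2011permutations} (used to prove Theorem~\ref{squareInt123}) to the subclass $\avn(321)$. First, I would decompose an almost-square $321$-avoiding permutation according to its canonical generating-tree state, giving the five subclasses $A, B_\alpha, B_{a,u}, B_{a,c}, B_{d,c}$ as in \cite{disanto2011permutations}. To each subclass I associate a refined generating series $F_E^{(k)}(t;u,v;x,y,z)$ with catalytic variables $u,v$ tracking the active right-record positions and $x,y,z$ tracking the positions available for later internal-point insertions. The production of a new external point contributes a linear term $\Phi_E(\mathcal{G}^{(k)}(u,v);x,y,z)$, while the insertion of a new internal point in a $321$-free manner is encoded by the differential/divided-difference operator $\nabla$, yielding the recurrence
\[
F_E^{(k)}(u,v;x,y,z)=t\,\Phi_E(\mathcal{G}^{(k)}(u,v);x,y,z)+\nabla(F_E^{(k-1)})(u,v;x,y,z).
\]
The $321$-avoidance makes $\Phi_E$ strictly simpler than in the square case, since productions that would create an occurrence of $321$ with a record and the new point must be discarded; this is the reason the resulting kernel mirrors (is ``parallel'' to) the Catalan kernel.

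Next, I would solve the system inductively on $k$. For each $k$, setting $x=y=z=1$ and treating $\nabla(F_E^{(k-1)})$ as a family of parameters $h_E^{(k-1)}(u,v)$, I apply the kernel method in $u$. The kernel equation $U=1+tU^2$ is exactly the Catalan fixed-point, so the canonical root $U(t)$ is rational in $C(t)$. The kernel method then gives the closed form
\[
G_E^{(k)}(u,v)=\sum_{a,b,D} g_{a,b,D}^E(U,u,v)\cdot h_D^{(k-1)}(a,b),
\]
with rational coefficients $g_{a,b,D}^E$ and evaluations $a\in\{V,U,u,1\}$, $b\in\{v,1\}$, where $V=1/(1-v(1-U)/U^2)$. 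Because $\nabla$ preserves rationality in $(u,v,x,y,z,C(t))$ (it is a combination of $\partial_x$, $\partial_y$ and a divided difference in $z$ followed by algebraic specializations), induction on $k$ yields that each $F_E^{(k)}(t;u,v;x,y,z)$ is rational in those variables. Specializing $u=v=x=y=z=1$ and summing the five contributions gives $ASq(Av(321),k)(t)=R^{(k)}(C(t))$ with $R^{(k)}$ rational; algebraicity of degree $2$ follows since $C(t)$ itself is.

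For the explicit expressions when $k=1,2,3$, I would run the above iteration symbolically starting from $F_E^{(0)}$ (which recovers a refined Catalan series) and extract $R^{(k)}$ in the claimed factored form, exactly as is done in \cite{maplecomp2019}. The denominator factors $(u^2+1-u)^{2k}$ and $(u-2)^{3k-1}$ can be anticipated from the kernel structure: the former comes from the secondary pole $a=V$ introduced by each application of $\nabla$, and the latter from the dominant singularity of $C(t)$ at $t=1/4$ (recall $C(1/4)=2$) accumulating through the differential substitutions; the numerator factor $(u-1)^4$ records the fact that the smallest almost-square permutations with internal points have size at least $4+k$. The main obstacle, and the reason the explicit $P^{(k)}$ are not given in closed form for general $k$, is the combinatorial explosion of the six-variable series $F_E^{(k)}(t;u,v;x,y,z)$: tracking the exact cancellations that compress $R^{(k)}(u)$ into the stated shape beyond $k=3$ seems as intractable here as in the square case treated in \cite{disanto2011permutations}, which is precisely why we develop the probabilistic approach in the remainder of the paper.
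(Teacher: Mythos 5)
Your proposal follows essentially the same route as the paper: adapt the five-subclass functional-equation system of Disanto et al., iterate the recurrence with the $\nabla$ operator, set $x=y=z=1$ and solve by the kernel method with kernel root $U=1+tU^2$ (so $U=C(t)$), and conclude rationality in $C(t)$ by induction on $k$, with the explicit $P^{(k)}$ for $k\le 3$ extracted symbolically. The only material you add beyond the paper's sketch is the speculative attribution of the denominator factors $(u^2+1-u)^{2k}$ and $(u-2)^{3k-1}$ to the secondary pole $V$ and the singularity at $t=1/4$, which is a plausible heuristic but not something the paper claims or proves.
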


Via singularity analysis, this theorem implies the cases $k=1,2,3$ of Theorem~\ref{thin red line}.
But again, for $k > 3$, the computations become intractable.

\section{Internal insertions and deletions for permutations}\label{sect:insert}

To understand permutations in $\asqnk$ we investigate how they can be obtained by adding points to a permutation in $\sq(n)$.  Each permutation $\pi\in \asqnk$ has a unique \emph{exterior}, $\sigma=\ext(\pi)$, obtained by removing the internal points in $\pi$ and appropriately shifting the remaining points (keeping the relative position among them) so that the resulting set of points corresponds to a (square) permutation.  

\begin{example}
	Consider the permutation $\pi=4752316=\begin{array}{lcr}
	\begin{tikzpicture}
	\begin{scope}[scale=.3]
	\permutation{4,7,5,2,3,1,6}
	\draw (3+.5,5+.5) [red, fill] circle (.3);
	\draw (5+.5,3+.5) [red, fill] circle (.3);
	\end{scope}
	\end{tikzpicture}
	\end{array}\in ASq(5,2)$, where we highlighted in red the two internal points. Then the exterior of $\pi$ is the permutation $$\sigma=\ext(\pi)=35214=\begin{array}{lcr}
	\begin{tikzpicture}
	\begin{scope}[scale=.3]
	\permutation{3,5,2,1,4}
	\end{scope}
	\end{tikzpicture}
	\end{array}.$$
\end{example} 

In this section we define the \emph{insertion} and \emph{deletion} operation on permutations.  These operations will allow us to grow certain classes of permutations from other well-understood classes.  

For $n\in\N$, we denote with $[n]$ the set $\{1,2,\dots,n\}$. For a permutation $\sigma$ of size $n$ and a pair $(i,j) \in [n+1]^2$, the \emph{insertion} of $(i,j)$ in $\sigma$ gives the permutation obtained by adding a point at $(i,j)$ and shifting the points in $\sigma$ at or to the right of column $i$ to the right by 1 and shifting the points in $\sigma$ at or above row $j$ up by $1$.  We denote the permutation by $\insrt(\sigma,(i,j))$.  For a permutation $\sigma'$ that contains the point $(i',j')$, the \emph{deletion} of $(i',j')$ in $\sigma'$ gives the permutation obtained by removing the point $(i',j') = (i',\sigma(i'))$ and shifting points to the right of column $i'$ to the left by $1$ and points above row $j'$ down by $1$.  We denote this permutation $\delete(\sigma',(i',j'))$.  If $\sigma'$ is obtained by inserting $(i,j)$ in $\sigma$, then $\sigma$ is obtained by deleting $(i,j)$ from $\sigma'$.  Note that any pair $(i,j)\in [n+1]^2$ is a valid insertion for $\sigma\in \SG_n$, whereas only the points of the form $(i',\sigma'(i'))$ make for valid deletions in $\sigma'$.

\begin{example}
	We consider the permutation $\pi=215684793$ and we insert on it the point $(6,6)$, obtaining (as shown in Fig.~\ref{fig:insert_6_6}) the permutation $\sigma=\insrt(\sigma,(6,6))=2\,1\,5\,7\,9\,6\,4\,8\,10\,3.$
	\begin{figure}[htbp]
		\begin{center}
			\includegraphics[scale=1]{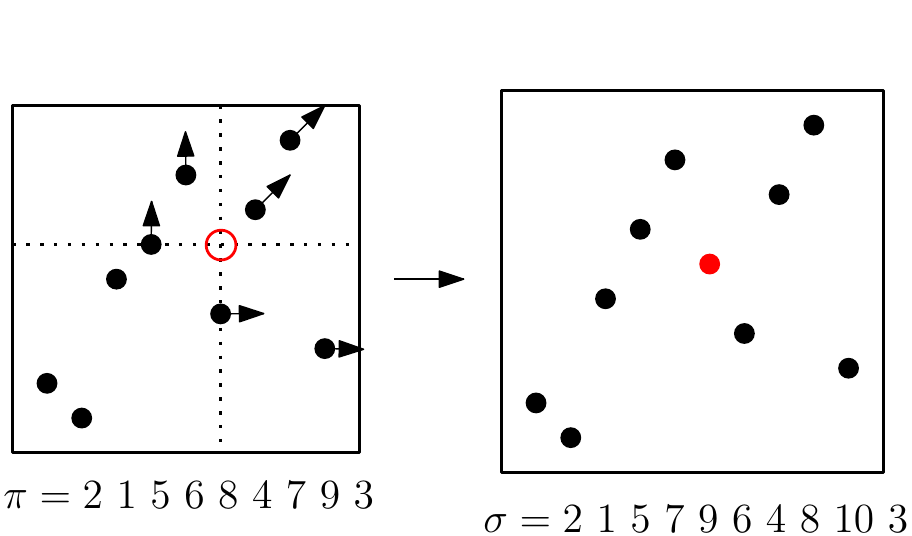}\\
			\caption{Insertion of the point $(6,6)$ (highlighted with a red circle) in the permutation $\pi=215684793$.\label{fig:insert_6_6}}
		\end{center}
	\end{figure}
\end{example}

For a sequence of points, $J=\{(i_\ell,j_\ell)\}_{\ell=1}^k$, and a permutation $\sigma$ of size $n$,   we call $J$ a \emph{valid insertion sequence} for $\sigma$ if $(i_\ell,j_\ell) \in [n+\ell]^2$ for $\ell \in [k].$  A valid insertion sequence, $J$, gives a corresponding sequence of permutations, $(\sigma^0,\cdots, \sigma^k)$, defined by $\sigma^0 = \sigma$ and $\sigma^{\ell}=\insrt(\sigma^{\ell-1},(i_\ell,j_\ell))$ for $1\leq \ell \leq k$.  We denote the final permutation obtained in the sequence by $\insrt(\sigma,J)=\sigma^k.$

Similarly, for a permutation $\rho\in \SG_{n+k}$ and a sequence, $J'=\{(i'_\ell,j'_\ell)\}_{\ell=1}^k$ with $(i'_\ell,j'_\ell)\in [n+k+1-\ell]^2$, we say $J'$ is a \emph{valid deletion sequence} if there is a sequence of permutations $(\rho^0, \cdots, \rho^k)$ with $\rho^0 = \rho$ and, for $1\leq \ell \leq k$, $\rho^{\ell}=\delete(\rho^{\ell-1},(i'_\ell,j'_\ell)).$  If for some $\ell$, $(i'_\ell,j'_\ell)$ is not a valid deletion in $\rho^{\ell-1}$, we say $J'$ is an \emph{invalid deletion sequence} for $\rho$.  If $J'$ is a valid sequence of deletions we let $\delete(\rho,J') = \rho^k$.  If $J$ is an insertion sequence for $\sigma$ and $\pi = \insrt(\sigma,J)$, then the reverse of $J$, denoted $\rev(J)$, is a valid deletion sequence for $\pi$, with $\delete(\pi,\rev(J)) = \sigma.$  

We say an insertion, $(i,j)$, is \emph{internal} for $\sigma$ if the point $(i,j)$ is internal in $\insrt(\sigma,(i,j))$.  A sequence of insertions, $J$, is internal for $\sigma$ if for each $1\leq \ell \leq |J|$ the point $(i_\ell,j_\ell) \in J$ is internal in $\sigma^\ell.$  If $J$ is internal for $\sigma$, the corresponding sequence $(\sigma^0,\cdots, \sigma^{|J|})$ has permutations whose external points are exactly the appropriate shifts of the external points of $\sigma$. In particular, if $\sigma$ is a square permutation, then the external points of the permutations in the sequence are exactly the points of $\sigma$.  Lastly, we say a deletion sequence is internal if every deletion in the sequence comes from an internal point of the corresponding permutation in the sequence. 

For a permutation $\sigma$, let $I(\sigma)$ denote the set of possible internal insertions for $\sigma$ and let $\cJ(\sigma,k)$ denote the set of possible internal insertion sequences of length $k$ for $\sigma$.

\begin{lemma}\label{graves}
	
	Let $\pi \in \SG_{n+k}$ and let $M$ be a collection of $k$ marked points in $\pi$.  There are precisely $k!$ deletion sequences starting from $\pi$ that remove only the $k$ marked points.
	
\end{lemma}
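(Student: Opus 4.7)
The plan is induction on $k$. The base case $k=0$ is immediate: the empty sequence is the unique deletion sequence removing zero points and $0!=1$. For the inductive step I consider $\pi \in \SG_{n+k+1}$ with $k+1$ marked points, and I argue that any deletion sequence of length $k+1$ removing exactly these points must begin with the deletion of one of them, and that any of the $k+1$ marked points is a valid first choice. This partitions the count into $k+1$ cases according to which marked point is deleted first.

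After performing the chosen first deletion, the resulting permutation $\rho^1 \in \SG_{n+k}$ contains the images of the remaining $k$ marked points, whose coordinates have been shifted deterministically by the deletion operation. Declaring these shifted points to be the marked set in $\rho^1$, the inductive hypothesis supplies exactly $k!$ deletion sequences of length $k$ removing them. Prepending the initial deletion gives $(k+1) \cdot k! = (k+1)!$ candidate sequences, and I then verify that this construction produces exactly the valid deletion sequences of $\pi$ removing the $k+1$ marked points.

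Surjectivity is immediate from the recursive structure: any valid deletion sequence of length $k+1$ on $\pi$ removing the marked points factors as a first deletion of one marked point followed by a valid deletion sequence of length $k$ on $\rho^1$ removing the shifted marked set, and therefore arises from the construction above. For injectivity, two sequences arising from different first choices disagree in their first coordinate (since the $k+1$ marked points of $\pi$ occupy distinct positions), while two sequences with the same first deletion give rise to the same $\rho^1$ and must therefore differ in their tails, which are distinct by the inductive hypothesis.

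The only technical point that needs attention is to confirm that all $k+1$ marked points are valid first deletions and that the shift operation produces a well-defined marked set of size $k$ in $\rho^1$. Both are direct consequences of the definition of the deletion operation, which preserves every point other than the one being removed and acts by deterministic integer shifts on their coordinates. So no real obstacle arises, and the induction closes cleanly to give the desired count of $k!$.
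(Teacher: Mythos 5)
Your proof is correct and, at heart, the same counting argument as the paper's: the paper observes directly that a deletion sequence removing exactly $M$ is determined by the order in which the $k$ points of $M$ are removed, so there are $k!$ of them, while your induction simply unrolls this bijection one deletion at a time. The inductive bookkeeping is sound but not strictly necessary; the one-line bijective observation suffices.
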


\begin{proof}
	The order in which the points of $M$ are removed uniquely determines the deletion sequence.  There are $k!$ possible orders. 
\end{proof}

\begin{corollary}\label{graves_int}
	Let $\pi \in \asqnk$ and let $\sigma = \ext(\pi)$.  There are precisely $k!$ internal insertion sequences, $J$, such that $\pi = \insrt(\sigma,J).$
\end{corollary}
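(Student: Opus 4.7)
My plan is to deduce this directly from Lemma~\ref{graves} applied with $M$ equal to the set of $k$ internal points of $\pi$. That lemma immediately supplies $k!$ deletion sequences of $\pi$ that remove exactly these $k$ marked points, and reversing each such sequence yields an insertion sequence starting from $\sigma = \ext(\pi)$ and terminating at $\pi$. So the task reduces to showing two things: first, that each of these $k!$ reversed sequences is genuinely an internal insertion sequence; and second, that every internal insertion sequence $J$ with $\insrt(\sigma, J) = \pi$ arises in this way.

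The key step, which I would isolate as a small lemma, is a stability property for internal points under internal deletion. Concretely: if $\rho$ is any permutation and $p$ is internal in $\rho$, then $\delete(\rho, p)$ has exterior equal to $\ext(\rho)$ (appropriately shifted) and has the same internal points as $\rho$ minus $p$. That external points remain external is immediate, since removing a point never destroys any of the four kinds of records for the remaining points. For the other direction, I would argue that any non-record $q \neq p$ is blocked in each of the four record directions, and if in some direction the nearest blocker happens to be an internal point $r$, then $r$ is itself blocked in that same direction by some $r'$, and $r'$ also blocks $q$; iterating the argument on $r'$, one finds an external blocker of $q$ in each of the four directions, so removing $p$ cannot turn $q$ into a record.

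Once stability is in place, iteration shows that any ordering of the removal of the $k$ internal points of $\pi$ produces a valid internal deletion sequence that ends at $\sigma$, and conversely that every internal deletion sequence of length $k$ starting at $\pi$ must at each step remove one of the points that was originally internal in $\pi$ (because the external point set is frozen throughout the process, so the internal points at each stage are in bijection with the internal points of $\pi$ not yet deleted). Thus the $k!$ sequences supplied by Lemma~\ref{graves} coincide with the internal deletion sequences of $\pi$, and reversing gives precisely the $k!$ internal insertion sequences from $\sigma$ to $\pi$.

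The main obstacle is the stability claim; everything else is bookkeeping around Lemma~\ref{graves}. I expect the iterated-blocker argument above to be the cleanest route, relying only on the fact that being a record is a relative condition and that records of records are records, so chains of blockers terminate at external points.
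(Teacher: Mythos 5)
Your proof is correct, and it takes the same route the paper implicitly takes: the corollary is stated in the paper with no proof, as an immediate consequence of Lemma~\ref{graves}, and the content you make explicit --- the stability property that deleting an internal point neither creates new records nor destroys existing ones, established via the iterated-blocker argument --- is exactly the gap the paper leaves unstated. Both halves of your bookkeeping (that each reversed deletion sequence from Lemma~\ref{graves} is internal, and conversely that any internal insertion sequence from $\sigma$ to $\pi$ must insert precisely the $k$ internal points of $\pi$) follow from that stability claim, so the proposal is a complete and faithful filling-in of the paper's tacit reasoning.
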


It is important to highlight that the insertion sequences in Corollary \ref{graves_int} are internal.  For example, the permutation $\sigma = 4132$ has a unique internal insertion (the point $(3,3)$) that gives the permutation $51342$, while the insertion of $(4,4)$ in $4132$ also gives $51342$ but is not internal (see Fig.~\ref{fig:Double_poss_ins}).

\begin{figure}[htbp]
	\begin{center}
		\includegraphics[scale=1]{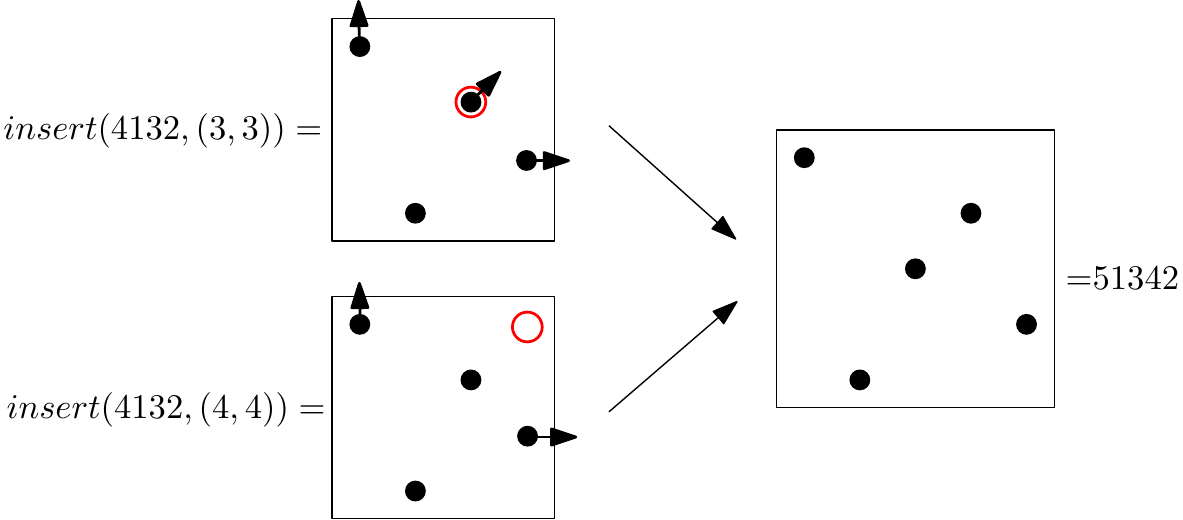}\\
		\caption{Two different insertions (highlighted with a red circle) that give the same permutation. The first insertion is internal, the second one it is not.\label{fig:Double_poss_ins}}
	\end{center}
\end{figure}

\section{Projection for square permutations and Petrov conditions}
\label{sect:perm_to_anchored_seq}
We recall in this section some results from \cite{borga2019square} that are useful for the next sections.

\subsection{Projections for square permutations}
We recall the following key definition.
\begin{definition}
	An \emph{anchored pair of sequences} of size $n$ is a triplet $(X,Y,z_0)$, where $X\in\{U,D\}^n$,
	$Y\in\{L,R\}^n$ and $z_0 \in [n].$ We say that the pair $(X,Y)$ is anchored at $z_0$.
\end{definition}
Given a square permutation $\sigma\in Sq(n),$ we associate to it an anchored pair of sequences $(X,Y,z_0)$ of size $n$ (\emph{cf.}\ Fig.~\ref{Square_perm_sampling_example}) where the labels of $(X,Y)$ are determined by the record types (the sequence $X$ records if a point is a maximum ($U$) or a minimum ($D$) and the sequence $Y$ records if a point is a left-to-right record ($L$) or a right-to-left record ($R$)) and the anchor $z_0$ is equal to the value $\sigma^{-1}(1)$. As a convention, if a point is both a maximum and a minimum (resp.\ a left-to-right and a right-to-left record) we assign a $D$ (resp.\ a $L$).  For a precise and rigorous definition we refer to \cite[Section 2]{borga2019square}.

We denote with $\phi$ the injective\footnote{The injectivity of the map $\phi$ was proved in \cite{duchi_square2}.} map that associates to every square permutation the corresponding anchored pair of sequences, therefore
$$\phi:Sq(n)\to\{U,D\}^n\times\{L,R\}^n\times[n].$$

\begin{figure}[htbp]
	\begin{center}
		\includegraphics[scale=0.5]{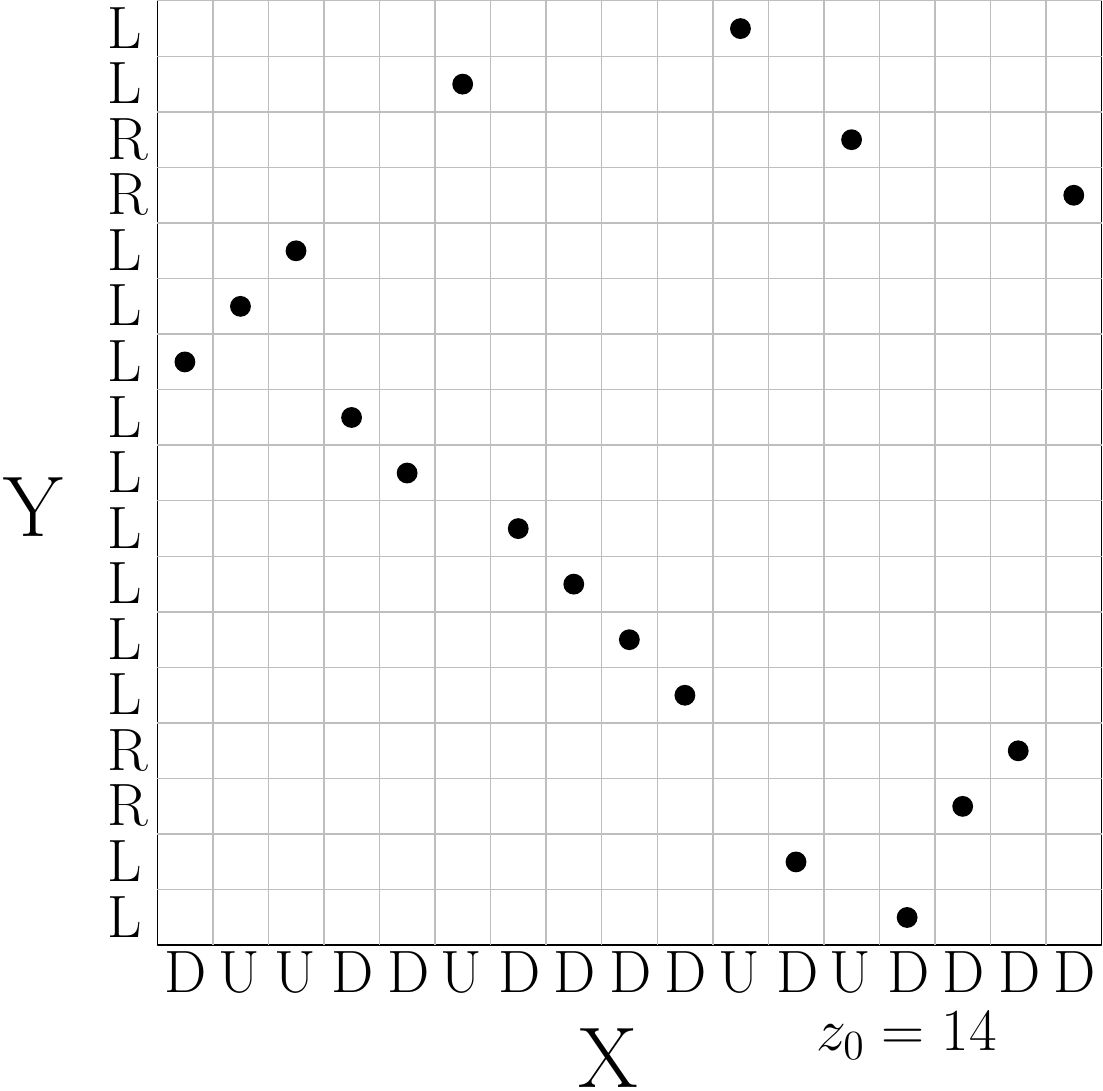}
		\caption{A square permutation $\sigma$ with the associated anchored pair of sequences $\phi(\sigma)=(X,Y,z_0).$  The sequence $X$ is under the diagram (read from left to right) of the permutation and the sequence $Y$ on the left (read from bottom to top).}
		\label{Square_perm_sampling_example}
	\end{center}
\end{figure}

We say that an anchored pair of sequences $(X,Y,z_0)$ of size $n$ is \emph{good} if $X_1 = X_n = X_{z_0}=D$ and $Y_1 = Y_n = L$. Note that $\phi(Sq(n))$ is contained in the set of good anchored pairs of size $n$. 
Note also that the total number of possible good anchored pairs $(X,Y,z_0)$ of size $n$ is
\begin{equation}\label{karl}
2^{n-2}(2\cdot 2^{n-2}+(n-2)\cdot 2^{n-3})= 2(n+2)4^{n-3}.	
\end{equation}

This map $\phi$ is not surjective, but we can identify subsets of good anchored pairs of sequences (called \emph{regular}) and of square permutations where the projection map is a  bijection. In order to do that we need to introduce the \emph{Petrov conditions}.

\subsection{Petrov conditions}

Let $X\in\{U,D\}^n$ and $Y\in\{L,R\}^n$.
Let $\ctd(i)$ denote the number of $D$s in $X$ up to (and including) position $i$.  Similarly define $\ctu(i)$, $\ctl(i)$ and $\ctr(i)$ for the number of $U$s in $X$ and the number of $L$s or $R$s in $Y$, respectively.  Let $\pd(i)$ denote the position of the $i$-th $D$ in $X$ with $\pd(i) = n$ if there are fewer than $i$ indices labeled with $D$ in $X$.  Similarly define $\pu(i)$, $\pl(i)$ and $\pr(i)$ for the location of the indices of the other labels.

\begin{definition}[Petrov conditions]\label{defn:petrov}
	We say that the labels $D$ in $X$ satisfy the Petrov conditions if the following are true:	
	\begin{enumerate}
		\item $|\ctd(i) - \ctd(j) - \frac12(i-j) | < n^{.4}$, for all $|i-j| < n^{.6}$;
		\item $|\ctd(i) - \ctd(j) - \frac12(i-j) |	 < \frac{1}{2}|i-j|^{.6}$, for all $|i-j|> n^{.3}$;
		\item $|\pd(i) - \pd(j) - 2(i-j)| < n^{.4}$, for all $|i-j| < n^{.6}$ and $i,j \leq \ctd(n)$;
		\item $|\pd(i) - \pd(j) - 2(i-j)| < 2|i-j|^{.6}$,	 for all $|i-j|> n^{.3}$ and $i,j \leq \ctd(n)$.
	\end{enumerate}		
\end{definition}

A similar definition holds for the labels $U$ in $X$ and the labels $L$ and $R$ in $Y$ for the functions $\ctu,\ctl,\ctr,$ and $\pu, \pl, \pr$.  We say the Petrov conditions hold for the pair of sequences $X$ and $Y$ if the Petrov conditions hold for the four types of labels of $X$ and $Y$.

\bigskip

Given a permutation $\sigma\in\sq(n)$, we say that $\sigma$ has \emph{regular projections} if the Petrov conditions hold for the corresponding pair of sequences $X$ and $Y$.
For each $z_0 \in [n]$, let $\mathcal{R}(z_0)$ denote the subset of $\sq(n)$ consisting of permutations anchored at $z_0$ and having regular projections. 

Given a sequence $k=k_n=o(n)$ (resp.\ $k=k_n=o(\sqrt{n})$), we fix now some sequence $\delta_n=\delta_n(k)$ such that
\begin{equation}\label{eq:cond_dn}
\delta_n=o(n),\quad \delta_n\geq n^{.9},\quad\text{and}\quad k=o(\delta_n)\quad(\text{resp.\ } k=o(\sqrt{\delta_n})).
\end{equation} 
Note that this is always possible taking for example $\delta_n=\max\{\sqrt{nk},n^{.9}\}$ (resp.\ $\delta_n=\max\{\sqrt{n}k,n^{.9}\}$).

Let $\irr$ denote the permutations of $\sq(n)$ that either do not have regular projections or such that the anchor $z_0$ belongs to $[n] \backslash (\delta_n,n-\delta_n)$. 

For $z_0$ in $(\delta_n,n-\delta_n)$, a uniform square permutation anchored at $z_0$ is in $\irr$ with probability at most $Ce^{-n^c}$ for some positive constants $c$ and $C$ independent of $z_0$ (this follows from the classical bounds for Petrov conditions, see for instance the proof of \cite[Lemma 3.4]{borga2019square}). Thus, for $n$ large enough, we have the following bound
\begin{equation}
\label{eq:irr_bound}
|\irr|\leq 2\delta_n4^{n-2}+2n4^{n-2}Ce^{-n^c}\leq 2\delta_n4^n.
\end{equation}

\bigskip 
 
We also say that a good anchored pair of sequences $(X,Y,z_0)$ is \emph{regular}\footnote{In \cite{borga2019square} a regular anchored pair of sequences $(X,Y,z_0)$ satisfies $z_0\in(n^{.9},n-n^{.9})$ instead of $z_0\in(\delta_n,n-\delta_n)$. One can check that all the statements of \cite{borga2019square} are also true for this slightly more general definition.} if the Petrov conditions hold for $X$ and $Y$ and $z_0\in(\delta_n,n-\delta_n)$.
In \cite[Section 3.2]{borga2019square} we constructed a simple algorithm to produce a square permutation from regular anchored pairs of sequences and we showed that the map $\phi^{-1}$ restricted to this set is bijective. We also showed (see \cite[Lemma 3.8]{borga2019square}) that asymptotically almost all square permutations can be constructed from regular anchored pairs of sequences, thus a permutation sampled uniformly from the set of regular anchored pairs of sequences will produce, asymptotically, a uniform square permutation.

\section{Asymptotic enumeration of almost square permutations}\label{sect:asym_enum}

For $\sigma\in \sq(n)$, let $\asq(\sigma,k)$ denote the set of permutations in $\asqnk$ that are of the form $\insrt(\sigma,J)$ for some valid insertion sequence $J$ that is internal with respect to $\sigma$.  For a collection of permutations $\mathcal{S} \in \sq(n)$, let $\asq(\mathcal{S},k) = \bigcup_{\sigma\in \mathcal{S}} \asq(\sigma,k),$ i.e.\ the set of permutations in $\asqnk$ whose exterior lies in $\mathcal{S}$.     
By Corollary \ref{graves_int}, for each $\pi \in \asq(\sigma,k)	$, there are exactly $k!$ internal insertion sequences $J \in \cJ(\sigma,k)$ such that $\insrt(\sigma,J) = \pi$.  These are the only ways to reach $\pi$ by an internal insertion sequence from a square permutation.  Thus

\begin{equation}\label{bumping}
	|\asqnk| = \sum_{\sigma \in \sq(n)}|\asq(\sigma,k)| = \sum_{\sigma\in \sq(n)}\frac{1}{k!}|\cJ(\sigma,k)|.	
\end{equation}

We proceed by finding upper and lower bounds for $|\cJ(\sigma,k)|$ for $\sigma\in \sq(n)$.    
The following lemma gives bounds on the size of $I(\sigma)$ for $\sigma\in\regperm(z_0)$ for some $z_0 \in (\delta_n,n-\delta_n)$.

\begin{lemma}\label{regbound}
	There exists $c>0$ such that for every $z_0\in (\delta_n,n-\delta_n)$, and every $\sigma \in \regperm(z_0)$,
	\begin{equation}\label{regeq}
		2(z_0 - cn^{.6})(n-z_0-cn^{.6})\leq |I(\sigma)| \leq 			2(z_0 + cn^{.6})(n-z_0 +cn^{.6}).
	\end{equation}
	As a consequence, there exists $\varepsilon>0$ such that $|I(\sigma)|\geq \varepsilon n\delta_n$, for every $z_0\in (\delta_n,n-\delta_n)$ and every $\sigma \in \regperm(z_0)$.
\end{lemma}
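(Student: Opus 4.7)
The plan is to characterize $I(\sigma)$ geometrically as the set of lattice points strictly inside the ``rectangle'' formed by the four record families of $\sigma$, and then to count these points by combining the Petrov conditions of Definition~\ref{defn:petrov} with the reconstruction map $\phi^{-1}$ from \cite[Section~3.2]{borga2019square}.

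The first step is a simple geometric characterization: for $\sigma\in\sq(n)$, an insertion at position $(i,j)\in[n+1]^2$ is internal if and only if $\sigma$ has at least one point in each of the four ``quadrants'' around $(i,j)$, i.e., with column $<i$ and value $\geq j$, column $<i$ and value $<j$, column $\geq i$ and value $\geq j$, and column $\geq i$ and value $<j$. Writing $M_\sigma^L(i),m_\sigma^L(i)$ for the maximum and minimum values of $\sigma$ on columns $<i$ and $M_\sigma^R(i),m_\sigma^R(i)$ for the analogous quantities on columns $\geq i$, this translates into
\[
\max\{m_\sigma^L(i),m_\sigma^R(i)\}\;<\;j\;\leq\;\min\{M_\sigma^L(i),M_\sigma^R(i)\},
\]
so that
\[
|I(\sigma)|\;=\;\sum_{i=1}^{n+1}\bigl(\min\{M_\sigma^L(i),M_\sigma^R(i)\}-\max\{m_\sigma^L(i),m_\sigma^R(i)\}\bigr)_+.
\]

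The second step is to use the Petrov conditions to estimate the four functions $m_\sigma^L,M_\sigma^L,m_\sigma^R,M_\sigma^R$ for $\sigma\in\regperm(z_0)$. These functions trace out the four boundary curves of the rectangular shape of $\sigma$, whose ideal corners in the (column, value) plane are $(z_0,0),(0,z_0),(n,n-z_0),(n-z_0,n)$. Combining the Petrov bounds on the label counts and positions in $X$ and $Y$ with the reconstruction $\phi^{-1}$, I will show that each of the four boundary functions differs from its ideal piecewise-linear value at column $i$ by at most $O(n^{0.6})$, uniformly in $i\in[n+1]$. Summing the resulting pointwise estimate over all $n+1$ columns yields
\[
|I(\sigma)|\;=\;2z_0(n-z_0)+O(n^{1.6}),
\]
the main term being the area of the ideal rectangle, which one computes directly (for instance via the linear change of variables $u=i+j$, $v=j-i$) as $2z_0(n-z_0)$.

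The bounds in~\eqref{regeq} then follow from the elementary identity
\[
2(z_0\pm c n^{0.6})(n - z_0 \pm c n^{0.6})\;=\;2z_0(n-z_0)\pm 2c n^{1.6}+O(n^{1.2})
\]
by choosing $c$ large enough to absorb the $O(n^{1.6})$ error. For the stated consequence, the assumption $z_0\in(\delta_n,n-\delta_n)$ yields $z_0(n-z_0)\geq\delta_n(n-\delta_n)\geq\tfrac12\delta_n n$ for $n$ large, while the corrective term $c n^{0.6}$ is negligible compared to $\delta_n\geq n^{0.9}$; thus $|I(\sigma)|\geq\varepsilon n\delta_n$ for some $\varepsilon>0$. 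I expect the main obstacle to lie in the second step: the Petrov conditions as stated directly control only the label counts and positions of $X$ and $Y$ individually, and some careful bookkeeping via $\phi^{-1}$ is needed to extract uniform $O(n^{0.6})$ control of the four record-rectangle boundary functions $m_\sigma^L,M_\sigma^L,m_\sigma^R,M_\sigma^R$ at every column. Once this translation is in place, the remainder of the argument is routine summation and elementary algebra.
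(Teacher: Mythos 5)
Your approach is essentially the same as the paper's, and your geometric characterization of $I(\sigma)$ and the elementary algebra at the end are sound. The step you correctly flag as the main obstacle — extracting uniform $O(n^{0.6})$ control of the four record-envelope boundaries from the Petrov conditions via $\phi^{-1}$ — is precisely what the paper disposes of in one stroke by citing \cite[Lemma 3.6]{borga2019square}, which states that a regular square permutation anchored at $z_0$ has all its points between the lines $x+y=z_0\pm cn^{.6}$, $x+y=2n-z_0\pm cn^{.6}$, $x-y=z_0\pm cn^{.6}$, $y-x=z_0\pm cn^{.6}$; the bounds in \eqref{regeq} are then read off as the areas of the smallest and largest rectangles delimited by those lines. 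So the plan is right, but the piece you would need to supply to make it self-contained is exactly the content of that cited lemma.
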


\begin{proof}
	
	By \cite[Lemma 3.6]{borga2019square}, square permutations with regular projections anchored at $z_0\in (\delta_n,n-\delta_n)$ have points which are contained between the lines
	\begin{itemize}
		\item $x+y = z_0 \pm cn^{.6}$; 
		\item $x+y = 2n - z_0 \pm cn^{.6}$;
		\item $x-y = z_0 \pm cn^{.6}$;
		\item $y-x = z_0 \pm cn^{.6}$.	
	\end{itemize}
	The upper and lower bounds of \eqref{regeq} are given by the area of the smallest and largest rectangle given by these bounded lines. 
	
	The existence of $\varepsilon>0$ such that $|I(\sigma)|\geq \varepsilon n\delta_n$, for every $z_0\in (\delta_n,n-\delta_n)$ and every $\sigma \in \regperm(z_0)$, is a consequence of \eqref{regeq}.
\end{proof}

Note that, for $\sigma \in \irr$, we have the bound
\begin{equation}\label{irreq}
0\leq |I(\sigma)| \leq (n+1)^2.
\end{equation}

For insertion sequences, the number of possible insertions at each step in the sequence increases. 

\begin{lemma}\label{insert_growth}
	Let $\sigma\in\mathcal{S}_n$. Let $\sigma'$ be obtained by the insertion in $\sigma$ of some point in $I(\sigma)$.  Then 
	$$|I(\sigma)| \leq |I(\sigma')| \leq |I(\sigma)| + 2n.$$  
\end{lemma}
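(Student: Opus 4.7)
The plan is to fix the inserted point $p=(a,b)$ in the coordinates of $\sigma'$ and to set up the shift map $\iota: [n+1]^2 \to [n+2]^2$ given by
\[
\iota(i,j) \coloneqq \bigl(i+\mathds{1}_{i\geq a},\ j+\mathds{1}_{j\geq b}\bigr).
\]
This map is injective, with image $[n+2]^2 \setminus \mathcal{C}$, where $\mathcal{C}=(\{a\}\times[n+2])\cup([n+2]\times\{b\})$ is the ``cross'' at $p$. Crucially, the insertion induces a bijection between the points of $\sigma$ and the points of $\sigma'\setminus\{p\}$ that is compatible with $\iota$: for every $(i,j)\in[n+1]^2$, the points of $\sigma$ in any given open quadrant around $(i,j)$ correspond, via the insertion shift, to the points of $\sigma'\setminus\{p\}$ in the same quadrant around $\iota(i,j)$.

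For the lower bound, if $(i,j)\in I(\sigma)$ then each of the four quadrants around $\iota(i,j)$ in $\sigma'$ already contains a shifted point of $\sigma$, hence $\iota(i,j)\in I(\sigma')$; in particular $|I(\sigma)|\leq |I(\sigma')|$. For the upper bound, the key step is to show that $I(\sigma')\cap \iota([n+1]^2) = \iota(I(\sigma))$, that is, no ``extra'' internal insertion of $\sigma'$ arises in the image of $\iota$. Suppose $(i^*,j^*)=\iota(i,j)\in I(\sigma')$: the point $p$ lies in exactly one open quadrant $Q$ around $(i^*,j^*)$, whose location is determined by whether $i\geq a$ and whether $j\geq b$; in the three other quadrants the points of $\sigma'$ are precisely the shifted points of $\sigma$ in the corresponding quadrants of $(i,j)$. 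Those three quadrants of $(i,j)$ in $\sigma$ are therefore non-empty, and it remains to force the remaining quadrant $Q$ to be non-empty as well. This is where the internality of $p$ in $\sigma'$ enters: for instance, when $Q$ is the upper-left quadrant (so that $i\geq a$ and $j<b$), the non-emptiness of the upper-left quadrant of $p$ in $\sigma'$ supplies a point of $\sigma$ with column $<a\leq i$ and value $\geq b>j$, which lies in the upper-left quadrant of $(i,j)$. The remaining three cases are symmetric.

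To conclude, any internal insertion must lie in the interior $\{2,\dots,n+1\}^2$ of $[n+2]^2$, since boundary positions automatically have an empty quadrant; moreover, the internality of $p$ forces $a,b\in\{2,\dots,n+1\}$, so that $|\mathcal{C}\cap\{2,\dots,n+1\}^2|=2n-1$. Combining with the previous paragraph yields
\[
|I(\sigma')| = |\iota(I(\sigma))| + |I(\sigma')\cap\mathcal{C}| \leq |I(\sigma)| + 2n-1 \leq |I(\sigma)| + 2n.
\]
The main obstacle is the ``no extras'' step in the second paragraph, which uses in an essential way the hypothesis that the insertion producing $\sigma'$ is internal: without it, the point $p$ could fill an otherwise-empty quadrant of some $(i,j)\notin I(\sigma)$ and create a genuine new internal insertion at $\iota(i,j)$ in the image of $\iota$, weakening the bound.
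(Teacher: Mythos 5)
Your proof is correct and follows the same overall route as the paper's: introduce the shift map $\iota$ on insertion positions, show $\iota(I(\sigma))\subseteq I(\sigma')$ for the lower bound, and observe that the remaining points of $I(\sigma')$ lie on the cross through $p$, of which there are at most $2n$, for the upper bound. Where you add genuine content is the ``no extras'' step: the paper simply asserts, without argument, that every point of $I(\sigma')$ off the cross already comes from $I(\sigma)$, whereas you actually prove the equality $I(\sigma')\cap\iota([n+1]^2)=\iota(I(\sigma))$, and you correctly pinpoint that this is exactly where the hypothesis that $p$ is an \emph{internal} insertion is used (the non-empty quadrant of $p$ in $\sigma'$ supplies the missing quadrant of $(i,j)$ in $\sigma$). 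Your quadrant-chasing for the representative case and the final count $|\mathcal{C}\cap\{2,\dots,n+1\}^2|=2n-1$ are both sound, so the argument is complete and in fact slightly sharper than the stated bound.
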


\begin{proof}
	For the lower bound let $(i,j)$ be a point in $I(\sigma)$ that is inserted into $\sigma$ and let $\sigma' = \insrt(\sigma,(i,j)).$  For $(i',j')\in I(\sigma)$ with  $i'<i$ and $j'<j$ then $(i',j')\in I(\sigma')$. On the other hand, if $(i',j')\in I(\sigma)$ with $i'\geq i$ and/or $j' \geq j,$ the points $(i'+1,j')$, $(i',j'+1)$ or $(i'+1,j'+1)$ are in $I(\sigma')$ depending on whether $i'>i, j'>j$ or both.  Thus for every point in $I(\sigma)$ there is a unique corresponding point in $I(\sigma')$.  All other points of $I(\sigma')$ will have the form $(i,j')$ or $(i',j)$, giving the upper bound since there are at most $2n$ such points.    
\end{proof}

Thus subsequent insertions give the following bounds on the size of $\cJ(\sigma,k).$

\begin{lemma}\label{insert_bound}
	Let $\sigma\in\mathcal{S}_n$ and $k\in\Z_{>0}$. It holds that
	\begin{equation} \label{insertion_sequence_bound}
		|I(\sigma)|^k \leq |\cJ(\sigma,k)| \leq ( |I(\sigma)| + 2nk + k^2)^k.
	\end{equation} 
	
\end{lemma}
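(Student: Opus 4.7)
The plan is to obtain both bounds by iterating Lemma \ref{insert_growth} along an insertion sequence. Recall that an element of $\cJ(\sigma,k)$ is an ordered sequence $J = ((i_1,j_1),\ldots,(i_k,j_k))$ such that, setting $\sigma^0 = \sigma$ and $\sigma^\ell = \insrt(\sigma^{\ell-1},(i_\ell,j_\ell))$, one has $(i_\ell,j_\ell)\in I(\sigma^{\ell-1})$ for every $\ell=1,\ldots,k$. So $|\cJ(\sigma,k)|$ is counted by the product, over the $k$ steps, of the number of internal insertions available at step $\ell$, where this number depends on the choices made earlier.

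For the lower bound, the plan is to use the monotonicity half of Lemma \ref{insert_growth}: any internal insertion $(i,j)$ applied to some $\tau \in \mathcal{S}_m$ yields $\tau'$ with $|I(\tau')| \geq |I(\tau)|$. Iterating this starting from $\sigma$, every intermediate permutation $\sigma^{\ell-1}$ has at least $|I(\sigma)|$ internal insertions available, and hence the number of ways to extend a partial sequence at every step is at least $|I(\sigma)|$. Multiplying over the $k$ steps yields $|\cJ(\sigma,k)|\geq |I(\sigma)|^k$.

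For the upper bound, I would use the quantitative half of Lemma \ref{insert_growth}, applied to $\sigma^{\ell-1}\in \mathcal{S}_{n+\ell-1}$: it gives
\[
|I(\sigma^\ell)| \;\leq\; |I(\sigma^{\ell-1})| + 2(n+\ell-1).
\]
By induction on $\ell$ this yields
\[
|I(\sigma^{\ell-1})| \;\leq\; |I(\sigma)| + 2\sum_{m=0}^{\ell-2}(n+m) \;=\; |I(\sigma)| + 2n(\ell-1) + (\ell-1)(\ell-2),
\]
which for $\ell\leq k$ is bounded above by $|I(\sigma)| + 2nk + k^2$. Since at step $\ell$ there are at most $|I(\sigma^{\ell-1})|$ choices for $(i_\ell,j_\ell)$, taking the product over $\ell=1,\ldots,k$ gives $|\cJ(\sigma,k)| \leq (|I(\sigma)| + 2nk + k^2)^k$.

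This is a routine induction and no step looks difficult; the only thing to be a little careful about is to apply Lemma \ref{insert_growth} with the correct value of the ambient size $n+\ell-1$ at each step, since the term $2n$ in the statement of that lemma refers to the size of the permutation being inserted into, not to the initial size $n$ of $\sigma$. Bounding $n+\ell-1 \leq n+k$ uniformly over $\ell$ then produces the clean quadratic slack $2nk + k^2$ appearing in \eqref{insertion_sequence_bound}.
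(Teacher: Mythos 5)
Your proof is correct and follows essentially the same route as the paper's: both arguments iterate Lemma~\ref{insert_growth} to bound $|I(\sigma^{\ell-1})|$ uniformly over all intermediate permutations, then multiply over the $k$ steps. The only cosmetic difference is bookkeeping --- the paper bounds the total increment by $i(2n+i-1)\leq 2nk+k^2$, while you compute $2n(\ell-1)+(\ell-1)(\ell-2)$ and bound it by the same quantity; your remark about applying the lemma with ambient size $n+\ell-1$ rather than $n$ is exactly the point the paper's sum $2n+\dots+2(n+i-1)$ encodes.
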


\begin{proof}
	Let $i\leq k$ and $\sigma^i$ be a permutation obtained by the insertions of $i$ internal point in $\sigma$. From Lemma \ref{insert_growth} we have
	$$|I(\sigma)| \leq |I(\sigma^i)| \leq |I(\sigma)| + 2n+\dots+2(n+i-1)=|I(\sigma)| + i(2n+i-1).$$
	Therefore
	$$|I(\sigma)|^k \leq |\cJ(\sigma,k)|  \leq \prod_{i=1}^{k} ( |I(\sigma)| + i(2n+i-1) ).$$
	Noting that $i(2n+i-1)\leq 2nk + k^2$, for all $i\leq k$, we obtain $\prod_{i=1}^{k} ( |I(\sigma)| + i(2n+i-1) )\leq ( |I(\sigma)| + 2nk + k^2)^k$ and we conclude the proof.
\end{proof}

Fix now $\varepsilon >0$ and assume that $|I(\sigma)| \geq \varepsilon n\delta_n$. Then for $k\leq n,$
\begin{equation}\label{eq:boundbound}
(|I(\sigma)| + 2nk +k^2)^k \leq |I(\sigma)|^{k}\left( 1 + \frac{2nk+k^2}{\varepsilon n\delta_n}\right)^k \leq |I(\sigma)|^k\cdot\exp\left(\tfrac{3k^2}{\varepsilon \delta_n}\right),
\end{equation}
where in the last equality we used that $(1+x)\leq e^x$ and $k\leq n$.
Furthermore for $k = o(\sqrt{\delta_n})$,
\begin{equation}
	\label{eq:bound}
	(|I(\sigma)| + 2nk +k^2)^k \leq |I(\sigma)|^k\cdot\exp(3/\varepsilon\cdot o(1)).
\end{equation}

\begin{corollary}\label{jarjar} Let $\sigma\in\mathcal{S}_n,$ $\varepsilon >0$, $k = o(\sqrt n)$. If $|I(\sigma)| \geq \varepsilon n\delta_n$ then
	 $$|\asq(\sigma,k)| \sim \frac{|I(\sigma)|^k}{k!}\;.$$ 
\end{corollary}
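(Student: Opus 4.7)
The plan is to chain together three already-established ingredients: Corollary~\ref{graves_int}, Lemma~\ref{insert_bound}, and the elementary estimate \eqref{eq:bound}. No new idea will be required, and the conclusion will follow essentially by rearrangement.

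First I would invoke Corollary~\ref{graves_int} to obtain the identity $|\asq(\sigma,k)| = |\cJ(\sigma,k)|/k!$, since every $\pi \in \asq(\sigma,k)$ is reached from its exterior $\sigma$ by exactly $k!$ internal insertion sequences. Then I would apply Lemma~\ref{insert_bound} to sandwich the numerator as
\[
|I(\sigma)|^k \;\leq\; |\cJ(\sigma,k)| \;\leq\; (|I(\sigma)| + 2nk + k^2)^k.
\]
At this point it suffices to show that the upper bound is $(1+o(1))|I(\sigma)|^k$.

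Next I would check that the hypotheses place us squarely in the regime where \eqref{eq:bound} applies. Since $k = o(\sqrt{n})$, the choice $\delta_n = \max\{\sqrt{n}k,\,n^{0.9}\}$ permitted by \eqref{eq:cond_dn} gives $k = o(\sqrt{\delta_n})$. Together with the standing assumption $|I(\sigma)| \geq \varepsilon n \delta_n$, the derivation leading to \eqref{eq:bound} yields
\[
(|I(\sigma)| + 2nk + k^2)^k \;\leq\; |I(\sigma)|^k \cdot \exp\!\left(\tfrac{3}{\varepsilon}\cdot o(1)\right) \;=\; (1+o(1))\, |I(\sigma)|^k.
\]
Combining this with the lower bound gives $|\cJ(\sigma,k)| \sim |I(\sigma)|^k$, and dividing through by $k!$ delivers the stated asymptotic.

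There is no real obstacle: the one piece of analysis demanding any thought is already packaged into \eqref{eq:bound} in the text preceding the corollary. The only mild point to watch is that the $o(1)$ appearing in the exponential must be uniform in $\sigma$, so that the asymptotic equivalence does not silently depend on the particular permutation chosen. This is automatic, however, because the bound on $(|I(\sigma)| + 2nk + k^2)^k/|I(\sigma)|^k$ depends on $\sigma$ only through the inequality $|I(\sigma)| \geq \varepsilon n \delta_n$, so uniformity over all $\sigma$ satisfying the hypothesis is built in.
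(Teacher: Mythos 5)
Your proposal is correct and follows exactly the paper's route: invoke Corollary~\ref{graves_int} to reduce to bounding $|\cJ(\sigma,k)|$, sandwich it via Lemma~\ref{insert_bound}, and absorb the error term using \eqref{eq:bound} together with the observation that $k=o(\sqrt n)$ forces $k=o(\sqrt{\delta_n})$ under \eqref{eq:cond_dn}. Your added remark on uniformity in $\sigma$ is a welcome precision, but the argument is otherwise the same as the paper's.
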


\begin{proof}
	Note that from Corollary \ref{graves_int}, $|\asq(\sigma,k)| = \frac{1}{k!}|\cJ(\sigma,k)|.$ We conclude combining the bounds in \eqref{insertion_sequence_bound} and \eqref{eq:bound} (recalling that from \eqref{eq:cond_dn} we have that $k = o(\sqrt{\delta_n})$ since $k = o(\sqrt n)$).
\end{proof}

We can now prove the main result of this section, that is, when $k=o(\sqrt{n})$ then 
$$|\asqnk| \sim \frac{k!2^{k+1}n^{2k+1}4^{n-3}}{(2k+1)!}.$$

\begin{proof}[Proof of Theorem \ref{approx_size}]
	
	For $\sigma \in \irr$, we have from \eqref{irreq} the rough bounds of $$0\leq |\cJ(\sigma,k)| \leq (n+k)^{2k}.$$  Therefore the contribution from $\irr$ to $\asqnk$ is bounded above by
	
	\begin{equation}\label{eq:irrbound}
		\sum_{\sigma \in \irr}|\asq(\sigma,k)|=\sum_{\sigma\in \irr} \frac{1}{k!}|\cJ(\sigma,k)| \leq \frac{1}{k!}2\delta_n4^n(n+k)^{2k} \leq \frac{1}{k!}2\delta_n4^n2^{2k}n^{2k},
	\end{equation}
	where in the first inequality we also used the bound for $|\irr|$ obtained in \eqref{eq:irr_bound} and in the second inequality the fact that $k=o(\sqrt n)$.

	We now focus on the contribution of 
	\begin{equation*}
	\sum_{\sigma \in \sq(n)\setminus\irr}|\asq(\sigma,k)|=\sum_{z_0 \in (\delta_n,n-\delta_n)} \sum_{\sigma\in\mathcal{R}(z_0)}\frac{1}{k!}|\cJ(\sigma,k)|.
	\end{equation*}
	Using the bounds in Lemma \ref{insert_bound}, we have
	\begin{multline}\label{eq:logbound_1}
	\sum_{z_0 \in (\delta_n,n-\delta_n)} \sum_{\sigma\in\mathcal{R}(z_0)}\frac{1}{k!}|I(\sigma)|^k	\\
	\leq\sum_{\sigma \in \sq(n)\setminus\irr}|\asq(\sigma,k)|\leq\\
	\sum_{z_0 \in (\delta_n,n-\delta_n)} \sum_{\sigma\in\mathcal{R}(z_0)}\frac{1}{k!}\left(|I(\sigma)|+2nk+k^2\right)^k	.
	\end{multline}
	From Lemma \ref{regbound} we know that there exists $\varepsilon>0$ such that $|I(\sigma)|\geq \varepsilon n\delta_n$, for every $z_0\in (\delta_n,n-\delta_n)$ and every $\sigma \in \regperm(z_0)$. Therefore, using \eqref{eq:bound}, the right-hand side of the above inequality is bounded by
	\begin{equation}\label{eq:logbound_2}
	\exp(3/\varepsilon\cdot o(1))\cdot\sum_{z_0 \in (\delta_n,n-\delta_n)} \sum_{\sigma\in\mathcal{R}(z_0)}\frac{1}{k!}|I(\sigma)|^k.	
	\end{equation}
	
	For any fixed anchor $z_0$ in $(\delta_n,n-\delta_n)$, the total number of permutations in $\mathcal{R}(z_0)$ is asymptotically $2\cdot4^{n-3} \cdot (1+o(1))$, where the error term is uniform in $z_0$ (again this follows from the classical bounds for Petrov conditions, see for instance the proof of \cite[Lemma 3.4]{borga2019square}). Using this result and the estimate in Lemma \ref{regbound}, we obtain that
	\begin{equation}\label{reg_contribution}
	\sum_{z_0 \in (\delta_n,n-\delta_n)} \sum_{\sigma\in\mathcal{R}(z_0)}\frac{1}{k!}|I(\sigma)|^k\sim \frac{1}{k!}2^{k+1}n^{2k+1}4^{n-3} \int_0^1 (t(1-t))^kdt,
	\end{equation}
	where we used the standard Riemann integral approximation with the substitution $z_0=\lfloor nt \rfloor$ for $t\in(0,1)$.
	Combining \eqref{eq:logbound_1}, \eqref{eq:logbound_2} and \eqref{reg_contribution}, we conclude that
	\begin{equation}\label{eq:keyapprox}
	\sum_{\sigma \in \sq(n)\setminus\irr}|\asq(\sigma,k)|\sim \frac{1}{k!}2^{k+1}n^{2k+1}4^{n-3} \int_0^1 (t(1-t))^kdt .
	\end{equation}
	
	The integral in \eqref{eq:keyapprox} evaluates to $(k!)^2/(2k+1)!$. Thus combining this with the (negligible) contribution from $\irr$ (since $\delta_n=o(n)$) we have
	\begin{equation*}
	|\asqnk|\sim\frac{k!}{(2k+1)!}2^{k+1}n^{2k+1}4^{n-3}.\qedhere
	\end{equation*}
\end{proof}

A consequence of the proof of \eqref{eq:keyapprox} is the following
\begin{corollary}\label{cor:asympt}
	For $k=o(\sqrt n)$ and $s\in(0,1)$,
	\begin{equation*} 
	\sum_{z_0 \in (\delta_n,ns)}|\asq(\mathcal{R}(z_0),k)|\sim \frac{1}{k!}2^{k+1}n^{2k+1}4^{n-3} \int_0^s (t(1-t))^kdt .
	\end{equation*}
\end{corollary}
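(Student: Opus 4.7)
The plan is to observe that the proof of Theorem \ref{approx_size} is essentially \emph{local} in the anchor $z_0$: the sandwich bounds used to compare $|\asq(\mathcal{R}(z_0),k)|$ to the leading order $\tfrac{1}{k!}\cdot 2\cdot 4^{n-3}\cdot(2z_0(n-z_0))^k$ depend only on the regime $z_0 \in (\delta_n, n-\delta_n)$, and in particular apply uniformly to any sub-range of this interval. Since $\delta_n = o(n)$ and $s \in (0,1)$, for $n$ large enough we have $ns < n - \delta_n$, so the range $(\delta_n, ns)$ lies inside $(\delta_n, n-\delta_n)$.

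First I would restrict the summation in the chain of inequalities \eqref{eq:logbound_1} to the range $z_0 \in (\delta_n, ns)$, obtaining
\begin{equation*}
\sum_{z_0 \in (\delta_n,ns)} \sum_{\sigma\in\mathcal{R}(z_0)}\frac{1}{k!}|I(\sigma)|^k \leq \sum_{z_0 \in (\delta_n, ns)}|\asq(\mathcal{R}(z_0),k)| \leq \sum_{z_0 \in (\delta_n,ns)} \sum_{\sigma\in\mathcal{R}(z_0)}\frac{1}{k!}\bigl(|I(\sigma)|+2nk+k^2\bigr)^k.
\end{equation*}
Then, exactly as in the proof of Theorem \ref{approx_size}, I invoke Lemma \ref{regbound} (which gives $|I(\sigma)|\geq \varepsilon n\delta_n$ uniformly on $\mathcal{R}(z_0)$ for $z_0 \in (\delta_n, n-\delta_n)$) together with the estimate \eqref{eq:bound} to squeeze the upper bound within a multiplicative factor $\exp(3/\varepsilon\cdot o(1)) \to 1$ of the lower bound.

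Next I would apply the two uniform approximations recalled in the proof of Theorem \ref{approx_size}: the cardinality $|\mathcal{R}(z_0)| \sim 2\cdot 4^{n-3}$, uniformly in $z_0 \in (\delta_n,n-\delta_n)$, together with $|I(\sigma)|\sim 2 z_0(n-z_0)$ from \eqref{regeq}. This turns the remaining sum into
\begin{equation*}
\sum_{z_0 \in (\delta_n,ns)} \sum_{\sigma\in\mathcal{R}(z_0)}\frac{1}{k!}|I(\sigma)|^k \sim \frac{2^{k+1}\cdot 4^{n-3}}{k!} \sum_{z_0 \in (\delta_n,ns)} z_0^k (n-z_0)^k,
\end{equation*}
and a standard Riemann-sum approximation with the substitution $z_0 = \lfloor nt\rfloor$ (the portion $z_0 \in (0,\delta_n]$ being negligible since $\delta_n = o(n)$) converts this into $\tfrac{1}{k!}2^{k+1}n^{2k+1}4^{n-3}\int_0^s (t(1-t))^k dt$, giving the claim.

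There is no serious obstacle: every ingredient—the Petrov regularity, the $|I(\sigma)|$ estimate, the uniform count $|\mathcal{R}(z_0)|\sim 2\cdot 4^{n-3}$, and the Riemann approximation—has already been established and used verbatim in the proof of Theorem \ref{approx_size}. The only mild point to verify is that all error terms remain uniform when one restricts $z_0$ to $(\delta_n,ns)$ rather than $(\delta_n,n-\delta_n)$, which is immediate from the uniformity statements.
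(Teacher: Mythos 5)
Your proposal is correct and takes essentially the same route as the paper, which proves the corollary only implicitly by remarking it is ``a consequence of the proof of \eqref{eq:keyapprox}''. You spell this out faithfully: the sandwich \eqref{eq:logbound_1}--\eqref{eq:logbound_2}, the lower bound $|I(\sigma)|\geq \varepsilon n\delta_n$ from Lemma~\ref{regbound}, the uniform count $|\mathcal{R}(z_0)|\sim 2\cdot 4^{n-3}$, and the Riemann-sum step are all uniform over $z_0\in(\delta_n,n-\delta_n)$, so restricting the outer sum to $z_0\in(\delta_n,ns)$ (which, for large $n$, lies inside that window) simply replaces $\int_0^1$ by $\int_0^s$, as required.
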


We finally investigate the case when $k=o(n),$ proving that
\begin{equation*}
\log\left(|\asq(n,k)|\right)=\log\left(\frac{k!}{(2k+1)!}2^{k+1}n^{2k+1}4^{n-3}\right)+o(k).
\end{equation*}

\begin{proof}[Proof of Theorem \ref{approx_size_2}] Similarly as before, from \eqref{eq:irrbound}, \eqref{eq:logbound_1}, \eqref{eq:boundbound}, \eqref{reg_contribution} and $ \int_0^1 (t(1-t))^kdt=\frac{(k!)^2}{(2k+1)!}$ we have that
	\begin{align*}
	|\asq(n,k)|&=\sum_{\sigma \in \sq(n)\setminus\irr}|\asq(\sigma,k)|+\sum_{\sigma \in \irr}|\asq(\sigma,k)|\\
	&\leq\exp\left(\frac{3k^2}{\varepsilon\delta_n}\right)\cdot\frac{k!}{(2k+1)!}2^{k+1}n^{2k+1}4^{n-3} (1+o(1)).
	\end{align*}
    Applying the logarithm we obtain
	\begin{equation*}
	\log\left(|\asq(n,k)|\right)\leq
	\frac{3k^2}{\varepsilon\delta_n}+\log\left(\frac{k!}{(2k+1)!}2^{k+1}n^{2k+1}4^{n-3}\right) +o\left(1\right),
	\end{equation*}
	and so
	\begin{equation}\label{eq:morebpunds}
	\frac{\log\left(|\asq(n,k)|\right)-\log\left(\frac{k!}{(2k+1)!}2^{k+1}n^{2k+1}4^{n-3}\right)}{k}\leq \frac{\frac{3k^2}{\varepsilon\delta_n}+o(1)}{k}.
	\end{equation}
	On the other hand, using again \eqref{eq:logbound_1}, \eqref{reg_contribution} and  and $ \int_0^1 (t(1-t))^kdt=\frac{(k!)^2}{(2k+1)!}$, we have 
	\begin{equation*}
	|\asq(n,k)|\geq\sum_{\sigma \in \sq(n)\setminus\irr}|\asq(\sigma,k)|\geq
	\frac{k!}{(2k+1)!}2^{k+1}n^{2k+1}4^{n-3} (1+o(1))
	\end{equation*}
	and so
	\begin{equation*}
	\frac{\log\left(|\asq(n,k)|\right)-\log\left(\frac{k!}{(2k+1)!}2^{k+1}n^{2k+1}4^{n-3}\right)}{k}\geq \frac{o(1)}{k}.
	\end{equation*}
	Putting together the last bound with the bound in \eqref{eq:morebpunds} and recalling that when $k=o(n)$ then $\delta_n$ is a sequence such that $k=o(\delta_n)$ we obtain the desired result.
\end{proof}

\section{The permuton limit of almost square permutations}
\label{sect:permutons}
We recall the minimal notions on permutons limits that we need for this section. For a complete introduction to permutons see \cite[Section 2]{bassino2017universal}.

A \emph{permuton} $\mu$ is a Borel probability measure on the unit square $[0,1]^2$ with uniform marginals, that is
\[
\mu( [0,1] \times [a,b] ) = \mu( [a,b] \times [0,1] ) = b-a,
\]
for all $0 \le a \le b\le 1$. Any permutation $\sigma$ of size $n \ge 1$ may be interpreted as a permuton $\mu_\sigma$ given by the sum of Lebesgue area measures
\begin{equation}
\label{eq:perdef}
\mu_\sigma= n \sum_{i=1}^n \Leb\big([(i-1)/n, i/n]\times[(\sigma(i)-1)/n,\sigma(i)/n]\big).
\end{equation}

Let $\mathcal M$ be the set of permutons. We recall that a sequence of (deterministic) permutons $(\mu_n)_n$ converges \emph{weakly} to $\mu$ (simply denoted $\mu_n \to \mu$) if 
$$
\int_{[0,1]^2} f d\mu_n \to \int_{[0,1]^2} f d\mu,
$$
for every bounded and continuous function $f: [0,1]^2 \to \mathbb{R}$. With this topology, $\mathcal M$ is compact and metrizable by the metric $d_{\square}$ defined, for every pair of permutons $(\mu,\mu'),$ by
$$d_{\square}(\mu,\mu')=\sup_{R\in\mathcal{R}}|\mu(R)-\mu'(R)|,$$
where $\mathcal R$ denotes the set of rectangles contained in $[0,1]^2.$ 

The convergence for random permutations is defined as follows.

\begin{definition}
	We say that a random permutation $\bm{\sigma}_n$ converges in distribution to a random permuton $\bm{\mu}$ as $n \to \infty$ if the random permuton $\mu_{\bm{\sigma}_n}$ converges in distribution to $\bm{\mu}$ with respect to the topology defined above.  
\end{definition}

\subsection{Permuton convergence for square permutations with a fixed number of internal points}
\label{sect:muz}
We prove in this section Theorem \ref{fixedk_thm}. We recall here the rigorous construction of the permuton $\mu^z$ mentioned in the introduction.

Let $z$ be a point in $[0,1]$.  Let $L_1$ and $L_4$ denote the line segments with slope $-1$ connecting $(0,z)$ to $(z,0)$ and $(1-z,1)$ to $(1,1-z)$, respectively.  Similarly let $L_2$ and $L_3$ denote the line segments with slope $1$ connecting $(0,z)$ to $(1-z,1)$ and $(z,0)$ to $(1,1-z)$, respectively.  The union of $L_1$, $L_2$, $L_3$ and $L_4$ forms a rectangle in $[0,1]^2.$  
For each of the line segments $L_i$ ($i=1,2,3$, or $4$) we will define a measure $\mu^z_i$ as a rescaled Lebesgue measure. Let $\nu$ be the Lebesgue measure on $[0,1]$.  Let $S$ be a Borel measurable set on $[0,1]^2$.  For each $i$, let $S_i = S\cap L_i$.  Finally let $\pi_x(S_i)$ be the projection of $S_i$ onto the $x$-axis and $\pi_y(S_i)$ the projection onto the $y$-axis.  As each line has slope $1$ or $-1$, the measures of the projections satisfy $\nu(\pi_x(S_i)) = \nu(\pi_y(S_i)).$  For each $i=1,2,3,4$, define $\mu^z_i(S) := \frac{1}{2} \nu( \pi_x( S_i ) ) = \frac12 \nu( \pi_y(S_i)).$
Finally we define the measure $\mu^z = \mu^z_1+\mu^z_2+\mu^z_3+\mu^z_4.$ The measure $\mu^z$ is a permuton (see \cite[Lemma 4.2]{borga2019square}).

\bigskip

Before proving our main result we need two technical lemmas.

\begin{lemma}\label{lem:nochanges2}
	Let $\sigma$ be a permutation of size $n$ and $\sigma'$ be a permutation obtained from $\sigma$ by adding a point (not necessarily internal) to the diagram of $\sigma$. Then
	$$d_{\square}(\mu_{\sigma},\mu_{\sigma'})\leq\frac 6 n.$$
\end{lemma}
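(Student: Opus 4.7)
Fix an arbitrary rectangle $R \subseteq [0,1]^2$; the task is to bound $|\mu_\sigma(R) - \mu_{\sigma'}(R)|$ by $6/n$ uniformly in $R$. Using the cell decomposition in \eqref{eq:perdef}, write $\mu_\sigma = \sum_{i=1}^n \nu_i$ and $\mu_{\sigma'} = \sum_{j=1}^{n+1} \nu'_j$, where each $\nu_i$ (resp.\ $\nu'_j$) is a uniform measure of total mass $1/n$ (resp.\ $1/(n+1)$) on an axis-aligned square of side $1/n$ (resp.\ $1/(n+1)$). Writing $\sigma'$ as the insertion of a single point $(p,q)$ into $\sigma$ induces a natural bijection between the $n$ cells of $\mu_\sigma$ and $n$ of the $n+1$ cells of $\mu_{\sigma'}$: corresponding cells are translates of each other up to the rescaling $1/n \to 1/(n+1)$, with both coordinate shifts of size at most $1/n$. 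The leftover $\sigma'$-cell is centered at the inserted point and contributes mass $1/(n+1)$.

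For each pair of corresponding cells I would classify the contribution to $\mu_\sigma(R) - \mu_{\sigma'}(R)$ as follows. If both cells sit entirely inside $R$, the contribution is $\tfrac{1}{n} - \tfrac{1}{n+1} = \tfrac{1}{n(n+1)}$; if both sit entirely outside $R$, the contribution is $0$; and if either cell straddles the boundary of $R$, the contribution is bounded in absolute value by $\max(1/n, 1/(n+1)) \leq 1/n$. Summed over the at-most-$n$ pairs of the first type the total is at most $1/(n+1)$, and the unpaired $\sigma'$-cell contributes at most $1/(n+1)$. The heart of the estimate is therefore to count the number of pairs in the boundary case.

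The key structural observation is that, since $\sigma$ and $\sigma'$ are permutations, each horizontal or vertical line of $[0,1]^2$ intersects the interior of at most one cell of $\mu_\sigma$ (respectively, $\mu_{\sigma'}$), because each row and each column contains exactly one point of the permutation. Applied to the four sides of $R$, this caps the number of boundary pairs by a small absolute constant. Putting the three contributions together, and carefully avoiding double-counting cells that straddle two different edges of $R$ or that coincide with the inserted cell, I would arrive at an estimate of the form $(C_1 + C_2)/n$, which should be tightened to the claimed $6/n$.

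The main obstacle is exactly this bookkeeping. Tightening the constant to $6$ requires a delicate case analysis on how the edges of $R$ cross the two distinct cell grids of sizes $1/n$ and $1/(n+1)$, and on the position of the inserted point $(p,q)$ relative to $R$. I expect that splitting the argument along the $4$ edges of $R$, and invoking the bijection-based observation to argue that each edge introduces at most a bounded number of straddling cells on either side, will be the most transparent route to the final constant.
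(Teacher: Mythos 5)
Your high-level strategy is sound and it does rest on the same key structural fact as the paper's proof, namely that each vertical or horizontal line meets the interior of at most one cell of a permuton $\mu_\sigma$, so that $\partial R$ can cut into at most four cells of each measure. However, your route is organizationally different: you set up a cell-by-cell bijection between $\mu_\sigma$ and $\mu_{\sigma'}$ and sum the paired discrepancies, whereas the paper introduces a common reference quantity $\ell/n$ (with $\ell$ the number of points of $\sigma$ falling in $R$), bounds $|\mu_\sigma(R)-\ell/n|$ and $|\mu_{\sigma'}(R)-\ell/n|$ separately, and applies the triangle inequality. The paper's device avoids having to track how individual cells move under the rescaling $1/n\to 1/(n+1)$, which is precisely where your bookkeeping gets delicate.

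That said, as written your argument is incomplete, and you say so yourself: ``the heart of the estimate is to count the number of pairs in the boundary case,'' and you never carry that count out. If one does it with the bounds you have on hand, a $\sigma$-cell straddling $\partial R$ and a $\sigma'$-cell straddling $\partial R$ each number at most $4$, so the number of ``boundary pairs'' is at most $8$; each contributes at most $1/n$, and together with the $\leq 1/(n+1)$ from the fully-interior pairs and $\leq 1/(n+1)$ from the unpaired inserted cell, this yields roughly $10/n$, not $6/n$. I do not see how the ``delicate case analysis'' you defer would shrink this to $6/n$ without a genuinely new observation; the overlap between the two families of straddling cells is not controlled by what you have written. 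To be fair, the precise constant is immaterial for the application in Lemma~\ref{lem:nochanges} (any bound of the form $C/n$ would do, since it is multiplied by $k=o(n)$), so the real issue is not the $6$ but the fact that the counting step is asserted rather than performed. I would either complete the count honestly and state the constant you actually get, or switch to the paper's reference-quantity trick, which produces a clean constant with less accounting.
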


\begin{proof}
	Fix  a rectangle $R\subset [0,1]^2$. Recall that by definition $\mu_{\sigma}$ is the permuton induced by the sum of area measures on points of $\sigma$ scaled to fit within $[0,1]^2$ (see \eqref{eq:perdef}). Suppose that there are $\ell$ points of $\sigma$ contained in $R$. Therefore, keeping track of the possible area measures intersecting the boundaries of $R$, we have that $|\mu_{\sigma}(R)-\frac \ell n|\leq \frac{2}{n}$. Now, noting that the addition of one point to the diagram of $\sigma$, can change the number of points inside $R$ by at most 2, we obtain that $|\mu_{\sigma'}(R)-\frac \ell n|\leq \frac{4}{n}.$ Therefore $|\mu_{\sigma}(R)-\mu_{\sigma'}(R)|\leq \frac{6}{n}$. Since the latter bound does not depend on the choice of $R$ we can conclude the proof.
\end{proof}

In \cite[Lemma 4.3]{borga2019square} we showed that for $\sigma_n\in Sq(n)\setminus\irr$ the permutons, $\mu_{\sigma_n}$ and $\mu^{z_n}$ with $z_n = \sigma_n^{-1}(1)/n,$ have distance $d_{\square}(\mu_{\sigma_n},\mu^{z_n})$ that tends to zero as $n$ tends to infinity, uniformly over all choices of $\sigma_n$. We prove here that the same result holds for permutations in $ASq(Sq(n)\setminus\irr,k)$ whenever $k=o(n)$.

\begin{lemma} \label{lem:nochanges}
	Let $k=o(n)$. The following limit holds
	\begin{equation*}
		\sup_{\sigma_n\in ASq(Sq(n)\setminus\irr,k)}d_{\square}(\mu_{\sigma_n},\mu^{z_n})\to 0.
	\end{equation*}
	
\end{lemma}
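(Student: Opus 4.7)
The plan is to triangulate through the exterior square permutation. Given $\sigma_n \in ASq(Sq(n)\setminus\irr, k)$, set $\tau := \ext(\sigma_n)$, which lies in $Sq(n) \setminus \irr$ by the very definition of $ASq(Sq(n)\setminus\irr, k)$, and write $\bar z_n := \tau^{-1}(1)/n$ and $z_n := \sigma_n^{-1}(1)/(n+k)$. By definition there exists an internal insertion sequence $J$ of length $k$ with $\sigma_n = \insrt(\tau, J)$. The triangle inequality gives
$$
d_{\square}(\mu_{\sigma_n}, \mu^{z_n}) \leq d_{\square}(\mu_{\sigma_n}, \mu_\tau) + d_{\square}(\mu_\tau, \mu^{\bar z_n}) + d_{\square}(\mu^{\bar z_n}, \mu^{z_n}),
$$
and it suffices to show that each of the three terms tends to $0$ uniformly in $\sigma_n$.

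The middle term is handled directly by \cite[Lemma 4.3]{borga2019square}, which provides exactly the required uniform convergence over $\tau \in Sq(n)\setminus\irr$. For the first term, let $\tau = \tau^0, \tau^1, \ldots, \tau^k = \sigma_n$ be the intermediate permutations produced by applying the insertions of $J$ one step at a time. Iterating Lemma \ref{lem:nochanges2} and applying the triangle inequality yields
$$
d_{\square}(\mu_{\sigma_n}, \mu_\tau) \leq \sum_{i=0}^{k-1}\frac{6}{n+i} \leq \frac{6k}{n},
$$
which is $o(1)$ under the hypothesis $k = o(n)$, and the bound is manifestly uniform in the choice of $\sigma_n$. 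For the last term, observe that the global minimum remains external (it is a record) throughout the $k$ internal insertions, so its column can shift by at most $k$; writing $\sigma_n^{-1}(1) = \tau^{-1}(1) + m$ with $0 \leq m \leq k$ gives
$$
|z_n - \bar z_n| = \frac{|nm - k\tau^{-1}(1)|}{n(n+k)} \leq \frac{k}{n+k}.
$$
Combined with the Lipschitz continuity of the map $z \mapsto \mu^z$ in the $d_\square$ metric -- which is immediate from the explicit four-segment construction of $\mu^z$ recalled at the start of Section \ref{sect:muz}, since a small change in $z$ translates the supporting rectangle by a comparable amount and thus changes the measure of any test rectangle by at most $O(|z - z'|)$ -- this forces $d_{\square}(\mu^{\bar z_n}, \mu^{z_n}) \to 0$ uniformly as well.

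The argument is essentially a chain of triangle inequalities combined with ingredients already on the page, so no deep obstacle arises. The only point requiring care is verifying that each of the three error estimates is genuinely uniform in $\sigma_n$; this is automatic since every invoked bound depends only on $n$ and $k$ and not on the specific choice of exterior permutation or insertion sequence used to build $\sigma_n$.
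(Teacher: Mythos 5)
Your proof is correct and follows essentially the same strategy as the paper: triangulate through the exterior square permutation $\ext(\sigma_n)$, control $d_\square(\mu_{\sigma_n},\mu_{\ext(\sigma_n)})$ by iterating Lemma~\ref{lem:nochanges2} to get the $O(k/n)$ bound, and cite \cite[Lemma 4.3]{borga2019square} for $d_\square(\mu_{\ext(\sigma_n)},\mu^{\bar z_n})$. The paper in fact stops with a two-term triangle inequality (implicitly taking $z_n=\ext(\sigma_n)^{-1}(1)/n$), so your third term and the accompanying Lipschitz estimate for $z\mapsto\mu^z$ is a small extra step that makes the choice of normalization for $z_n$ explicit and harmless, which is a welcome bit of added care.
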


\begin{proof}
	We have the following bound for every $\sigma_n\in ASq(Sq(n)\setminus\irr,k)$
	$$d_{\square}(\mu_{\sigma_n},\mu^{z_n})\leq d_{\square}(\mu_{\sigma_n},\mu_{\ext(\sigma_n)})+d_{\square}(\mu_{\ext(\sigma_n)},\mu^{z_n})$$
	that translates into
	$$\sup_{\sigma_n\in ASq(Sq(n)\setminus\irr,k)}d_{\square}(\mu_{\sigma_n},\mu^{z_n})\leq \sup_{\sigma_n\in ASq(Sq(n)\setminus\irr,k)}d_{\square}(\mu_{\sigma_n},\mu_{\ext(\sigma_n)})+\sup_{\sigma_n\in Sq(n)\setminus\irr}d_{\square}(\mu_{\sigma_n},\mu^{z_n}).$$
	The second term in the right-hand side of the above equation tends to zero thanks to the aforementioned \cite[Lemma 4.3]{borga2019square} and the first term tends to zero because the addition of $k=o(n)$ internal points cannot modify the permuton limit of a sequence of permutations, as shown in Lemma \ref{lem:nochanges2}.
\end{proof}

We can now prove the main result of this section, that is, if $k>0$ is fixed and $\bm\sigma_n$ is uniform in $\asqnk$, then $\mu_{\bm{\sigma}_n} \stackrel{d}{\longrightarrow} \mu^{\zz^{(k)}}$ as $n\to \infty$.

\begin{proof}[Proof of Theorem \ref{fixedk_thm}]

With the asymptotic formula for the cardinality of $\asqnk$ (obtained in Theorem \ref{approx_size}) and Corollary \ref{cor:asympt}  we can determine the distribution for the value of $\bm \sigma^{-1}_n(1)$, for a uniform permutation $\bm{\sigma}_n$ in $\asqnk$ when $k$ is fixed. Specifically,
\begin{equation}\label{coral}
\prob(\bm{\sigma}_n^{-1}(1) \leq ns ) \sim \frac{ \frac{1}{k!}2^{k+1}n^{2k+1}4^{n-3}\int_0^s (t(1-t))^kdt}{\frac{k!}{(2k+1)!}2^{k+1}n^{2k+1}4^{n-3}} =  (2k+1){2k \choose k}\int_{0}^s(t(1-t))^kdt,
\end{equation}
where we used again the fact that the contribution to the numerator of permutations in $\irr$ is negligible w.r.t. the cardinality of $\asqnk$ (see \eqref{eq:irrbound}). Therefore $\zz_n=\frac{\bm{\sigma}_n^{-1}(1)}{n}\stackrel{d}{\longrightarrow}\zz^{(k)}.$ 

The map $z\to\mu^z$ is continuous as a function from $(0,1)$ to $\mathcal{M}$, and thus $\mu^{\zz_n}$ converges in distribution to $\mu^{\zz^{(k)}}.$ By Lemma \ref{lem:nochanges}, and again the fact that $|\irr|$ is negligible w.r.t. $|\asqnk|$ , we also have that $d_{\square}(\mu_{\bm{\sigma}_n},\mu^{\zz_n})$ converges almost surely to zero. Therefore, combining these results,  we can conclude that $\mu_{\bm{\sigma}_n}$ converges in distribution to $\mu^{\zz^{(k)}}.$
\end{proof}

\subsection{Permuton convergence for square permutations with a growing number of internal points}
We prove in this section Theorem \ref{permuton_limit}.
When the number $k$ of internal points tends to infinity, we have the following result.

\begin{lemma}\label{lemm:conc_result}
	Let $k=o(n)$ and assume that $k\to\infty$. Then for $\bm{\sigma}_n$ uniform in $\asqnk$ it holds 
	$$\frac{\bm{\sigma}_n^{-1}(1)}{n}\stackrel{d}{\longrightarrow}1/2.$$
\end{lemma}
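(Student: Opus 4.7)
The plan is to follow the template of the proof of Theorem~\ref{fixedk_thm}, exploiting the fact that the density of $\bm{\sigma}_n^{-1}(1)/n$ should be approximately proportional to $(t(1-t))^k$ and that this beta-like density concentrates at $t=1/2$ as $k\to\infty$. I fix $\varepsilon>0$, set $B_\varepsilon:=[0,\tfrac12-\varepsilon]\cup[\tfrac12+\varepsilon,1]$, and aim to show $\prob(\bm{\sigma}_n^{-1}(1)/n\in B_\varepsilon)\to 0$ as $k\to\infty$, $k=o(n)$.

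The first step is a localized version of Theorem~\ref{approx_size_2}. Repeating its derivation but restricting the outer sum over $z_0$ to $nB_\varepsilon\cap(\delta_n,n-\delta_n)$, Lemmas~\ref{regbound} and~\ref{insert_bound} together with~\eqref{eq:boundbound} yield
\[
\bigl|\bigl\{\sigma\in\asq(n,k):\sigma^{-1}(1)/n\in B_\varepsilon\bigr\}\bigr|\leq \frac{e^{o(k)}}{k!}\cdot 2\cdot 4^{n-3} \sum_{z_0\in nB_\varepsilon\cap(\delta_n,n-\delta_n)} (2z_0(n-z_0))^k + (\text{irregular term}),
\]
and the main sum is comparable via the Riemann-sum approximation of~\eqref{reg_contribution} to $\frac{e^{o(k)}}{k!}\cdot 2^{k+1}n^{2k+1}4^{n-3}\int_{B_\varepsilon}(t(1-t))^k\,dt$. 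The choice $\delta_n=\max(n^{.9},k\log k)$ continues to satisfy~\eqref{eq:cond_dn}, keeping $k^2/\delta_n=o(k)$. For the denominator I use Theorem~\ref{approx_size_2} directly, together with the identity $\int_0^1(t(1-t))^k\,dt=(k!)^2/(2k+1)!$, to rewrite the lower bound as $|\asq(n,k)|\geq e^{-o(k)}\cdot\tfrac{1}{k!}\cdot 2^{k+1}n^{2k+1}4^{n-3}\int_0^1(t(1-t))^k\,dt$. Taking the ratio gives
\[
\prob\!\bigl(\bm{\sigma}_n^{-1}(1)/n\in B_\varepsilon\bigr)\leq e^{o(k)}\cdot\frac{\int_{B_\varepsilon}(t(1-t))^k\,dt}{\int_0^1(t(1-t))^k\,dt}.
\]

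A Laplace-type estimate then closes the argument: on $B_\varepsilon$ one has $t(1-t)\leq \tfrac14-\varepsilon^2$, while Stirling applied to $(k!)^2/(2k+1)!$ gives $\int_0^1(t(1-t))^k\,dt\sim (e/2)\sqrt{\pi/k}\cdot 4^{-k}$, so the ratio of integrals is at most $C\sqrt{k}(1-4\varepsilon^2)^k=C\sqrt{k}\,e^{-c_\varepsilon k}$ with $c_\varepsilon:=-\log(1-4\varepsilon^2)>0$. Since $e^{o(k)}$ grows subexponentially, the product tends to zero, proving the lemma. The main obstacle will be verifying that the irregular remainder in the restricted numerator is truly of order $e^{o(k)}$ against the denominator: the crude bound~\eqref{eq:irrbound} based on~\eqref{irreq} carries a spurious factor of $4^k$ that is harmless only for $k=O(\log n)$. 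I would remove it by sharpening~\eqref{irreq} itself: since the set of internal insertions of any $\sigma\in\sq(n)$ is contained in the convex hull of the four extremal records of $\sigma$, which is a convex quadrilateral with one vertex on each side of the square $[1,n]^2$ and thus of area at most $n^2/2$, one has $|I(\sigma)|\leq n^2/2$ for every $\sigma\in\sq(n)$. Combined with $|\irr|\leq 2\delta_n\cdot 4^n$ and the tighter estimate $(n^2/2+2nk+k^2)^k=(n^2/2)^k e^{O(k^2/n)}=(n^2/2)^k e^{o(k)}$ for $k=o(n)$, the irregular contribution becomes dominated by the regular one after division by the denominator, and the $e^{o(k)}$ bookkeeping closes.
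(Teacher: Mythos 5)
Your core argument matches the paper's own proof: restrict the Riemann-sum approximation to a window of anchors, use Lemmas~\ref{regbound}, \ref{insert_bound} and estimate~\eqref{eq:boundbound} to control $|\cJ(\sigma,k)|$, divide by the lower bound on $|\asq(n,k)|$ coming from the proof of Theorem~\ref{approx_size_2}, and finish with a Laplace-type estimate on the ratio of $\int(t(1-t))^k$ over $B_\varepsilon$ versus $[0,1]$. The paper instead takes the one-sided cutoff $t\le\tfrac12-\lambda$, bounds $(t(1-t))^k\le4^{-k}e^{-4\lambda^2k}$ directly, and invokes the symmetry $\bm\sigma_n^{-1}(1)\stackrel{d}{=}n+1-\bm\sigma_n^{-1}(1)$; your two-sided $B_\varepsilon$ with Stirling applied to $(k!)^2/(2k+1)!$ is the same estimate in different clothes, and both close in the same way.

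Your flag about the irregular term is the genuinely interesting part of the proposal, and you are right that the bookkeeping in \eqref{eq:irrbound} is not innocuous: after dividing by the lower bound for $|\asq(n,k)|$, the factor $(2k+1)!/(k!)^2\sim(2k+1)4^k/\sqrt{\pi k}$ leaves a residual $\sim\sqrt{k}\,2^k\,\delta_n/n$ (and $\sim\sqrt{k}\,8^k\,\delta_n/n$ if one keeps the cruder $2^{2k}n^{2k}$), which does not tend to $0$ once $k\gg\log n$ because $\delta_n\ge n^{.9}$. The paper's ``$(1+o(1))$'' at the end of the argument does not actually absorb this. However, your proposed repair has several problems. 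First, the geometric step is not justified: $I(\sigma)$ lies in the convex hull of \emph{all} $n$ points of $\sigma$ (take one witness in each quadrant), but it need not lie in the convex hull of just the four extremal records; for $i<\min(z_0,\sigma^{-1}(n))$ the internal column is the full range $m_1(i)<j\le M_1(i)$, which can bulge NW of the chord $p_1p_4$ whenever the left-to-right maxima path is concave, so the ``quadrilateral with one vertex per side'' argument does not directly apply. Second, even granting $|I(\sigma)|\le n^2/2$, the ratio becomes $\sim\sqrt{k}\,e^{o(k)}\,\delta_n/n$, which with $\delta_n\ge n^{.9}$ still blows up for $k\gg n^{.2}$; the exponential factor is only part of the problem. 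Third, $\delta_n=\max(n^{.9},k\log k)$ violates $\delta_n=o(n)$ for $k$ near $n$ (e.g.\ $k=n/\log\log n$ gives $k\log k\sim n\log n/\log\log n$). A genuine fix needs a bound tied to the anchor, something like $|I(\sigma)|\le C\,z_0(n-z_0)$ holding for \emph{all} $\sigma\in\sq(n)$ and not only the regular ones: then the contribution of anchors within $\delta_n$ of the boundary decays like $(C'\delta_n/n)^{k}$ rather than merely polynomially, and the $\sqrt{k}4^k$ from the central binomial is cancelled outright.
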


\begin{proof} Note that for every $0<\lambda<\frac 1 2,$
	\begin{equation}\label{eq:startingpoint}
		\prob(\bm{\sigma}_n^{-1}(1) \leq n(1/2 - \lambda))\leq\frac{\sum_{z_0 \in (\delta_n,n(1/2 - \lambda))} \sum_{\sigma\in\mathcal{R}(z_0)}\frac{1}{k!}|\cJ(\sigma,k)|+\sum_{\sigma \in \irr}|\asq(\sigma,k)|}{|\asqnk|}.
	\end{equation}
	We focus on the term $\sum_{z_0 \in (\delta_n,n(1/2 - \lambda))} \sum_{\sigma\in\mathcal{R}(z_0)}\frac{1}{k!}|\cJ(\sigma,k)|$. From Lemma \ref{regbound} we know that $|I(\sigma)|\geq \varepsilon n\delta_n$. Therefore, using Lemmas \ref{regbound} and \ref{insert_bound}, the estimates in \eqref{eq:boundbound} and the fact that $|{R}(z_0)|\leq 2^{2n-5}$, we obtain
	\begin{equation*}
	\sum_{z_0 \in (\delta_n,n(1/2 - \lambda))} \sum_{\sigma\in\mathcal{R}(z_0)}\frac{1}{k!}|\cJ(\sigma,k)|\leq\frac{1}{k!}2^{2n-5}\exp\left(\tfrac{3k^2}{\varepsilon \delta_n}\right)\sum_{z_0 \in (\delta_n,n(1/2 - \lambda))}\left(	2(z_0 + cn^{.6})(n-z_0 +cn^{.6})\right)^k.
	\end{equation*}
	We also have the following asymptotic estimate 
	\begin{multline*}
	\frac{1}{k!}2^{2n-5}\exp\left(\tfrac{3k^2}{\varepsilon \delta_n}\right)\sum_{z_0 \in (\delta_n,n(1/2 - \lambda))}\left(	2(z_0 + cn^{.6})(n-z_0 +cn^{.6})\right)^k\\
	\sim\frac{1}{k!}2^{2n-5}\exp\left(\tfrac{3k^2}{\varepsilon \delta_n}\right)2^kn^{2k+1}\int_{0}^{1/2-\lambda}(t(1-t))^kdt,
	\end{multline*}
	where we used again the standard Riemann integral approximation with the substitution $z_0=\lfloor nt \rfloor$ for $t\in(0,1/2-\lambda)$.
	Noting that for $t \leq 1/2 - \lambda$ we have $(t(1-t))^k \leq 4^{-k}(1-4\lambda^2)^k \leq 4^{-k}e^{-4\lambda^2k}$, from the two equations above we obtain that
	\begin{equation}\label{eq:furtherbound}
	\sum_{z_0 \in (\delta_n,n(1/2 - \lambda))} \sum_{\sigma\in\mathcal{R}(z_0)}\frac{1}{k!}|\cJ(\sigma,k)|\leq \frac{1}{k!}2^{2n-5}\exp\left(\tfrac{3k^2}{\varepsilon \delta_n}\right)2^kn^{2k+1}4^{-k}e^{-4\lambda^2k}(1+o(1)).
	\end{equation}
	Using that $|\asqnk| \sim \frac{k!2^{k+1}n^{2k+1}4^{n-3}}{(2k+1)!}$ and the bound  in \eqref{eq:irrbound} we can conclude from \eqref{eq:startingpoint} and \eqref{eq:furtherbound} that
	\begin{equation*}
		\prob(\bm{\sigma}_n^{-1}(1) \leq n(1/2 - \lambda))\leq\frac{(2k+1)e^{-4\lambda^2k}}{\sqrt{k \pi}}\exp\left(\tfrac{3k^2}{\varepsilon \delta_n}\right)(1+o(1)).
	\end{equation*}
	Therefore we can conclude that $\prob(\bm{\sigma}_n^{-1}(1) \leq n(1/2 - \lambda))\to0$, for every $0<\lambda<\frac 1 2$.
	
	Since $\bm{\sigma}_n^{-1}(1)\stackrel{d}{=}n+1-\bm{\sigma}_n^{-1}(1)$ then the probability that $\prob(\bm{\sigma}_n^{-1}(1) \geq n(1/2 + \lambda))$ is also equally small and this concludes the proof.
\end{proof}

\begin{proof}[Proof of Theorem \ref{permuton_limit}]
	The proof is identical to the proof of Theorem \ref{fixedk_thm} above, using the concentration result for $\bm{\sigma}_n^{-1}(1)$ obtained in Lemma \ref{lemm:conc_result}.
\end{proof}

 \section{Insertions in 321-avoiding permutations}
 \label{sect:321av}

Permutations in $\avn(321)$ are in bijection with Dyck paths of size $2n$.  A Dyck path of size $2n$ is a path with two types of steps: $(1,1)$ or $(1,-1)$, that is conditioned to start at $(0,0),$ end at $(2n,0)$, and remain non-negative in between.  There are many possible bijections to choose from between these two sets.  One particular bijection comes from \cite{BJS}, which we refer to as the Billey--Jockusch--Stanley (or BJS) bijection.  For a Dyck path, $\gamma_n,$ of size $2n$, we let $\tau_n = \tau_{\gamma_n}$ denote the corresponding permutation in $\avn(321)$ under the BJS-bijection.  In the other direction, for a permutation $\tau_n \in \avn(321)$ we let $\gamma_n = \gamma_{\tau_n}$ denote the corresponding Dyck path under the inverse bijection.  
This bijection is used in \cite{hoffman2017pattern} to show that the points of a permutation that avoid a decreasing sequence of size three converge to the Brownian excursion when properly scaled.

Specifically, extend the definition of the permutation $\tau_n$ so that $\tau_n(0)=0$ and for $t\in [0,1]$, let 
$$F_{\tau_n}(t) \coloneqq \frac{1}{\sqrt{2n}}\big |\tau_n(\lfloor nt \rfloor) - \lfloor nt \rfloor \big|.$$  
 
\begin{theorem}[Theorem 1.2 in \cite{hoffman2017pattern}]\label{chacha}
Let $\bm{\tau}_n$ be a uniformly random permutation in $\avn(321)$.  Then
$$\left(F_{\bm{\tau}_n}(t)\right)_{t\in [0,1]} \stackrel{d}{\longrightarrow} \left(\bm{e}_t\right)_{t\in [0,1]},$$
where $\bm{e}_t$ is the Brownian excursion on $[0,1]$ and the convergence holds in the space of right-continuous functions $D([0,1],\mathbb{R})$.
\end{theorem}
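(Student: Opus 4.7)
The plan is to reduce the statement to the classical invariance principle for uniformly random Dyck paths, using the BJS bijection introduced at the start of Section~\ref{sect:321av}. Since BJS is a bijection between $\avn(321)$ and Dyck paths of length $2n$, a uniformly random $\bm{\tau}_n$ produces a uniformly random Dyck path $\bm{\gamma}_n$ of length $2n$. The whole theorem therefore boils down to relating the position-minus-value profile of $\bm{\tau}_n$ to the height profile of $\bm{\gamma}_n$, and then invoking the well-known scaling limit for the latter.

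The central combinatorial step is the identity
$$|\tau(i) - i| \;=\; h_\gamma(2i) \qquad \text{for all } i \in [n],$$
where $\gamma = \gamma_\tau$ is the Dyck path associated to $\tau$ under BJS and $h_\gamma(j)$ denotes the height of $\gamma$ after $j$ steps. This is a folklore fact about BJS, verifiable by a direct induction on $i$: the two steps of $\gamma$ at times $2i-1$ and $2i$ encode, respectively, whether $i$ is a position where a new left-to-right maximum occurs in $\tau$ and whether the value $i$ has already appeared as some $\tau(j)$ with $j\leq i$. Since the points of a $321$-avoiding permutation split into an increasing subsequence weakly above the diagonal and an increasing subsequence strictly below, the cumulative balance of up-steps over down-steps through time $2i$ measures precisely the unsigned gap between position and value. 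With this identity in hand one obtains
$$F_{\bm{\tau}_n}(t) \;=\; \tfrac{1}{\sqrt{2n}}\, h_{\bm{\gamma}_n}\!\bigl(2 \lfloor nt \rfloor\bigr).$$

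The last step is to apply Kaigh's invariance principle to $\bm{\gamma}_n$: the rescaled Dyck path $\bigl(\tfrac{1}{\sqrt{2n}} h_{\bm{\gamma}_n}(\lfloor 2ns \rfloor)\bigr)_{s\in[0,1]}$ converges in distribution, in the uniform topology on $C([0,1],\mathbb{R})$ (and hence in the Skorokhod topology on $D([0,1],\mathbb{R})$), to the standard Brownian excursion $(\bm{e}_s)_{s\in[0,1]}$. Specializing at $s=t$ and combining with the display above yields the desired convergence, modulo the error $|h_{\bm{\gamma}_n}(2\lfloor nt\rfloor) - h_{\bm{\gamma}_n}(\lfloor 2nt\rfloor)| \leq 1$, which vanishes after rescaling by $\sqrt{2n}$ by the $1$-Lipschitz property of lattice path heights.

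The main obstacle is isolating and verifying the combinatorial identity $|\tau(i)-i| = h_{\gamma_\tau}(2i)$: the many equivalent presentations of BJS differ in small but annoying conventions (which record type is recorded by an up-step, where the Dyck path starts, etc.), so one must be meticulous about matching them. Once that identity is secured, the analytic machinery is entirely off-the-shelf: Kaigh's theorem for the scaling limit of uniform Dyck paths, together with the Lipschitz control for the timescale mismatch and the continuous mapping theorem applied to $x\mapsto |x|$ (which is a no-op since both sides are already non-negative).
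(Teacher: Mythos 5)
There is a genuine gap: the central combinatorial step you propose --- the exact identity $|\tau(i)-i| = h_{\gamma_\tau}(2i)$ for the BJS bijection --- is false, and no choice of convention will repair it. In the standard BJS encoding, with $E$ the set of weak excedance positions (left-to-right maxima) and $V$ the set of weak excedance values, the height of the Dyck path at an even time satisfies $\gamma(2i) = 2\bigl(|E\cap[i]| - |V\cap[i]|\bigr)$, which is always even, while $|\tau(i)-i|$ is frequently odd. A concrete counterexample: $\tau = 2413 \in Av_4(321)$ has $|\tau(1)-1| = 1$ but $\gamma(2)=2$. More fundamentally, the displacement $|\tau(i)-i|$ and the height $\gamma(2i)$ are two distinct functionals of the excedance data whose difference is itself a random fluctuation of polynomial order. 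This is exactly why Lemma~\ref{flux} (Lemma~2.7 of \cite{hoffman2017pattern}) states the relationship only up to an additive error $10n^{.4}$, and only for permutations satisfying the Petrov conditions. The proof that $\bm\tau_n$ satisfies the Petrov conditions with overwhelming probability is itself a nontrivial component of the argument; it is not something one can sidestep.

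Your proposed analytic pipeline --- invoke Kaigh's theorem for the scaled Dyck path, then transfer --- is the right one and matches the approach cited by the paper, but the transfer step is where the real work lives. You cannot reduce the timescale/value mismatch to a $1$-Lipschitz lattice-path bound and a continuous-mapping argument: the error between $F_{\bm\tau_n}(t)$ and $\tfrac{1}{\sqrt{2n}}\gamma_{\bm\tau_n}(2\lfloor nt\rfloor)$ is $O(n^{.4}/\sqrt{n}) = O(n^{-.1})$, which does vanish, but establishing that $O(n^{.4})$ bound requires the concentration estimates encoded in the Petrov conditions (Definition~\ref{defn:petrov} and its Dyck-path analogue). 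The correct skeleton is: (i) show that the Petrov conditions fail with probability at most $Ce^{-n^\delta}$; (ii) prove Lemma~\ref{flux}, i.e.\ $\bigl||\tau_n(j)-j| - \gamma_n(2j)\bigr| \le 10n^{.4}$ on the good event; (iii) conclude that $\sup_t \bigl|F_{\bm\tau_n}(t) - \tfrac{1}{\sqrt{2n}}\gamma_{\bm\tau_n}(2\lfloor nt\rfloor)\bigr| \to 0$ in probability; and only then (iv) apply Kaigh's invariance principle and Slutsky. Steps (i)--(ii) are the content your proof is missing.
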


The main step in the proof of Theorem \ref{chacha} is showing that the function $F_{\bm{\tau}_n}(t)$ is often close to the corresponding scaled Dyck path $\gamma_{\bm{\tau}_n}$, which converges in distribution to the Brownian excursion \cite{Ka76}.  The proof uses an alternative version of the Petrov conditions stated in terms of Dyck paths. We denote the Petrov conditions for Dyck paths with $PC'$ and the Petrov conditions for permutations used in this paper with $PC$ (see Definition \ref{defn:petrov}). $PC'$ can be translated to permutations obtaining a slightly modified version of $PC$. We say $\tau_n\in \avn(321)$ satisfies $PC'$ if, using the BJS bijection, the corresponding Dyck path, $\gamma_n$, satisfies $PC'$.

In what follow we say that $\tau_n\in \avn(321)$ satisfies the Petrov conditions if it satisfies both $PC$ and $PC'$ (and we do the same for Dyck paths). The exact version of $PC'$ is not important for our results, though we point out that a uniform random permutation in $\avn(321)$ has exponentially small probability to satisfy only one set of conditions among $PC$ and $PC'$. 
This together with Corollary 5.5 and Proposition 5.6 and 5.7 of \cite{hoffman2017pattern}, implies that there exist positive constants $C$ and $\delta$ such that the probability that the Petrov conditions are not satisfied for a uniform random permutation $\bm \tau_n$ in $\avn(321)$ is bounded above by $Ce^{-n^\delta}$. 

\begin{lemma}[Lemma 2.7 in \cite{hoffman2017pattern}]\label{flux}
	Let $\gamma_n$ be a Dyck path of size $2n$ that satisfies the Petrov conditions, and let $\tau_n$ be the corresponding permutation in $\avn(321)$.  If $(j,\tau_n(j))$ is a left-to-right maximum, then 
	$$|\tau_n(j) - j - \gamma_n(2j)| \leq  10n^{.4},$$
	and if $(j,\tau_n(j))$ is a right-to-left minimum, then 
	$$| \tau_n(j) - j  + \gamma_n(2j)| \leq 10n^{.4}.$$
	Therefore for all $j\leq n$, 
	$$\gamma_n(2j)- 10n^{.4} \leq | \tau_n(j) - j| \leq \gamma_n(2j) + 10n^{.4}.$$
\end{lemma}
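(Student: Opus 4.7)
The strategy is to unpack the Billey--Jockusch--Stanley bijection and show that, for a $321$-avoiding permutation $\tau_n$ of size $n$, the vertical distance of $\tau_n(j)$ from the diagonal at column $j$ is determined, up to small fluctuations, by the height of the Dyck path $\gamma_n$ at time $2j$. Since $\tau_n$ avoids $321$, every index $j$ is either a left-to-right maximum (in which case $\tau_n(j)\geq j$) or a right-to-left minimum (in which case $\tau_n(j)\leq j$), so it suffices to treat the two cases separately and then combine them to obtain the third inequality of the lemma.

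First I would write down the BJS bijection in its matching (open-tokens) formulation: each up-step of $\gamma_n$ creates a token, each down-step closes an open token according to a first-in/first-out rule, and the pair of steps indexed by $j$ determines whether $\tau_n(j)\geq j$ or $\tau_n(j)\leq j$. In this formulation the height $\gamma_n(2j)$ is literally the number of open tokens at time $2j$. A direct case check then yields an exact identity of the form $\tau_n(j)-j = \gamma_n(2j) + \Delta_j^{+}$ when $(j,\tau_n(j))$ is a left-to-right maximum, and symmetrically $j-\tau_n(j) = \gamma_n(2j) + \Delta_j^{-}$ when it is a right-to-left minimum, where $\Delta_j^{\pm}$ counts up- or down-steps of $\gamma_n$ occurring in a small window around position $2j$ that are "rearranged" by the pairing relative to the naive count.

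Second I would bound $|\Delta_j^{\pm}|$ using the Petrov conditions. Both $PC'$ (for the Dyck path) and $PC$ (for the induced permutation) control the number of up- or down-steps, respectively of left-to-right maxima or right-to-left minima, inside any window of size less than $n^{0.6}$ to within $n^{0.4}$ of its expected value. Applying this to the finitely many correction terms produced in step one yields $|\Delta_j^{\pm}|\leq 10n^{0.4}$, which gives the first two inequalities of the lemma; the last inequality then follows by taking absolute values and noting that every $j$ falls into at least one of the two cases. The main obstacle is step one: the BJS bijection admits several equivalent descriptions, and only the matching/open-token viewpoint makes the identification of $|\tau_n(j)-j|$ with $\gamma_n(2j)$ transparent; once this identity is in place, the Petrov-based estimation in step two is routine, since both sets of Petrov conditions have been tailored to control precisely the fluctuations that appear in $\Delta_j^{\pm}$.
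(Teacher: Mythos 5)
The paper does not prove this statement: it is imported verbatim as Lemma~2.7 of \cite{hoffman2017pattern}, so there is no in-paper proof to compare against. Judging your argument on its own terms, the overall shape (exact identity from the BJS bijection, then Petrov control of the error) is the right idea, but there is a genuine gap.

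The identity the bijection actually supplies is not of the form ``$\gamma_n(2j)$ equals $\tau_n(j)-j$ up to corrections in a fixed small window.'' For a left-to-right maximum $(j,\tau_n(j))$, the Dyck path passes \emph{exactly} through height $\tau_n(j)-j+1$ at the shifted time $t_j:=j+\tau_n(j)-1$, not at time $2j$; the quantity you call $\Delta_j^{+}$ is the increment of $\gamma_n$ over the interval $[2j,t_j]$, whose length is $\tau_n(j)-j-1$. Describing this as ``finitely many correction terms'' in a ``small window'' glosses over the fact that this window has length comparable to the height itself, which for a Petrov path can be of order $n^{.6}$. To legitimately invoke the small-window Petrov estimate (fluctuation $<n^{.4}$ on windows of length $<n^{.6}$), you must first show that the window is that short, i.e.\ that $\tau_n(j)-j=O(n^{.6})$. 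That is a separate step: the Petrov conditions force $\sup_i\gamma_n(i)\lesssim n^{.6}$, and the exact identity $\gamma_n(t_j)=\tau_n(j)-j+1$ then bounds $\tau_n(j)-j$. Your write-up never establishes this a priori bound; without it, the only applicable Petrov estimate on the interval $[2j,t_j]$ is the $|i-j|^{.6}$-type bound for long windows, which does not yield $10n^{.4}$. The missing idea, concretely, is: use Petrov to bound the maximum height of $\gamma_n$, deduce that the correction window $[2j,t_j]$ has length $O(n^{.6})$, and only then apply the small-window Petrov estimate to the increment of $\gamma_n$ over it (and symmetrically for right-to-left minima).
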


Let $M(\gamma_n)  = \max_{1\leq j\leq n}\gamma_n(2j)$ be the maximum of $\gamma_n$  and $D(\tau_n) = \max_{1\leq j \leq n} | \tau_n(j) - j|$ be the maximum absolute displacement.

\begin{corollary}\label{dyck_max}
	Let $\tau_n$ be a permutation in $\avn(321)$ and let $\gamma_n = \gamma_{\tau_n}$ be the corresponding Dyck path of size $2n$.    If $\tau_n$ (and thus $\gamma_n$) satisfies the Petrov conditions, then $D(\tau_n) \leq M(\gamma_n) + 10n^{.4}.$
\end{corollary}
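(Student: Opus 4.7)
The plan is to observe that Corollary \ref{dyck_max} is an essentially immediate consequence of the second inequality asserted in Lemma \ref{flux}. That lemma, applied under the standing hypothesis that $\tau_n$ (hence $\gamma_n$) satisfies the Petrov conditions, gives the pointwise bound
\[
|\tau_n(j) - j| \leq \gamma_n(2j) + 10 n^{.4} \qquad \text{for every } 1 \leq j \leq n.
\]
So the first step I would carry out is simply to quote this upper half of Lemma \ref{flux} verbatim, after noting that the hypothesis of the corollary is exactly the hypothesis required by that lemma.

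The second and final step is to take the maximum over $j \in \{1,\dots,n\}$ on both sides of the displayed inequality. The left-hand side becomes $\max_{1 \leq j \leq n} |\tau_n(j) - j| = D(\tau_n)$ by definition, and on the right-hand side the constant $10n^{.4}$ is independent of $j$, so the maximum passes onto $\gamma_n(2j)$, yielding $\max_{1 \leq j \leq n} \gamma_n(2j) + 10n^{.4} = M(\gamma_n) + 10n^{.4}$. Combining these identifications gives exactly $D(\tau_n) \leq M(\gamma_n) + 10 n^{.4}$, as required.

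There is no real obstacle here; the content of the corollary has already been absorbed into Lemma \ref{flux}, and the only thing to verify is that one is allowed to pass the maximum through the inequality, which is immediate since the bound holds pointwise in $j$. The only minor care needed is to make sure that the indices match: Lemma \ref{flux} is stated for $1 \leq j \leq n$, which matches the indexing used in the definitions of $M(\gamma_n)$ and $D(\tau_n)$, so no reindexing or boundary adjustment is needed.
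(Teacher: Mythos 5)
Your argument is correct and is exactly the intended one: the paper states Corollary \ref{dyck_max} immediately after Lemma \ref{flux} with no written proof, precisely because it follows by taking the maximum over $j$ in the last displayed inequality of that lemma, as you do. Nothing further is needed.
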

  
Let $E^+(\tau_n)$ denote the set of left-to-right maxima of $\tau_n$ and $E^-(\tau_n)$ the complement of $E^+(\tau_n)$ (thus the points of $E^-(\tau_n)$ are all the right-to-left minima of $\tau_n$ that are not fixed points).  For $1\leq i \leq n$ let 
$$i^+ = \max_{j\leq i} \{ j: (j,\tau_n(j)) \in E^+(\tau_n)\}.$$  Similarly let $$i^- = 
\min_{j\geq i} \{ j: (j,\tau_n(j)) \in E^-(\tau_n)\},$$ 
with the exception that if $i$ is a fixed point, then $i^-=i$.  

\begin{lemma}\label{sandwich}
	Let $\tau_n\in \avn(321)$ satisfy the Petrov conditions.  For $1\leq i \leq n$,
	$$\Big| |\tau_n(i^+) - i^+ | - |\tau_n(i) - i| \Big| < 25n^{.4},$$
	and 
	$$\Big| |\tau_n(i^-) - i^-| - |\tau_n(i) - i| \Big|< 25n^{.4}.$$
\end{lemma}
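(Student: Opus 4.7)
The plan is to convert both displacements into values of the Dyck path $\gamma_n$ via Lemma~\ref{flux}, then control the resulting Dyck-path increment using the Petrov conditions $PC'$. The second inequality follows from the first by symmetry: applying the argument to the reverse-complement $\tau_n^{rc}(j)\coloneqq n+1-\tau_n(n+1-j)$ (which is again in $\avn(321)$ and preserves the Petrov conditions) interchanges $E^+$ with $E^-$ and sends $i^+$ to $(n+1-i)^-$, so I focus on the first inequality.

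If $i\in E^+(\tau_n)$ then $i^+=i$ and the bound is immediate. Otherwise $i\in E^-(\tau_n)$, and the $321$-avoidance hypothesis, combined with the definition of $i^+$, forces every position in $(i^+,i]$ to lie in $E^-(\tau_n)\setminus E^+(\tau_n)$. Applying Lemma~\ref{flux} at the left-to-right maximum $i^+$ and at the right-to-left minimum $i$ yields
$$\bigl|\,|\tau_n(i^+)-i^+|-\gamma_n(2i^+)\,\bigr|\le 10 n^{.4}, \qquad \bigl|\,|\tau_n(i)-i|-\gamma_n(2i)\,\bigr|\le 10 n^{.4},$$
so by the triangle inequality it suffices to prove $|\gamma_n(2i^+)-\gamma_n(2i)|\le 5 n^{.4}$.

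To bound this increment I would first bound the window size $i-i^+$. Since every $j\in(i^+,i]$ is not a left-to-right maximum, $\tau_n(j)<\max_{\ell<j}\tau_n(\ell)=\tau_n(i^+)$; specialising to $j=i$ and substituting the Lemma~\ref{flux} estimates on both sides gives
$$i-i^+\le \gamma_n(2i)+\gamma_n(2i^+)+20 n^{.4}\le 2 M(\gamma_n)+20 n^{.4}.$$
The Petrov conditions $PC'$ force $M(\gamma_n)=O(n^{.6})$, so $2(i-i^+)=O(n^{.6})$, and the H\"older-type branch of $PC'$ (analogous to condition~(2) of Definition~\ref{defn:petrov}) gives $|\gamma_n(2i^+)-\gamma_n(2i)|\le C\,(2(i-i^+))^{.6}\le C' n^{.36}<5 n^{.4}$ for $n$ large. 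Combining with the previous step yields the target $25 n^{.4}$.

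The main obstacle will be extracting, from the (not explicitly stated) $PC'$, a precise quantitative bound on Dyck-path increments over windows of size $O(n^{.6})$; fortunately, the gap between the $22 n^{.4}$ that my argument actually produces and the $25 n^{.4}$ stated in the lemma leaves ample slack for any reasonable constants that arise there, as well as for the lower-order terms coming from the two applications of Lemma~\ref{flux}.
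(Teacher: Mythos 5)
Your proposal is correct and reaches the same bound, but handles the key step --- controlling the window size $i-i^+$ --- by a genuinely different route. The paper imports Lemma~2.5 of \cite{hoffman2017pattern}, which says that under the Petrov conditions any interval of length $n^{.3}$ contains both a left-to-right maximum and a right-to-left minimum, so $\max\{i-i^+,\,i^--i\}\le n^{.3}$; the single quoted consequence of $PC'$ (``window $<2n^{.6}$ implies increment $<n^{.4}$'') then closes the estimate. You instead derive $i-i^+<\gamma_n(2i)+\gamma_n(2i^+)+20n^{.4}\le 2M(\gamma_n)+20n^{.4}=O(n^{.6})$ from $\tau_n(i)<\tau_n(i^+)$ and two applications of Lemma~\ref{flux}, which is self-contained (no appeal to the cited Lemma~2.5) but gives a weaker bound that lands outside the regime where the paper's quoted form of $PC'$ applies. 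You therefore need the H\"older branch of $PC'$, a piece of the Dyck-path Petrov conditions the paper deliberately leaves unstated; your guess that it mirrors condition~(2) of Definition~\ref{defn:petrov} is reasonable and the arithmetic ($O(n^{.36})<n^{.4}$) leaves ample slack, but the paper's route is more economical in what it assumes about $PC'$. Two small notes: one should also cover the case $2(i-i^+)\le n^{.3}$ using the small-window branch, and the reverse-complement reduction for the $i^-$ case, while sound in spirit, needs a sentence of care around fixed points (the paper simply runs the symmetric estimate directly, which is shorter).
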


\begin{proof}
	
By \cite[Lemma 2.5]{hoffman2017pattern}, any interval of length $n^{.3}$ must contain both a point in $E^+(\tau_n)$ and $E^-(\tau_n)$, and therefore 
\begin{equation}\label{eq:swedish}
\max\{i-i^+, i^- -i\} \leq n^{.3}.
\end{equation}
By the Petrov conditions for Dyck paths, if $|x-y| < 2n^{.6}$ then $|\gamma_n(x) - \gamma_n(y)|< n^{.4}.$  Therefore both $|\gamma_n(2i^+) - \gamma_n(2i)| < n^{.4}$ and $|\gamma_n(2i^-) - \gamma_n(2i)| < n^{.4}$.  By Lemma \ref{flux},
 \begin{align*}
	\Big|| \tau_n(i^+) - i^+| - |\tau_n(i) - i|\Big| & \leq |\gamma_n(2i^+) + 10n^{.4} - \gamma_n(2i) + 10n^{.4}|\\
	& \leq |\gamma_n(2i^+) - \gamma_n(2i)| + 20n^{.4}\\
	&< 25n^{.4},
\end{align*}
and similarly, 
 \begin{align*}
	\Big|| \tau_n(i^-) - i^-| - |\tau_n(i) - i|\Big| & \leq |\gamma_n(2i^-) + 10n^{.4} - \gamma_n(2i) + 10n^{.4}|\\
	& \leq |\gamma_n(2i^-) - \gamma_n(2i)| + 20n^{.4}\\
	&< 25n^{.4}.
\end{align*}
This ends the proof.
\end{proof}

\begin{lemma}\label{historique}
Let $\tau_n$ be a permutation in $\avn(321)$ that satisfies the Petrov conditions.  Then 
$$\frac{1}{(2n)^{3/2}}|I(\tau_n)| = \int_0^1 F_{\tau_n}(t) dt + O(n^{-.1}).$$	
\end{lemma}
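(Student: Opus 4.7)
The plan is to expand $|I(\tau_n)|$ column-by-column as a sum of ``interior widths'' of $\tau_n$, identify each width with $2|\tau_n(i) - i|$ up to an $O(n^{.4})$ error, and recognize the resulting sum as a Riemann sum for $(2n)^{3/2}\int_0^1 F_{\tau_n}(t)\,dt$. A point $(i,j)\in [n+1]^2$ is an internal insertion for $\tau_n$ if and only if $\max(a(i-1),b(i)) < j \le \min(A(i-1),B(i))$, where $A(k) := \max_{\ell\le k}\tau_n(\ell)$, $a(k) := \min_{\ell\le k}\tau_n(\ell)$, $B(k) := \max_{\ell\ge k}\tau_n(\ell)$, $b(k) := \min_{\ell\ge k}\tau_n(\ell)$, with conventions $A(0) = B(n+1) = 0$ and $a(0) = b(n+1) = n+1$. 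Hence
\[
|I(\tau_n)| = \sum_{i=1}^{n+1} w(i), \qquad w(i) := \bigl(\min(A(i-1),B(i)) - \max(a(i-1),b(i))\bigr)_+.
\]

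The key step is to show $w(i) = 2|\tau_n(i) - i| + O(n^{.4})$ outside an exceptional set of at most $O(n^{.4})$ columns. Since $\tau_n$ is 321-avoiding, its left-to-right maxima form an increasing sequence, so $A(i-1) = \tau_n((i-1)^+)$, where $(i-1)^+$ is the largest left-to-right max position $\le i-1$; by Lemma~2.5 of \cite{hoffman2017pattern}, $0 \le (i-1) - (i-1)^+ \le n^{.3}$. Combining Lemma~\ref{flux} with the Petrov continuity of $\gamma_n$,
\[
A(i-1) = (i-1)^+ + \gamma_n(2(i-1)^+) + O(n^{.4}) = i + \gamma_n(2i) + O(n^{.4}) = i + |\tau_n(i) - i| + O(n^{.4}),
\]
using Lemma~\ref{flux} again for the last equality. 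A symmetric argument gives $b(i) = i - |\tau_n(i)-i| + O(n^{.4})$. The inequality $B(i) \ge A(i-1)$ fails only when the position $p_n$ of the value $n$ satisfies $p_n < i$; applying Lemma~\ref{flux} at $p_n$ together with $\gamma_n(2n) = 0$ and the Petrov control of $\gamma_n$ near $2n$ forces $n - p_n = O(n^{.4})$. By a mirror argument, $a(i-1) > b(i)$ only when $p_1 \ge i$ with $p_1 = O(n^{.4})$. On each exceptional column we use the trivial bound $w(i) \le n$.

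Summing gives $|I(\tau_n)| = 2\sum_{i=1}^{n} |\tau_n(i)-i| + O(n^{1.4})$. Since $F_{\tau_n}$ equals $|\tau_n(k)-k|/\sqrt{2n}$ on $[k/n,(k+1)/n)$, we have $\sum_{i=1}^{n}|\tau_n(i)-i| = n\sqrt{2n}\int_0^1 F_{\tau_n}(t)\,dt + O(D(\tau_n))$, and by Corollary~\ref{dyck_max} together with the Petrov bound on $M(\gamma_n)$ this error is $o(n^{1.4})$. Dividing by $(2n)^{3/2} = 2n\sqrt{2n}$ yields the claim with remainder $O(n^{-.1})$.

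The main obstacle is the width estimate for $w(i)$: properly handling the $\min$ and $\max$ requires localizing the extremal values $1$ and $n$. The Petrov conditions deliver exactly this localization, confining the ``exceptional'' set where the simplification breaks down to $O(n^{.4})$ columns near the endpoints, whose combined contribution is $O(n^{1.4})$, absorbed into the overall error.
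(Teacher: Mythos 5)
Your proof is correct, and it takes a route that is close in spirit to the paper's but more explicit and, in one respect, more careful. The paper's proof begins by asserting that $(i,j)\in I(\tau_n)$ if and only if $\tau_n(i^-)<j\leq\tau_n(i^+)$, and then invokes Lemma~\ref{sandwich} to estimate $\tau_n(i^+)-\tau_n(i^-)$ by $2|\tau_n(i)-i|+O(n^{.4})$. That ``if and only if'' is actually only approximate: when $i$ itself is a left-to-right maximum one has $\tau_n(i^+)=\tau_n(i)>A(i-1)$, and likewise the conditions $a(i-1)<j$ and $B(i)\geq j$ are not encoded at all; these extra constraints are active only when $i\leq p_1$ or $i>p_n$, which Petrov conditions confine to $O(n^{.4})$ columns, so the paper's conclusion is unaffected even though the claimed exactness is not quite right. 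You instead start from the exact per-column description $\max(a(i-1),b(i))<j\leq\min(A(i-1),B(i))$, show that outside the $O(n^{.4})$ boundary columns the min and max reduce to $A(i-1)$ and $b(i)$, and re-derive $A(i-1)=i+|\tau_n(i)-i|+O(n^{.4})$ and $b(i)=i-|\tau_n(i)-i|+O(n^{.4})$ directly from Lemma~\ref{flux} and the Petrov control of $\gamma_n$ (the content of Lemma~\ref{sandwich}, just unpacked). The Riemann-sum conversion at the end matches the paper, with the boundary term controlled by $D(\bm\tau_n)=O(n^{.6})$ via Corollary~\ref{dyck_max}. So you reach the same $O(n^{-.1})$ error by a different, self-contained decomposition; the paper's version is shorter because it reuses Lemma~\ref{sandwich} and does not track the $B,a$ conditions explicitly.
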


\begin{proof}

Since $(i,j) \in I(\tau_n)$ if and only if $\tau_n(i^-)< j \leq \tau_n(i^+)$ then 
$$|I(\tau_n)| = \sum_{i=1}^n \tau_n(i^+) - \tau_n(i-).$$
For each $i$ we may use Lemma \ref{sandwich} and \eqref{eq:swedish} to obtain the upper bound
\begin{align*}
\tau_n(i^+) - \tau_n(i^-) &= (\tau_n(i^+) -i^+) + (i^+ - i^-) + (i^- - \tau_n(i^-))\\
& \leq |\tau_n(i) - i| + 25n^{.4} + 2n^{.3}+  |\tau_n(i) - i| + 25n^{.4}\\
& \leq 2|\tau_n(i) - i| + 100n^{.4}	
\end{align*}
as well as the lower bound
\begin{equation*}
\tau_n(i^+) - \tau_n(i^-) \geq 2|\tau_n(i) - i| - 100n^{.4}.
\end{equation*}
  
In terms of $F_{\tau_n}(\cdot)$, the above estimate rewrites as $|\tau_n(i^+) - \tau_n(i^-) - 2\sqrt{2n}F_{\tau_n}(i/n)| \leq  100n^{.4}$ and  so 
$$|I(\tau_n)| =  \sum_{i=1}^n \left(2\sqrt{2n}F_{\tau_n}(i/n) + O(n^{.4})\right).$$
For $t\in [\frac i n,\frac{i+1}{n})$, $F_{\tau_n}(t) = F_{\tau_n}(i/n)$ and therefore the above sum can be expressed exactly as an integral plus an error term that is at most $O(n^{1.4})$ giving
$$|I(\tau_n)| = (2n)^{3/2}\int_0^1 F_{\tau_n}(t) dt + O(n^{1.4}).$$
Dividing by $(2n)^{3/2}$ finishes the proof.
\end{proof}

Most permutations in $\avn(321)$ satisfy the Petrov conditions and therefore Lemma \ref{historique} applies to most permutations.  This helps in determining the asymptotic behavior of $\avnk$.

\begin{lemma} \label{city maps}
Fix $k>0$.  Let $\tau_n \in \avn(321)$ satisfy the Petrov conditions.  Then $$\frac{1}{(2n)^{3k/2}}|\cJ(\tau_n,k)|  = \left( \int_0^1 F_{\tau_n}(t) dt \right)^k + O(n^{-.1}) $$
and thus
$$\frac{k!}{(2n)^{3k/2}}|\asq(\tau_n,k)| = \left(\int_0^1 F_{\tau_n}(t)dt \right)^k + O(n^{-.1}).$$	
\end{lemma}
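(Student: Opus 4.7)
The plan is to combine the sandwich bound of Lemma \ref{insert_bound} with the integral estimate of Lemma \ref{historique} and finish with a binomial expansion. First I would observe that the second identity is just the first divided by $k!$: Corollary \ref{graves_int} gives $|\asq(\tau_n,k)|=\frac{1}{k!}|\cJ(\tau_n,k)|$, so once the first identity is in hand the second follows automatically. The whole argument therefore reduces to the first identity.

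To prove it, I would start from
$$|I(\tau_n)|^k \;\leq\; |\cJ(\tau_n,k)| \;\leq\; \bigl(|I(\tau_n)|+2nk+k^2\bigr)^k,$$
divide by $(2n)^{3k/2}$ and abbreviate $A_n := \int_0^1 F_{\tau_n}(t)\,dt$. Lemma \ref{historique} converts the lower bound into $(A_n+O(n^{-.1}))^k$, while the extra piece in the upper bound contributes only $(2nk+k^2)/(2n)^{3/2}=O(n^{-1/2})$, which is absorbed into the $O(n^{-.1})$ term already present. Both sides therefore collapse to $(A_n+O(n^{-.1}))^k$, and expanding by the binomial theorem yields
$$\frac{|\cJ(\tau_n,k)|}{(2n)^{3k/2}} \;=\; A_n^k \,+\, \sum_{j=1}^{k}\binom{k}{j}A_n^{k-j}\cdot O\bigl(n^{-.1\,j}\bigr).$$

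The hard part will be showing that every term in this sum is $O(n^{-.1})$, which reduces to a uniform bound $A_n = O(1)$. I would extract this from the Petrov conditions: Corollary \ref{dyck_max} gives $\sqrt{2n}\,F_{\tau_n}(t) \leq M(\gamma_n)+10n^{.4}$, and the Dyck-path component $PC'$ of the Petrov conditions constrains $M(\gamma_n)$ tightly enough that $F_{\tau_n}$, and hence $A_n$, stays uniformly bounded in $n$ and in the choice of $\tau_n$ satisfying the hypotheses. Once this input is in place the expansion above simplifies to $A_n^k+O(n^{-.1})$, which is precisely the first identity. Without the uniform boundedness of $A_n$, for $k\geq 2$ the cross-terms could grow polynomially in $n$ and ruin the claimed error; this single verification is the only delicate step, everything else being direct substitution of Lemmas \ref{insert_bound} and \ref{historique}.
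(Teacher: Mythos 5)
Your approach mirrors the paper's almost exactly — the paper's proof of this lemma is the one-liner ``This follows exactly from the previous lemma together with Lemma \ref{insert_bound},'' and you have correctly filled in what that means: sandwich $|\cJ(\tau_n,k)|$ between $|I(\tau_n)|^k$ and $(|I(\tau_n)|+2nk+k^2)^k$ via Lemma \ref{insert_bound}, normalize by $(2n)^{3k/2}$, substitute Lemma \ref{historique}, and expand. You also correctly and honestly flag the one nontrivial step: after the binomial expansion, the cross-terms involve $A_n^{k-j}\cdot O(n^{-.1j})$ where $A_n=\int_0^1 F_{\tau_n}(t)\,dt$, and absorbing these into $O(n^{-.1})$ requires controlling $A_n$ uniformly.

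However, the specific claim you make to close this gap — that ``$PC'$ constrains $M(\gamma_n)$ tightly enough that $A_n$ stays uniformly bounded'' by an absolute constant — cannot literally hold. The Petrov conditions are designed so that a \emph{uniformly random} $\tau_n\in\avn(321)$ fails them with probability at most $Ce^{-n^\delta}$, while $\int_0^1 F_{\bm\tau_n}(t)\,dt$ converges in distribution to the Brownian excursion area, whose support is all of $(0,\infty)$. So for any fixed constant $M$ there must exist Petrov-satisfying permutations with $A_n>M$, and a hard constant bound is impossible. What the Petrov conditions actually provide is a sub-polynomial bound — from the local-fluctuation estimates $|\gamma_n(x)-\gamma_n(y)|<n^{.4}$ for $|x-y|<2n^{.6}$ together with Corollary \ref{dyck_max} one can only get something like $M(\gamma_n)=O(n^{.6})$ and hence $A_n=O(n^{.1})$. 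For $k\geq 2$ this is not enough to force the cross-terms down to $O(n^{-.1})$ as stated. This is a real subtlety, but note that it is shared with the paper: the paper's one-sentence proof does not address it, and the lemma's $O(n^{-.1})$ should probably be read as hiding a factor polynomial in $A_n^{k-1}$ (which is what the downstream expectation computation in the proof of Theorem \ref{thin red line} can actually absorb, since $A_n$ has uniformly bounded moments). So your reconstruction of the \emph{route} is faithful, but the step you correctly single out as delicate is not resolved by the argument you sketch, and that step is also glossed over in the source.
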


\begin{proof}
This follows exactly from the previous lemma together with Lemma \ref{insert_bound}. 
\end{proof}

\begin{proof}[Proof of Theorem \ref{thin red line}]

Partition $\avn(321)$ into two sets $A_n$ and $B_n$, where permutations in $A_n$ satisfy the Petrov conditions and permutations in $B_n$ do not.  Let $c_n$ denote the $n$-th Catalan number $\frac{1}{n+1}{2n \choose n} \sim \frac{4^n}{\sqrt{2\pi n^3}}.$  Let $a_n$ and $b_n$ denote the size of $A_n$ and $B_n$ respectively.  For a uniform permutation in  $\avn(321)$, the Petrov conditions fail with probability at most $Ce^{-n^\delta}$ for some $C,\delta > 0$, thus we have that $b_n \leq  Ce^{-n^\delta}c_n$.    

For any $\tau_n \in \avn(321)$ we always have the upper bound $\asq(\tau_n,k) \leq (n+k)^{2k}$. Thus the contribution to $|\avnk|$ from permutations with external points in $B_n$, i.e.\ permutations in $\asq(B_n,k),$ is at most $c_n(n+k)^{2k} Ce^{-n^\delta}\leq c_n (2n)^{2k}Ce^{-n^\delta} = o(c_n)$.  Using Lemma \ref{city maps}, we obtain
\begin{align}
|\avnk| &=  \sum_{\tau_n \in \avn(321)} |\asq(\tau_n,k)|\nonumber \\
&= c_n \cdot \expect\left[ |\asq(\bm{\tau}_n,k)| \right] \nonumber\\
&= \frac{(2n)^{3k/2}c_n}{k!}\expect\left [\left ( \int_0^1 F_{\bm{\tau}_n}(t) dt \right)^k \Bigg | \bm \tau_n \in A_n \right]\prob(\bm\tau_n \in A_n) + o\left(c_n n^{3k/2-.1}\right).\label{carlos}
\end{align}

Using that $\prob(\bm\tau_n\in B_n ) \leq Ce^{-n^{\delta}}$ and $F_{\bm\tau_n}(t) \leq n^{1/2}$, we have 
\begin{equation}
\label{eq:bound1}
\expect\left [\left ( \int_0^1 F_{\bm{\tau}_n}(t) dt \right)^k \Bigg | \bm \tau_n \in B_n \right]\prob(\bm\tau_n\in B_n )\leq  Cn^{k/2}e^{-n^\delta}.
\end{equation}
Rewriting the expectation $\expect\left [\left ( \int_0^1 F_{\bm{\tau}_n}(t) dt \right)^k \right]$ as
\begin{equation}
\label{eq:bound2}
\expect\left [\left ( \int_0^1 F_{\bm{\tau}_n}(t) dt \right)^k \Bigg | \bm \tau_n \in A_n \right] \prob(\bm\tau_n\in A_n )+\expect\left [\left ( \int_0^1 F_{\bm{\tau}_n}(t) dt \right)^k \Bigg | \bm \tau_n \in B_n \right] \prob(\bm\tau_n\in B_n )
\end{equation}
we have that convergence of the $k$-th moment of $(\int_0^1 F_{\bm\tau_n}(t)dt \big | \bm\tau_n\in A_n)$ is equivalent to the convergence of the $k$-th moment of $\int_0^1 F_{\bm\tau_n}(t)dt$. Moreover, if the limits exist, they must agree. Suppose this is the case, then \eqref{carlos} becomes

\begin{equation}\label{dealio}
|\avnk| = \frac{(2n)^{3k/2}c_n}{k!}\expect\left [ \left ( \int_0^1 F_{\bm\tau_n}(t) dt \right)^k\right] + o\left(c_n n^{3k/2-.1}\right).	
\end{equation}

It remains to show the existence of the limit of the $k$-th moment of the area $\int_0^1 F_{\bm\tau_n}(t) dt$. We have the simple upper bound
\begin{equation}\label{upside down}
\int_0^1 F_{\bm{\tau}_n}(t) dt \leq \sup_{t\in [0,1]} F_{\bm\tau_n}(t) = \frac{1}{\sqrt{2n}} D( \bm \tau_n ).
\end{equation}
For each $k>0$ and for $n$ large enough, from Corollary \ref{dyck_max}
\begin{align}
\expect\left [ \left( \int_0^1 F_{\bm\tau_n}(t) dt \right)^k  \Bigg | \bm\tau_n \in A_n \right] &\leq \expect\left[ \left(\frac{1}{\sqrt{2n}}D( \bm\tau_n) \right)^k \Bigg | \bm\tau_n \in A_n \right] \nonumber \\
& \leq  \expect \left [ \left( \frac{1}{\sqrt{2n}}(M(\bm\gamma_n) + 10n^{.4})\right)^k \Bigg | \bm\tau_n \in A_n \right ] \nonumber \\
& \leq \expect\left[ \left( \frac{1}{\sqrt{2n}} M(\bm\gamma_n) \right)^k \right ] \frac{(1 + O(n^{-.1}))}{\prob(\bm\tau_n \in A_n)}\nonumber \\
& \leq \frac{2}{\prob(\bm\tau_n \in A_n)}\expect\left[ \left( \frac{1}{\sqrt{2n}} M(\bm\gamma_n) \right)^k \right ].\nonumber
\end{align}
Therefore, from \eqref{eq:bound1} and \eqref{eq:bound2} we obtain the following bound
\begin{equation*}
\expect\left [\left ( \int_0^1 F_{\bm{\tau}_n}(t) dt \right)^k \right]\leq 2\cdot \expect\left[ \left( \frac{1}{\sqrt{2n}} M(\bm\gamma_n) \right)^k \right ]+Cn^{k/2}e^{-n^\delta}.
\end{equation*}

By \cite[Theorem 1]{Khorunzhiy_Marckert} the exponential moment of $(2n)^{-1/2} M(\bm\gamma_n)$ is uniformly bounded in $n$,  thus for any $k>0$, the $k$-th moment of $\int_0^1 F_{\bm\tau_n}(t)dt$ is uniformly bounded in $n$.  This along with the convergence in distribution of $\int_0^1 F_{\bm\tau_n}(t) dt$ to $\int_0^1 \bm e_tdt$ implies convergence of the $k$-th moments:

\begin{equation}\label{final countdown}
\expect\left [\left(\int_0^1 F_{\bm{\tau}_n}(t) dt \right)^k\right] \longrightarrow \expect\left[ \left( \int_0^1 \bm{e}_t dt\right) ^k\right ]
\end{equation}
(see \cite[Theorem~4.5.2]{ChungP}, for instance). 

Dividing both sides of \eqref{dealio} by $(2n)^{3k/2}c_n/k!$ gives
\begin{equation}
\frac{|\avnk|}{(2n)^{3k/2}c_n/k!} 	 = \expect\left [\left(\int_0^1 F_{\bm{\tau}_n}(t) dt \right)^k\right]  + o(1),
\end{equation}
and letting $n$ tend to infinity finishes the proof.
\end{proof}

We conclude this section proving Theorem \ref{thm:fluctuations}. We recall that for a permutation $\tau_n\in Av_n(321)$ (with the convention that $\tau_n(0)=0$) we defined
\begin{equation}\label{eq:first_def}
F_{\tau_n}(t) \coloneqq \frac{1}{\sqrt{2n}}\big |\tau_n(\lfloor nt \rfloor) - \lfloor nt \rfloor \big|, \quad t\in[0,1].
\end{equation}
We also generalized this definition, by setting, for a permutation $\tau^k_n\in \avnk$ (with the convention that $\tau^k_n(0)=0$), 
\begin{equation}\label{eq:second_def}
F_{\tau^k_n}(t) \coloneqq \frac{1}{\sqrt{2(n+k)}}\big |\tau^k_n(s(t)) - s(t) \big|,\quad  t\in[0,1],
\end{equation}
where $s(t)=\max\left\{m\leq \lfloor (n+k)t \rfloor|\tau^k_n(m)\text{ is an external point}\right\}$. Note that, for permutations in $ Av_n(321)$, the definition given in \eqref{eq:second_def} coincides with the definition given in \eqref{eq:first_def}.

We need the following technical result.

\begin{lemma}\label{lem:fluctuations}Let $Reg_n^k$ be the set of permutations in $\avnk$ such that the exterior satisfies the Petrov conditions. As $n\to\infty,$
	$$\sup_{\tau^k_n\in Reg^k_n}||F_{\tau^k_n}(t)-F_{\ext(\tau^k_n)}(t)||_{\infty}\to 0,$$
where, for a function $f:[0,1]\to\R$, we denote $||f||_{\infty}=\sup_{t\in[0,1]}|f(t)|$.
\end{lemma}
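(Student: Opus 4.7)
The plan is to track, point by point, how the insertion of $k$ internal points perturbs the positions, values and normalizations used to define $F_{\tau_n^k}$ versus $F_{\ext(\tau_n^k)}$. Write $\sigma_n = \ext(\tau_n^k) \in \avn(321)$ and let $c_1 < \dots < c_n$ denote the columns of the external points of $\tau_n^k$ inside $\{1,\dots,n+k\}$ and $r_1 < \dots < r_n$ the rows of those external points. The bijection between the $p$-th external point in $\tau_n^k$ and the $p$-th point in $\sigma_n$ yields $\tau_n^k(c_p) = r_{\sigma_n(p)}$. Because there are only $k$ internal columns (resp.\ rows) that get interlaced with the external ones, one immediately gets the uniform bounds $0 \leq c_p - p \leq k$ and $0 \leq r_q - q \leq k$.

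Next I would unpack the definition of $F_{\tau_n^k}(t)$. Setting $p^*(t) = \max\{p : c_p \leq \lfloor(n+k)t\rfloor\}$, one has $s(t) = c_{p^*(t)}$ and $\tau_n^k(s(t)) = r_{\sigma_n(p^*(t))}$, so
\[
\bigl|\tau_n^k(s(t)) - s(t)\bigr| = \bigl|(\sigma_n(p^*(t)) - p^*(t)) + (r_{\sigma_n(p^*(t))} - \sigma_n(p^*(t))) - (c_{p^*(t)} - p^*(t))\bigr|,
\]
and since the two correction terms are each in $[0,k]$,
\[
\Bigl|\bigl|\tau_n^k(s(t)) - s(t)\bigr| - \bigl|\sigma_n(p^*(t)) - p^*(t)\bigr|\Bigr| \leq k.
\]
Moreover, $c_p \in [p,p+k]$ and the elementary inequality $|\lfloor(n+k)t\rfloor - \lfloor nt\rfloor| \leq k+1$ give $|p^*(t) - \lfloor nt\rfloor| \leq 2k+1$ uniformly in $t$.

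The next key step is to replace $p^*(t)$ by $\lfloor nt\rfloor$ inside $|\sigma_n(\cdot) - \cdot|$. A standard observation about $321$-avoiding permutations is that \emph{every} point is either a left-to-right maximum or a right-to-left minimum (otherwise a point with a larger value to its left and a smaller value to its right would yield a $321$ pattern), so Lemma \ref{flux} applies to each index of $\sigma_n$. Combined with the $1$-Lipschitz property of the Dyck path $\gamma_n = \gamma_{\sigma_n}$ and the Petrov conditions assumed on $\sigma_n$, this gives
\[
\Bigl|\bigl|\sigma_n(p^*(t)) - p^*(t)\bigr| - \bigl|\sigma_n(\lfloor nt\rfloor) - \lfloor nt\rfloor\bigr|\Bigr| \leq 2\bigl|p^*(t) - \lfloor nt\rfloor\bigr| + 20 n^{.4} \leq 4k+2 + 20 n^{.4}.
\]
This bound is uniform over $t \in [0,1]$ and over $\tau_n^k \in Reg_n^k$.

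It remains to absorb the two different normalizing factors $\sqrt{2(n+k)}$ and $\sqrt{2n}$. Writing $A = |\tau_n^k(s(t)) - s(t)|$ and $B = |\sigma_n(\lfloor nt\rfloor) - \lfloor nt\rfloor|$,
\[
F_{\tau_n^k}(t) - F_{\ext(\tau_n^k)}(t) = \frac{A - B}{\sqrt{2(n+k)}} + B\left(\frac{1}{\sqrt{2(n+k)}} - \frac{1}{\sqrt{2n}}\right).
\]
The first term is at most $O(n^{-.1})$ by the previous paragraph (absorbing the $O(k)$ constant), and since $B \leq n$ and $\tfrac{1}{\sqrt{2n}} - \tfrac{1}{\sqrt{2(n+k)}} = O(k n^{-3/2})$, the second term is $O(k/\sqrt{n}) = O(n^{-1/2})$ for $k$ fixed. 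Both bounds are uniform in $t$ and in $\tau_n^k \in Reg_n^k$, which yields the conclusion. The only delicate point is the universal applicability of Lemma \ref{flux}; this is exactly where the $321$-avoidance of $\sigma_n$ (not just its regular projections) enters crucially, and once it is invoked, the rest of the argument is a bookkeeping exercise with the $O(k)$ offsets coming from the internal insertions.
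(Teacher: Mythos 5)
Your proposal is correct and follows essentially the same approach as the paper: track the $O(k)$ shifts in both coordinates caused by the internal insertions, pass from $\tau_n^k$ to $\sigma_n = \ext(\tau_n^k)$, invoke Lemma~\ref{flux} (valid because $\sigma_n$ satisfies the Petrov conditions) to convert displacements to Dyck path heights, and control the shift between the sampling index and $\lfloor nt\rfloor$. The paper's version uses the notation $m(t)$, $\ell(t)$ and controls $|\gamma_n(2m(t)) - \gamma_n(2\lfloor nt\rfloor)|$ via the Petrov increment condition ($< n^{.4}$ over a window of size $O(k) < 2n^{.6}$) rather than your sharper $1$-Lipschitz bound of $O(k)$, and it absorbs the two normalizing factors $\sqrt{2(n+k)}$ vs.\ $\sqrt{2n}$ more implicitly than your explicit decomposition; these are only bookkeeping differences and both routes give a uniform $O(n^{.4}/\sqrt{n}) = O(n^{-.1})$ bound.
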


\begin{proof} 
	Fix $t\in[0,1]$ and  $\tau^k_n\in Reg_n^k$. Set $\tau_n=\ext(\tau^k_n)$. When we add an internal point to a permutation, we shift the points of the permutation diagram above and/or to the right by at most one cell. So, there exist two integers $m(t)$ and $\ell(t)$ such that
	$$\tau_n^k(s(t))=\tau_n(m(t))+\ell(t), \quad\text{with}\quad |m(t)-s(t)|\leq k\text{ and }|\ell(t)|\leq k.$$ 
	Therefore
	\begin{align*}
	\left|F_{\tau^k_n}(t)-F_{\tau_n}(t)\right|&=\left|\frac{1}{\sqrt{2(n+k)}}\big |\tau^k_n(s(t)) - s(t) \big| -\frac{1}{\sqrt{2n}}\big |\tau_n(\lfloor nt \rfloor) - \lfloor nt \rfloor \big|  \right|\\
	&=\left|\frac{1}{\sqrt{2(n+k)}}\big |\tau_n(m(t))+\ell(t) - s(t) \big| -\frac{1}{\sqrt{2n}}\big |\tau_n(\lfloor nt \rfloor) - \lfloor nt \rfloor \big|  \right|\\
	&\leq \frac{1}{\sqrt{2n}}\bigg|\big |\tau_n(m(t))- m(t) \big|  -\big |\tau_n(\lfloor nt \rfloor) - \lfloor nt \rfloor \big|  \bigg|+\frac{2k}{\sqrt{2n}}.
	\end{align*}
	Let $\gamma_n$ be the Dyck path corresponding to $\tau_n$. By Lemma \ref{flux},
	\begin{equation*}
		\Big|\big |\tau_n(m(t))-m(t) \big|  -\big |\tau_n(\lfloor nt \rfloor) - \lfloor nt \rfloor \big|  \Big|\leq|\gamma_n(m(t))-\gamma_n(\lfloor nt \rfloor)|+20n^{.4}.
	\end{equation*}
	By the Petrov conditions for Dyck paths, if $|x-y| < 2n^{.6}$ then $|\gamma_n(x) - \gamma_n(y)|< n^{.4}.$ Noting that $|m(t)-\lfloor nt \rfloor|\leq|m(t)-s(t)|+|s(t)-\lfloor nt \rfloor|\leq 3k$, then we obtain that, for $n$ large enough,
	\begin{equation*}
	\Big|\big |\tau_n(m(t))-m(t) \big| -\big |\tau_n(\lfloor nt \rfloor) - \lfloor nt \rfloor \big|  \Big|\leq 25n^{.4}
	\end{equation*}
	and so
	\begin{align*}
	\left|F_{\tau^k_n}(t)-F_{\tau_n}(t)\right|\leq\frac{25n^{.4}}{\sqrt{2n}}\to 0.
	\end{align*}
	This bound is independent of $t$ and $\tau_n^k$, concluding the proof. 
\end{proof}

\begin{proof}[Proof of Theorem \ref{thm:fluctuations}]
	It is enough to show that for every continuous bounded functional \break
	$G:D([0,1],\mathbb{R})\to\R,$
	\begin{equation*}
	\E\left[G\left(F_{\bm{\tau}^k_n}(t)\right)\right]\to\E\left[G\left(\bm{e}^k_t\right)\right].
	\end{equation*}
	Note that
	\begin{multline*}
	\left|\E\left[G\left(F_{\bm{\tau}^k_n}(t)\right)\right]-\E\left[G\left(\bm{e}^k_t\right)\right]\right|\\
	\leq\E\left[\left|G\left(F_{\bm{\tau}^k_n}(t)\right)-G\left(F_{\ext(\bm{\tau}^k_n)}(t)\right)\right|\right]+\left|\E\left[G\left(F_{\ext(\bm{\tau}^k_n)}(t)\right)\right]-\E\left[G\left(\bm{e}^k_t\right)\right]\right|.
	\end{multline*}
We first show that
	\begin{equation}\label{eq:gooal1}
	\E\left[\left|G\left(F_{\bm{\tau}^k_n}(t)\right)-G\left(F_{\ext(\bm{\tau}^k_n)}(t)\right)\right|\right]\to 0.
	\end{equation}
	We have that
	\begin{equation*}
	\E\left[\left|G\left(F_{\bm{\tau}^k_n}(t)\right)-G\left(F_{\ext(\bm{\tau}^k_n)}(t)\right)\right|\right]=\sum_{\tau^k_n\in\avnk}\left|G\left(F_{\tau^k_n}(t)\right)-G\left(F_{\ext(\tau^k_n)}(t)\right)\right|\P\left(\bm{\tau}^k_n=\tau^k_n\right).
	\end{equation*}
	 The continuity of $G$ and Lemma \ref{lem:fluctuations} show that the contribution to the sum vanishes as $n\to\infty$ for $\tau_n^k\in Reg_n^k$.  Since $G$ is bounded and $\P(\bm{\tau}^k_n\notin Reg_n^k)\to 0$, we can conclude that the contribution to the sum for $\tau_n^k\notin Reg_n^k$ also vanishes as $n\to\infty$, and thus (\ref{eq:gooal1}) holds.
	
	It remains to prove that 
	\begin{equation}\label{eq:goaal2}
		\left|\E\left[G\left(F_{\ext(\bm{\tau}^k_n)}(t)\right)\right]-\E\left[G\left(\bm{e}^k_t\right)\right]\right|\to 0.
	\end{equation}
	Note that 
	\begin{equation*}
	\E\left[G\left(F_{\ext(\bm{\tau}^k_n)}(t)\right)\right]=\sum_{\tau^k_n\in\avnk}G\left(F_{\ext(\tau^k_n)}(t)\right)\cdot\P\left(\bm{\tau}^k_n=\tau^k_n\right).
	\end{equation*}
	From Theorem \ref{thin red line} we have that, uniformly for every $\tau^k_n\in\avnk$,
	\begin{equation*}
	\P\left(\bm{\tau}^k_n=\tau^k_n\right)\sim \frac{k!}{(2n)^{3k/2}}\cdot\frac{1}{c_n}\cdot\mathbb{E}\left[\left(\int_0^1\bm e_t dt\right)^k\right]^{-1},
	\end{equation*}
	and so, setting $Ar_k=\mathbb{E}\left[\left(\int_0^1\bm e_t dt\right)^k\right]^{-1}$, we obtain
	\begin{align*}
	\E\left[G\left(F_{\ext(\bm{\tau}^k_n)}(t)\right)\right]\sim& Ar_k\cdot\sum_{\tau^k_n\in\avnk}G\left(F_{\ext(\tau^k_n)}(t)\right)\cdot\frac{k!}{(2n)^{3k/2}}\cdot\frac{1}{c_n}\\
	&=Ar_k\cdot\sum_{\sigma_n\in Av_n(321)}G\left(F_{\sigma_n}(t)\right)\cdot\left|\asq(\sigma_n,k)\right|\cdot\frac{k!}{(2n)^{3k/2}}\cdot\frac{1}{c_n}.
	\end{align*}
	From Lemma \ref{city maps}, for every $\sigma_n\in Av_n(321)$ that satisfies the Petrov conditions, it holds that
	$$\left|\asq(\sigma_n,k)\right| = \frac{(2n)^{3k/2}}{k!}\left(\left(\int_0^1 F_{\sigma_n}(t)dt \right)^k + O(n^{-.1})\right).$$
	Therefore, using the asymptotic result above and recalling that the number of 321-avoiding permutations that do not satisfy the Petrov conditions is bounded by $Ce^{-n^\delta}c_n$, we obtain
	\begin{align*}
	\E\left[G\left(F_{\ext(\bm{\tau}^k_n)}(t)\right)\right]\sim& Ar_k\cdot\sum_{\sigma_n\in Av_n(321)}G\left(F_{\sigma_n}(t)\right)\cdot \left(\int_0^1 F_{\sigma_n}(t)dt \right)^k\cdot\frac{1}{c_n}\\
	&=Ar_k\cdot\E\left[G\left(F_{\bm\sigma_n}(t)\right)\cdot \left(\int_0^1 F_{\bm\sigma_n}(t)dt \right)^k\right],
	\end{align*}
	where $\bm\sigma_n$ is a uniform permutation in $Av_n(321)$. Using similar arguments to the ones used for proving the result in (\ref{final countdown}), we have that
	\begin{equation*}
	\E\left[G\left(F_{\bm\sigma_n}(t)\right)\cdot \left(\int_0^1 F_{\bm\sigma_n}(t)dt \right)^k\right]\to\E\left[G\left(\bm{e}_t\right)\cdot \left(\int_0^1 \bm{e}_tdt \right)^k\right].
	\end{equation*}
	Finally, recalling the definition of $k$-biased excursion given in Definition \ref{def:kbiasedex}, we can conclude that (\ref{eq:goaal2}) holds, finishing the proof.
\end{proof}

\section*{Acknowledgements}
The authors are very grateful to Mathilde Bouvel and Valentin F\'eray for the various discussions during the preparation of the paper.

The first author is supported by the SNF grant number 200021-172536, ``Several aspects of the study of non-uniform random permutations". The second author is supported by the ANR ``COMBINé'' number 193951. The third author is supported by ERC Starting Grant 680275 ``MALIG".

\bibliographystyle{plain}
\bibliography{pattern}

\end{document}